\newtheorem{theorem}{Theorem}[section] 
\newtheorem{proposition}[theorem]{Proposition} 
\newtheorem{corollary}[theorem]{Corollary} 
\newtheorem{lemma}[theorem]{Lemma} 
\theoremstyle{definition} 
\newtheorem{example}[theorem]{Example}
\newtheorem{remark}[theorem]{Remark} 
\newtheorem{remarks}[theorem]{Remarks}
\theoremstyle{fancyproclaim}
\renewcommand\theequation{\arabic{section}.\arabic{equation}} 
\newcommand{\CC}{{\mathbb C}} 
\newcommand{\NN}{{\mathbb N}}
\newcommand{\RR}{{\mathbb R}} 
\newcommand{\FF}{{\mathbb F}}
\newcommand{\cA}{{\mathcal A}} 
\newcommand{\cB}{{\mathcal B}} 
\newcommand{\cC}{{\mathcal C}} 
\newcommand{\cD}{{\mathcal D}}
\newcommand{\cH}{{\mathcal H}} 
\newcommand{\cI}{{\mathcal I}}
\newcommand{\cJ}{{\mathcal J}} 
\newcommand{\cK}{{\mathcal K}} 
\newcommand{\cL}{{\mathcal L}} 
\newcommand{\cM}{{\mathcal M}}
\newcommand{\cS}{{\mathcal S}} 
\newcommand{\cT}{{\mathcal T}} 
\newcommand{\cV}{{\mathcal V}} 
\newcommand{\cW}{{\mathcal W}}
\newcommand{\dom}{\operatorname{Dom}} 
\newcommand{\Ra}{\Rightarrow} 
\newcommand{\LRa}{\Leftrightarrow} 
\newcommand{\ran}{\operatorname{Ran}} 
\newcommand{\ra}{\rightarrow}
\newcommand{\tr}{\operatorname{tr}} 
\let\phi=\varphi 
\newcommand{\iac}{\mathrm{i}} 
\renewcommand{\ker}{\operatorname{Ker}} 
\newcommand{\de}{\mathrm{d}}
\newcommand{\emath}{\mathrm{e}}
\renewcommand{\Re}{\operatorname{Re}}
\newcommand{\nr}[1]{\vspace{0.1ex}\noindent\hspace*{12mm}\llap{\textup{(#1)}}} 
\begin{document} 
\title{On Propagation of Fixed Points of Quantum Operations and Beyond}\thanks{Work supported by a grant of the Romanian 
National Authority for Scientific Research, CNCS  UEFISCDI, project number
PN-II-ID-PCE-2011-3-0119.}
 
 \date{\today}
  
\author[A. Gheondea]{Aurelian Gheondea} 
\address{Department of Mathematics, Bilkent University, 06800 Bilkent, Ankara, 
Turkey, \emph{and} Institutul de Matematic\u a al Academiei Rom\^ane, C.P.\ 
1-764, 014700 Bucure\c sti, Rom\^ania} 
\email{aurelian@fen.bilkent.edu.tr \textrm{and} A.Gheondea@imar.ro} 

\begin{abstract} We show that some abstract results on propagation of fixed points for completely
positive maps on $C^*$-algebras provide a natural approach
to unify recent Noether type theorems on the equivalence of symmetries
with conservation laws for dynamical systems of Markov processes, of
quantum operations, and of quantum stochastic maps. In addition, we obtain some new 
Noether type
theorems, provide examples and counter-examples, and extend most of the existing results
with characterisations in terms of dual infinitesimal generators of the corresponding 
strongly continuous one-parameter semigroups.
\end{abstract} 

\subjclass[2010]{46L07, 47D07, 82C10, 81R15, 81P20, 81Q80, 70H33, 60J25, 60J35}
\keywords{ $C^*$-algebras, completely positive maps,
multiplicative domains, dynamical quantum systems, fixed points, symmetries of dynamical 
systems, constants of dynamical systems, dynamical stochastic systems, 
dynamical Markov systems, Noether Theorem}
\maketitle 

\section{Introduction}

In view of the celebrated theorem of E.~Noether \cite{Noether} on the equivalence of symmetries 
and conservation laws, J.C.~Baez and B.~Fong \cite{BaezFong} considered similar questions 
within the framework of "stochastic mechanics", in the sense of \cite{BaezBiamonte}, for the 
dynamics of Markov processes. Letting 
$\{U(t)\}_{t\geq 0}$ be a (classical) dynamical stochastic system (this is called a 
Markov semigroup in \cite{BaezFong}), they show that the operator of multiplication with an 
observable $O$ commutes with $U_t$ for all $t\geq 0$, an analogue for a symmetry, 
if and only if both its expected value $\langle O,U_tf\rangle$ and the expected value of its square 
$\langle O^2,U_tf\rangle$ are constant in time for every state $f$ (probability distribution), 
an analogue for a conservation law. Considering the variance 
$\langle O^2,f\rangle - \langle O,f\rangle^2$, for $f$ an arbitrary state, 
the latter is equivalent with both its expected value 
and its variance (or standard deviation) are constant in time for every state. 
The appearance of the variance makes a difference 
when compared to the classical Noether's Theorem. It is one of our aim to show
that, when viewing this from the perspective of the approach of \cite{AriasGheondeaGudder},
similar facts have been observed previously in closely related mathematical
problems on irreversible dynamical quantum systems, 
e.g.\ as in S.~Albeverio and R.~H\o egh-Krohn, 
\cite{AlbeverioHoeghKrohn}, E.B.~Davies \cite{Davies}, D.~Evans \cite{Evans},
A.~Frigerio and M.~Verri \cite{FrigerioVerri}, and E.~St\o rmer \cite{Stormer}, to quote a few.
 
More precisely, letting $\cA=\{A_n\}_{n\in\NN}$ be a sequence of positive 
operators in $\cB(\cH)$, for some Hilbert space $\cH$, such that $\sum_{n=1}^\infty A_n=I$ 
one considers the quantum operation $\Phi_\cA$, in the Schr\"odinger picture and the
L\"uders form, defined by
\begin{equation}\label{e:pac}
\Phi_\cA(T)=\sum_{n=1}^\infty A_n^{1/2}TA_n^{1/2},\quad T\in\cB_1(\cH),
\end{equation}
hence, $\Phi_\cA$ is a completely positive and trace preserving linear map on 
$\cB_1(\cH)$. 
Note that in the Heisenberg picture its dual $\Phi^\sharp_\cA$ has the same formal expression as 
in \eqref{e:pac} and that $\Phi^\sharp_\cA$ is a unital normal completely positive linear map on 
$\cB(\cH)$. The equivalence of assertions (i) and (ii) in the 
following proposition was obtained as Corollary~3.4 in \cite{AriasGheondeaGudder}, 
while the equivalence with assertions (iii) and (iv) is clear.

\begin{proposition}\label{p:fipa} Let $\Phi_\cA$ be the unital 
quantum operation in the Schr\"odinger picture as in \eqref{e:pac}, its dual $\Phi_\cA^\sharp$
in the Heisenberg picture, and let $B\in\cB(\cH)$ be a fixed point of $\Phi_\cA^\sharp$. 
The following assertions are equivalent:
\begin{itemize}
\item[(i)] $B$ commutes with all operation elements $A_1,A_2,\ldots$ of $\Phi_\cA$.
\item[(ii)] $B^*B$ and $BB^*$ are fixed points of $\Phi_\cA^\sharp$.
\item[(iii)] The whole unital $C^*$-algebra generated by $B$ is fixed by $\Phi_\cA^\sharp$.
\item[(iv)] The whole von Neumann algebra generated by $B$ is fixed by $\Phi_\cA^\sharp$.
\end{itemize}
\end{proposition}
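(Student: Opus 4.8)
The plan is to close the loop (i)$\Ra$(iv)$\Ra$(iii)$\Ra$(ii)$\Ra$(i), the three middle implications being essentially formal, so that the whole weight rests on (ii)$\Ra$(i). The single tool I would use throughout is the (minimal) Stinespring dilation of $\Phi_\cA^\sharp$. Let $\{e_n\}_{n\in\NN}$ be the standard orthonormal basis of $\ell^2$ and define $V\colon\cH\to\cH\otimes\ell^2$ by $V\xi=\sum_{n}(A_n^{1/2}\xi)\otimes e_n$. Since $\sum_n A_n=I$ one has $V^*V=\sum_n A_n=I$, so $V$ is an isometry, $VV^*$ is the orthogonal projection onto $\ran V$, and a direct computation gives $\Phi_\cA^\sharp(T)=V^*(T\otimes\iel)V$ for every $T\in\cB(\cH)$, where $\pi(T):=T\otimes\iel$ is a normal $*$-representation dilating $\Phi_\cA^\sharp$.

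For (i)$\Ra$(iv) I would first observe that if $B$ commutes with each $A_n$, then by continuous functional calculus $B$ commutes with each $A_n^{1/2}$, and taking adjoints (the $A_n$ being self-adjoint) $B^*$ does as well; hence both $B$ and $B^*$ lie in the von Neumann algebra $\{A_n:n\in\NN\}'$. It therefore suffices to check that every $C\in\{A_n:n\in\NN\}'$ is fixed, which is immediate: $\Phi_\cA^\sharp(C)=\sum_n A_n^{1/2}CA_n^{1/2}=\sum_n A_nC=C$, the series converging in the strong operator topology. This gives that the whole von Neumann algebra generated by $B$ is fixed, which is (iv); then (iv)$\Ra$(iii) and (iii)$\Ra$(ii) are trivial, since $B^*B$ and $BB^*$ belong to the $C^*$-algebra generated by $B$.

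The crux is (ii)$\Ra$(i), where the equality version of the Kadison--Schwarz inequality is exactly what is needed. Since $B=\Phi_\cA^\sharp(B)$ and $B^*B=\Phi_\cA^\sharp(B^*B)$ by hypothesis, writing $X:=\pi(B)=B\otimes\iel$ and using $\pi(B^*B)=X^*X$ I would compute
\begin{equation*}
\Phi_\cA^\sharp(B^*B)-\Phi_\cA^\sharp(B)^*\Phi_\cA^\sharp(B)=V^*X^*(I-VV^*)XV,
\end{equation*}
which vanishes by (ii). As $I-VV^*$ is an orthogonal projection, this positive operator equals $\bigl((I-VV^*)XV\bigr)^*(I-VV^*)XV$, so its vanishing forces $(I-VV^*)XV=0$, that is $(B\otimes\iel)V=VV^*(B\otimes\iel)V=V\Phi_\cA^\sharp(B)=VB$. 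Reading off the coefficients in the expansions $(B\otimes\iel)V\xi=\sum_m (BA_m^{1/2}\xi)\otimes e_m$ and $VB\xi=\sum_m (A_m^{1/2}B\xi)\otimes e_m$ then yields $BA_n^{1/2}=A_n^{1/2}B$ for every $n$, hence $BA_n=A_nB$, which is (i).

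I expect the only genuinely delicate point to be this equality analysis: one must justify passing from vanishing of the positive operator $V^*X^*(I-VV^*)XV$ to $(I-VV^*)XV=0$ (through $\langle(I-VV^*)XV\xi,(I-VV^*)XV\xi\rangle=0$ for all $\xi$, using $(I-VV^*)=(I-VV^*)^2$), and that the componentwise identification in the orthonormal basis $\{e_m\}$ is legitimate in the strong operator topology. It is worth noting that only the hypothesis on $B^*B$ is actually used in (ii)$\Ra$(i); the hypothesis on $BB^*$ is the adjoint statement and yields the same commutation relations for $B^*$, consistent with $\{A_n:n\in\NN\}'$ being $*$-closed.
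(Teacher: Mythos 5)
Your proof is correct. Note first that the paper itself offers no argument for this proposition: it cites Corollary~3.4 of \cite{AriasGheondeaGudder} for (i)$\Leftrightarrow$(ii) and declares the equivalence with (iii) and (iv) clear, so you are supplying a genuine proof rather than reproducing one. Your route is consonant with the machinery the paper does develop --- the Appendix proves Choi's theorem by exhibiting $\Phi(a^*a)-\Phi(a)^*\Phi(a)=V^*\pi(a)^*(I-VV^*)\pi(a)V$ for a Stinespring triple and exploiting that $I-VV^*$ is positive --- but you specialize to the \emph{concrete} dilation $V\xi=\sum_n(A_n^{1/2}\xi)\otimes e_n$, $\pi(T)=T\otimes\iel$, and this is exactly what the abstract statements (Theorem~\ref{t:choi}, Theorem~\ref{t:doi}) cannot deliver on their own: they yield only the bimodule property $\Phi_\cA^\sharp(SB)=\Phi_\cA^\sharp(S)B$, whereas reading off $(B\otimes\iel)V=VB$ componentwise in the basis $\{e_m\}$ gives commutation with each individual operation element $A_n^{1/2}$, which is assertion (i). All the individual steps check out: $V^*V=\sum_nA_n=I$, the reduction of $V^*X^*(I-VV^*)XV=0$ to $(I-VV^*)XV=0$ via the projection property, the componentwise identification in $\cH\otimes\ell^2$, and the strong convergence of $\sum_nA_nC$ in (i)$\Ra$(iv). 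Your closing observation is also correct and worth emphasizing: because the operation elements $A_n^{1/2}$ are selfadjoint, the single hypothesis that $B$ and $B^*B$ are fixed already forces $BA_n^{1/2}=A_n^{1/2}B$, hence full two-sided commutation; so (ii) as stated is slightly redundant for this particular (L\"uders) form of $\Phi_\cA$, in contrast with the general situation of Theorem~\ref{t:doi} where both $a^*a$ and $aa^*$ are needed.
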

Note that Proposition~\ref{p:fipa} implies that, a selfadjoint 
operator $B\in\cB(\cH)$ commutes with all operation elements of $\cA$ if and only if both 
$B$ and $B^2$ are fixed points of $\Phi^\sharp_\cA$, hence
a characterisation of exactly the same type with that obtained in 
the Noether type theorem of  \cite{BaezFong}.
There are important differences between these two results, notably the latter condition on
the square of the observable is necessary even in the finite state space case for the classical 
Markov processes, cf.\ the example at page 3 in \cite{BaezFong}, while for 
the L\"uders operation it is not, cf.\ Theorem~3.5 in \cite{AriasGheondeaGudder}. 

The result in \cite{BaezFong} has been put in a setting of dynamical quantum 
systems by J.E.~Gough, T.S.~Ra\c tiu, and O.G.~Smolyanov in \cite{GoughRatiuSmolyanov}. 
More precisely, let $\cT=\{\cT_t\}_{t\geq 0}$ denote a dynamical system in the 
Schr\"odinger picture, that is, a norm
continuous semigroup of completely positive trace-preserving linear maps on the trace-class 
$\cB_1(\cH)$ for some fixed Hilbert space $\cH$, for which the infinitesimal generator $M$ 
takes the form, cf.\ \cite{Lindblad}, \cite{GoriniKossakowskiSudarshan},
\begin{equation}\label{e:ig}
M(S)=\sum_{k}(L_kSL_k^*-\frac{1}{2} SL_k^*L_k-\frac{1}{2}L_k^*L_kS)+\iac[S,H],\quad 
S\in\cB_1(\cH),
\end{equation}
for a collection of operators $L_k\in\cB(\cH)$, $k=1,2,\dots$, and a selfadjoint operator 
$H\in\cB(\cH)$. The \emph{constants} of  $\cT$ are the operators 
$A\in\cB(\cH)$ such that $\tr((\cT_t\rho)A)=\tr(\rho A)$ for all density operators 
$\rho\in\cD(\cH)$ and all $t\geq 0$. Transferring to the Heisenberg picture, one 
considers the dual semigroup $\{\cJ_t\}_{t\geq 0}$ acting in $\cB(\cH)$ whose set of fixed points,
that is, all $A\in\cB(\cH)$ such that $\cJ_t(A)=A$ for all $t\geq 0$, 
coincides with the set of constants of $\cT$. The main result 
in \cite{GoughRatiuSmolyanov} says that, under the technical assumption of existence of 
a stationary strictly positive density operator, the set of
constants of the quantum dynamical system
$\{\cT_t\}_{t\geq 0}$, which coincides with the set of fixed points of $\{\cJ_t\}_{t\geq 0}$, is a 
von Neumann algebra and it coincides with the commutant 
$\{H,L_k,L_k^*\mid k=1,2,\ldots\}^\prime$. In their formulation, an analogue of the second condition
on the square of the observable as in \cite{BaezFong} does not show up and, another aim of our
article is to show that this happens because it is obscured by the technical assumption of 
existence of a stationary strictly positive density operator.

Within the same circle of ideas as in \cite{BaezFong} and \cite{GoughRatiuSmolyanov}, 
K.~Bartoszek and W.~Bartoszek \cite{Bartoszek2} recently considered a noncommutative version 
of dynamical stochastic system, more precisely, a strongly 
continuous semigroup $\{S_t\}_{t\geq 0}$ of stochastic maps with respect to some Hilbert space 
$\cH$, that is, 
trace-preserving positive linear maps on the trace-class $\cB_1(\cH)$, and an one-element 
measurement operator $M_{A^{1/2}}$, for some positive operator $A\in\cB(\cH)$, 
where $M_{A^{1/2}}(T)=A^{1/2}TA^{1/2}$. In this setting, they obtain several equivalent
characterisations to the compatibility
(commutation) of the dynamical stochastic system  $\{S_t\}_{t\geq 0}$ with the quantum 
measurement $M_{A^{1/2}}$: for example, one of these equivalent characterisations
refers to $A$ and $A^2$ being fixed by
the dual semigroup $\{S^\sharp_t\}_{t\geq 0}$ and a second one refers to the commutation of
the infinitesimal generator $\mathfrak{s}$ of $\{S_t\}_{t\geq 0}$ with $M_{A^{1/2}}$. The approach
used in \cite{Bartoszek2} combines the probability theory methods as in \cite{BaezFong} with
operator theoretical methods. There are some important questions left unanswered in 
\cite{Bartoszek2}: for example, to which extent
the additional condition of $A^2$ to be 
fixed by the dual semigroup $\{S^\sharp_t\}_{t\geq 0}$ as well is really necessary? It is another aim 
of our article to provide an answer to this question.

In this article we show that the $C^*$-algebraic dilation theoretical approach as in 
\cite{AriasGheondeaGudder} unifies all the results in \cite{BaezFong}, 
\cite{GoughRatiuSmolyanov}, and \cite{Bartoszek2} under a common framework of propagation
of fixed points for completely positive maps. 
In addition, for each of the noncommutative Noether type theorems considered in 
\cite{GoughRatiuSmolyanov} and \cite{Bartoszek2}, we provide examples
and counter-examples that clarify the necessity of the second order extra conditions imposed,
obtain some new Noether type theorems, and extend most of the existing results
with characterisations in terms of dual infinitesimal generators of strongly continuous 
one-parameter semigroups. Actually, we show that the abstract results obtained in theorems 
\ref{t:unu} and \ref{t:doi} short-cuts completely the probabilistic tools in the proofs of the 
main results in \cite{BaezFong} 
and \cite{Bartoszek2}, while in the case of \cite{GoughRatiuSmolyanov} they reveal
what happens in case the technical assumption of existence of 
a stationary strictly positive density operator is removed.

We briefly describe the contents of this article. Section~\ref{s:pr} contains most of the 
technical results that are needed in this article.
Firstly, we obtain Theorem~\ref{t:unu}
that shows that for a unital linear map $\Phi$ on a $C^*$-algebra $\cA$ 
that is completely positive when restricted to the unital $C^*$-algebra generated by an 
element $a\in\cA$, the fixation of $a$, $a^*a$, and $aa^*$ propagates to the whole unital $C^*$-subalgebra generated by $a$. 
Since, according to a classical theorem of Stinespring \cite{Stinespring}, 
positivity on commutative $C^*$-algebras implies complete positivity, if $a$ is a normal 
element the same conclusions as in Theorem~\ref{t:unu} can be obtained for positive unital maps, 
as in  Corollary~\ref{c:unu}. These results are obtained through a classical result on multiplicative 
domains of M.-D.~Choi \cite{Choi}.
Subsection~\ref{ss:fppmcsa}
provides an equivalent 
characterisation of the set of fixed points for a $w^*$-continuous semigroup of
$w^*$-continuous operators in terms of the null space of its 
$w^*$-infinitesimal generator, which is used in all three cases considered in 
sections~\ref{s:dssm},~\ref{s:dmp}, and \ref{s:cdqs}, in order to obtain characterisations in terms
of the infinitesimal generators of the dual (Markov) semigroups. We think that Theorem~\ref{t:dual}
is most likely known but we could not find a reference for it. 
Subsection~\ref{ss:etvna}
provides a semigroup version of Theorem~2.4 in 
\cite{AriasGheondeaGudder}, more precisely, letting
$\mathbf{\Phi}=\{\Phi_t\}_{t\geq 0}$ be a 
$w^*$-continuous semigroup of $w^*$-continuous, unital, and completely positive maps on a von
Neumann algebra $\cM$, by an ergodic theoretical approach we show that
the set of joint fixed points $\cM^{\mathbf{\Phi}}$ is the range of a completely
positive, unital, and idempotent map $\Psi\colon\cM\ra\cM$. This fact is the technical tool 
to be used, in conjunction with some classical results on injective von Neumann algebras, 
in clarifying the question whether the additional condition on $A^*A$ and $AA^*$ 
in Theorem~\ref{t:dqsc} is necessary, for infinite dimensional Hilbert spaces, 
by adapting the counter-example from \cite{AriasGheondeaGudder} to the semigroup setting.

In Section~\ref{s:dssm} we provide different proofs for the main results of \cite{Bartoszek2}, 
on compatibility of
one-element quantum measurements with stochastic maps in both the discrete and continuous 
dynamical systems cases, more precisely, 
we show that these results can be obtained directly from Corollary~\ref{c:unu}. 
Example~\ref{ex:stochastic} shows, by means of the 
transpose map with respect to a fixed orthonormal basis, that the set of stochastic maps
that are not quantum operations is quite large.
In the case of a continuous dynamical stochastic system, we additionally find two more equivalent characterisations of the 
compatibility of one-parameter semigroups of stochastic maps 
$\{\Psi_t\}_{t\geq 0}$ with an one-element quantum measurement operator $M_{A^{1/2}}$
in terms of the dual infinitesimal generator $\psi^\sharp$: one by the commutation of $M_{A^{1/2}}$ 
with $\psi^\sharp$ and the second by the fact that $\psi^\sharp$ annihilates both $A$ and $A^2$.

In Section~\ref{s:dmp} we consider the setting of dynamical systems of 
classical Markov processes as in  
\cite{BaezFong} and show how the Noether type theorems obtained in that paper
can be naturally recovered under our approach. In the discrete semigroup case, we point out 
additional equivalent characterisations through the dual semigroup while, in the case of a strongly 
continuous semigroup, we obtain additional equivalent characterisations in terms of 
infinitesimal generators, dual semigroups and dual infinitesimal generators. 

In Section~\ref{s:cdqs} we consider a slightly more general setting of 
dynamical quantum systems, when compared to that used in 
\cite{GoughRatiuSmolyanov}, by replacing the operator
norm continuity of the one-parameter semigroup 
with strong continuity, and reorganise most of it in a rather different fashion and 
obtain new results. Firstly, we consider discrete quantum semigroups 
for which we obtain Noether type theorems with 
respect to left and right multiplication by arbitrary bounded operators. Note that, due to the fact
that dynamical quantum systems consist of completely positive maps only, these results go 
beyond multiplication operators with normal operators, a restriction that seems difficult to overcome 
in the case of dynamical stochastic systems as in \cite{Bartoszek2}.
Then, we point out a scale of sets of
constants $\cC^\Psi\supseteq\cC^\Psi_2\supseteq\cC^\Psi_{\mathrm{p}}
\supseteq\cC^\Psi_{\mathrm{c}} \supseteq\cC^\Psi_{\mathrm{w}}$, for $\Psi$ 
a dynamical quantum system (either discrete or continuous), 
and discuss their relation: we show that all these sets but  $\cC^\Psi$ 
coincide and they make a von Neumann algebra, while the first order set of constants $\cC^\Psi$ 
is the largest one and only under special conditions, as the 
existence of a stationary faithful state, coincides with the other sets of constants, 
equivalently, is a von Neumann algebra. For strongly continuous
quantum semigroups, additional characterisations in terms of the infinitesimal generators and 
dual infinitesimal generators are obtained. In Theorem~\ref{t:dqscc} we
show that, in any infinite dimensional and separable Hilbert space, there
exists norm continuous quantum semigroups for which the set of constants is not a von Neumann 
algebra, equivalently, it is not stable under multiplication. This result clarifies also the question
why the extra condition on $A^2$ to be a fixed point is necessary, in general,
in the infinite dimensional noncommutative (quantum) case.

In Appendix we provide a modern proof of Theorem~\ref{t:choi}  as a consequence of the Stinespring's 
Dilation Theorem, following \cite{BrownOzawa}, that shows that the $C^*$-algebraic 
abstract results we rely upon have a dilation theoretical character. 

A few words about terminology. We have used the same names "stochastic" and, respectively, 
"Markov" for both the commutative (classical) case as in Section~\ref{s:dmp}, 
and the noncommutative (quantum) case as in Section~\ref{s:dssm},
hoping that there will be no danger of confusion. This way, we left the notions of quantum
stochastic and, respectively, quantum Markov referring to the case of quantum operations in the 
Schr\"odinger picture and, respectively, in the Heisenberg picture, following the terminology already 
established in quantum physics, see \cite{FagnolaRebolledo} and \cite{GoughRatiuSmolyanov}.

We thank Marius D\u ad\^arlat for drawing our attention to the proof of M.-D.~Choi's Theorem 
in \cite{BrownOzawa} obtainable solely from the Stinespring's Dilation Theorem 
and for many other useful discussions on these topics, to Radu Purice for clarifying some
aspects from \cite{GoughRatiuSmolyanov}, and to Carlo Beenakker for indicating 
\cite{ChruscinskiKossakowski} and \cite{Horodecki4}
as sources on the significance of the transpose map in quantum information theory.

\section{Preliminary Results}\label{s:pr}

\subsection{Propagation of Fixed Points in $C^*$-Algebras}\label{ss:fppmcsa}

Let $\cA$ and $\cB$ be $C^*$-algebras with unit. A linear map $\Phi\colon\cA\ra\cB$
is \emph{positive} if $\Phi(a)\geq 0$ for all $a\in\cA^+$, where $\cA^+=\{x^*x\mid x\in\cA\}$ 
denotes the cone of positive elements in $\cA$. Any positive map is selfadjoint, in the sense
that $\Phi(a^*)=\Phi(a)^*$ for all $a\in\cA$, and bounded, more precisely, 
according to the Russo--Dye Theorem, $\|\Phi\|=\|\Phi(e)\|$, where by $e$ we
denote the unit of $\cA$.

Given an arbitrary natural number $n$, we consider
the $C^*$-algebra $M_n(\cA)$ of all $n\times n$ matrices with entries in $\cA$, organized as a 
$C^*$-algebra in a canonical way, e.g.\ by identifying it with the $C^*$-algebra $\cA\otimes M_n$. 
This gives rise to the $n$-th order \emph{amplification map} 
$\Phi_n\colon M_n(\cA)\ra M_n(\cB)$ defined by
\begin{equation} \Phi_n(A)=\left[ \Phi(a_{i,j})\right]_{i,j=1}^n,\quad A=[a_{i,j}]_{i,j=1}^n\in M_n(\cA).
\end{equation} 
$\Phi$ is called \emph{$n$-positive} if $\Phi_n$ is positive. $\Phi$ is called 
\emph{completely positive} if it is $n$-positive for all $n\in\NN$.

Given $\cA$ a $C^*$-algebra with unit, a closed linear subspace $\cS$ of $\cA$ is called an
\emph{operator system} if it is stable under the adjoint operation $a\mapsto a^*$ and contains
the unit of $\cA$. Note that any operator system is linearly generated by the cone of all its
positive elements. Also, for any linear map $\Psi\colon \cS\ra\cB$, for $\cB$ an arbitrary 
$C^*$-algebra, the definitions of positive map, $n$-positive map, and completely positive map,
as defined before, make perfectly sense. More generally, these definitions make sense if $\cS$
is assumed to be stable under the adjoint operation only.

For an arbitrary linear map $\Phi\colon\cA\ra\cB$, the set
\begin{equation}\label{e:md}
\cM_\Phi=\{a\in\cA\mid \Phi(a^*a)=\Phi(a)^*\Phi(a)\mbox{ and }\Phi(aa^*)=\Phi(a)\Phi(a^*)\}
\end{equation}
is called the \emph{multiplicative domain} of $\Phi$. If $\Phi$ is unital then $\cM_\Phi$
contains the unit of $\cA$.

We start with the following theorem, due to M.-D.~Choi \cite{Choi}; it is worth observing that 
assertion (2) is actually a property of propagation of multiplicativity which motivates the name of 
$\cM_\Phi$. The Schwarz Inequality was first obtained in a special case 
by R.V.~Kadison in \cite{Kadison}, that's why sometimes it is called the Kadison--Schwarz 
Inequality. Following \cite{BrownOzawa},
we provide a short and modern proof of this theorem in Appendix, 
which also points out its dilation 
theory substance, as a consequence of the Stinespring's Dilation Theorem \cite{Stinespring}.

\begin{theorem} \label{t:choi}
Let $\Phi\colon\cA\ra\cB$ be a contractive completely positive map. Then:

\nr{1} \emph{(The Schwarz Inequality)} $\Phi(a)^*\Phi(a)\leq \Phi(a^*a)$ for all $a\in\cA$.

\nr{2} \emph{(The Multiplicativity Property)} Let $a\in\cA$. Then: 
\begin{itemize}
\item[(i)] $\Phi(a^*a)=\Phi(a)^*\Phi(a)$ if and only
if $\Phi(ba)=\Phi(b)\Phi(a)$ for all $b\in\cA$.
\item[(ii)] $\Phi(aa^*)=\Phi(a)\Phi(a)^*$ if and only if $\Phi(ab)=\Phi(a)\Phi(b)$ for all $b\in\cA$.
\end{itemize}

Consequently,
\begin{equation}\label{e:mda}
\cM_\Phi=\{a\in\cA\mid \Phi(ab)=\Phi(a)\Phi(b),\ \Phi(ba)=\Phi(b)\Phi(a),\mbox{ for all }b\in\cA\}.
\end{equation}

\nr{3} The multiplicative domain $\cM_\Phi$ defined at \eqref{e:md} is a $C^*$-subalgebra of $\cA$
and it coincides with the largest $C^*$-subalgebra $\cC$ of $\cA$ such that 
$\Phi|_\cC\colon\cC\ra \cB$ is a $*$-homomorphism.
\end{theorem}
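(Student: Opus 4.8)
The plan is to prove Theorem~\ref{t:choi} in three stages, assuming $\Phi\colon\cA\ra\cB$ is a contractive completely positive map.

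\textbf{Step 1: The Schwarz Inequality.}
First I would establish assertion (1). The cleanest route is to consider the $2\times 2$ matrix
\begin{equation*}
X=\begin{bmatrix} e & a \\ 0 & 0 \end{bmatrix}\in M_2(\cA),
\end{equation*}
so that
\begin{equation*}
X^*X=\begin{bmatrix} e & a \\ a^* & a^*a \end{bmatrix}\geq 0
\end{equation*}
in $M_2(\cA)$. Applying the $2$-positive amplification $\Phi_2$, which is positive since $\Phi$ is completely positive, I would get
\begin{equation*}
\begin{bmatrix} \Phi(e) & \Phi(a) \\ \Phi(a)^* & \Phi(a^*a) \end{bmatrix}\geq 0.
\end{equation*}
Here I use $\Phi(a^*)=\Phi(a)^*$, valid because positive maps are selfadjoint. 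Since $\Phi$ is contractive, the Russo--Dye Theorem gives $\Phi(e)\leq e$, and combined with positivity of the $2\times 2$ block, a standard Schur-complement argument yields $\Phi(a)^*\Phi(a)\leq\Phi(a^*a)$, which is (1).

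\textbf{Step 2: The Multiplicativity Property.}
For assertion (2)(i), the forward implication is the substantive one. Assuming $\Phi(a^*a)=\Phi(a)^*\Phi(a)$, so that equality holds in the Schwarz inequality at $a$, I would exploit the positivity of the $2\times 2$ block above by testing it against vectors built from an arbitrary $b\in\cA$; concretely, the idea is that equality in a Schwarz-type inequality forces the off-diagonal "error term'' $\Phi(ba)-\Phi(b)\Phi(a)$ to vanish. The computational heart is a Cauchy--Schwarz estimate of the form
\begin{equation*}
\bigl(\Phi(ba)-\Phi(b)\Phi(a)\bigr)^*\bigl(\Phi(ba)-\Phi(b)\Phi(a)\bigr)\leq\|b\|^2\,\bigl(\Phi(a^*a)-\Phi(a)^*\Phi(a)\bigr),
\end{equation*}
whose right-hand side is zero by hypothesis. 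Establishing this estimate rigorously is what I expect to be the main obstacle, since it requires applying complete positivity to a carefully chosen positive matrix encoding both $a$ and $b$; once it is in place the conclusion $\Phi(ba)=\Phi(b)\Phi(a)$ is immediate. The converse is routine: setting $b=a^*$ in $\Phi(ba)=\Phi(b)\Phi(a)$ gives $\Phi(a^*a)=\Phi(a^*)\Phi(a)=\Phi(a)^*\Phi(a)$. Assertion (2)(ii) follows by the symmetric argument (or by applying (2)(i) to the adjoint-related data), and the description \eqref{e:mda} of $\cM_\Phi$ is then a direct consequence of combining (i) and (ii).

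\textbf{Step 3: $\cM_\Phi$ is a $C^*$-subalgebra and the maximality statement.}
Using the characterisation \eqref{e:mda}, I would verify the algebraic closure properties of $\cM_\Phi$. Stability under adjoints is clear from the symmetric form of the defining conditions; linearity is immediate; the key point is closure under multiplication, which follows by iterating the multiplicativity identities: if $a_1,a_2\in\cM_\Phi$ then for every $b$ one checks $\Phi((a_1a_2)b)=\Phi(a_1)\Phi(a_2)\Phi(b)=\Phi(a_1a_2)\Phi(b)$ and similarly on the other side, so $a_1a_2\in\cM_\Phi$. Norm-closedness follows from continuity of $\Phi$ (boundedness via Russo--Dye) and of multiplication. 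On $\cM_\Phi$ the map $\Phi$ is multiplicative and selfadjoint, hence a $*$-homomorphism. Finally, for maximality, if $\cC$ is any $C^*$-subalgebra on which $\Phi$ restricts to a $*$-homomorphism, then every $c\in\cC$ satisfies $\Phi(c^*c)=\Phi(c)^*\Phi(c)$ and $\Phi(cc^*)=\Phi(c)\Phi(c)^*$, so $\cC\subseteq\cM_\Phi$; this shows $\cM_\Phi$ is the largest such subalgebra and completes the proof.
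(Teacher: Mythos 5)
Your Steps 1 and 3 are correct. The $2\times 2$ matrix argument for the Schwarz inequality (applying $\Phi_2$ to $X^*X$ with $X=\left[\begin{smallmatrix} e & a \\ 0 & 0\end{smallmatrix}\right]$, then using $\Phi(e)\leq e$ and a Schur-complement estimate) is a valid and genuinely different route from the paper, which derives the entire theorem from the Stinespring dilation $\Phi(\cdot)=V^*\pi(\cdot)V$ with $\|V\|\leq 1$. Your verification in Step 3 that $\cM_\Phi$ is a $C^*$-subalgebra, working from the characterisation \eqref{e:mda}, is also fine and in fact slightly cleaner than the paper's direct expansion of $\Phi\bigl((a+b)^*(a+b)\bigr)$.

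The genuine gap is in Step 2, which is the heart of the theorem. The substantive implication $\Phi(a^*a)=\Phi(a)^*\Phi(a)\Ra\Phi(ba)=\Phi(b)\Phi(a)$ is reduced by you to the inequality
\begin{equation*}
\bigl(\Phi(ba)-\Phi(b)\Phi(a)\bigr)^*\bigl(\Phi(ba)-\Phi(b)\Phi(a)\bigr)\leq\|b\|^2\,\bigl(\Phi(a^*a)-\Phi(a)^*\Phi(a)\bigr),
\end{equation*}
which you then explicitly defer as ``the main obstacle'' without exhibiting the ``carefully chosen positive matrix encoding both $a$ and $b$'' that would prove it. The inequality is true, but it does not follow from the single $2\times 2$ matrix used in Step 1; as the paper itself remarks after the statement of the theorem, the Schwarz inequality needs only $2$-positivity while the Multiplicativity Property requires more, so a new positivity input is genuinely required here and is missing from your write-up. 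This is precisely where the paper's dilation approach earns its keep: from $\Phi(ba)-\Phi(b)\Phi(a)=V^*\pi(b)(I_\cK-VV^*)^{1/2}\,(I_\cK-VV^*)^{1/2}\pi(a)V$ and $\Phi(a^*a)-\Phi(a)^*\Phi(a)=\bigl((I_\cK-VV^*)^{1/2}\pi(a)V\bigr)^*\bigl((I_\cK-VV^*)^{1/2}\pi(a)V\bigr)$, the hypothesis forces $(I_\cK-VV^*)^{1/2}\pi(a)V=0$ and the conclusion is immediate (your displayed inequality also drops out, with constant $\|(I_\cK-VV^*)^{1/2}\pi(b)^*V\|^2\leq\|b\|^2$). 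As it stands, your proposal asserts the key lemma of part (2) rather than proving it, so the proof is incomplete at its central step; everything downstream (the description \eqref{e:mda} and Step 3) depends on it.
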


Actually, the Schwarz Inequality is true under the more general condition that $\Phi$ is $2$-positive, 
while the Multiplicativity 
Property holds for  $4$-positive maps: see also \cite{Paulsen}.

We are interested in fixed points of positive maps between $C^*$-algebras. Given a $C^*$-algebra 
$\cA$ with unit $e$, 
let $\Phi\colon\cA\ra\cA$ be a linear map that is unital and positive. We consider the set of the \emph{fixed points} of $\Phi$
\begin{equation}\label{e:afi}
\cA^\Phi=\{a\in\cA\mid \Phi(a)=a\},
\end{equation}
of all fixed points of $\Phi$ and it is easy to see that $\cA^\Phi$ is an operator system. 
Another set of interest is the \emph{bimodule domain}
\begin{equation}\label{e:iafi}
\cI(\Phi)=\{a\in\cA\mid \Phi(ab)=a\Phi(b),\ \Phi(ba)=\Phi(b)a,\mbox{ for all }b\in\cA\},
\end{equation}
which is a $C^*$-subalgebra of $\cA$ containing the unit $e$. Clearly,
\begin{equation}\cI(\Phi)\subseteq\cA^\Phi\cap\cM_\Phi.\end{equation}
On the other hand, if $\Phi$ is completely positive and contractive, 
by Theorem~\ref{t:choi}.(2) we have 
\begin{equation}\cA^\Phi\cap\cM_\Phi=\cI_\Phi.\end{equation}

As shown in \cite{AriasGheondeaGudder}, even for the very particular case of a L\"uders operation 
$\Phi$ on $\cB(\cH)$, where $\cB(\cH)$ denotes 
the von Neumann algebra of all bounded operators on a Hilbert space $\cH$, 
in general we cannot expect that the set of fixed points of $\Phi$ coincides with its bimodule 
domain. In the following we consider a related question: given a unital positive map 
$\Phi\colon\cA\ra\cA$, we want to see whether the quality of an element $a\in\cA$
of being fixed by $\Phi$ 
propagates to the whole $C^*$-algebra $C^*(e,a)$.
In view of Proposition~\ref{p:fipa}, it is not a surprise that
this question is related to the concept of multiplicative domain, that is, imposing 
$a^*a,aa^*\in\cA^\Phi$ and a certain  "locally complete positivity" condition on $\Phi$ as well.

\begin{theorem}\label{t:unu}
Let $\cA$ be a $C^*$-algebra with unit $e$, let 
$\Phi\colon\cA\ra\cA$ be a unital linear map, and let $a\in\cA$ be such that 
$\Phi|_{C^*(e,a)}\colon C^*(e,a)\ra\cA$ is completely positive. 
The following assertions are equivalent:
\begin{itemize}
\item[(i)] $a,a^*a,aa^*\in\cA^\Phi$, that is, $\Phi(a)=a$, $\Phi(a^*a)=a^*a$, and $\Phi(aa^*)=aa^*$.
\item[(ii)] $a\in\cA^\Phi\cap\cM_\Phi$, that is, $\Phi(a)=a$, $\Phi(a^*a)=\Phi(a)^*\Phi(a)$, and 
$\Phi(aa^*)=\Phi(a)\Phi(a)^*$.
\item[(iii)] $\Phi|_{C^*(e,a)}$ 
has the Bimodule Property, that is, $\Phi(ba)=\Phi(b)a$ and
$\Phi(ab)=a\Phi(b)$ for all $b\in C^*(e,a)$.
\item[(iv)] $C^*(e,a)\subseteq\cA^\Phi$, that is, $\Phi(b)=b$ for all $b\in C^*(e,a)$.
\end{itemize}
\end{theorem}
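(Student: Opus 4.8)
The plan is to run the cycle (i) $\Ra$ (ii) $\Ra$ (iii) $\Ra$ (iv) $\Ra$ (i), noting in advance that the only implications carrying real content are (ii) $\Ra$ (iii), where Choi's theorem enters, and (iii) $\Ra$ (iv), which is a propagation/density argument. The decisive first move is to localise everything to $\cC:=C^*(e,a)$. Since $\Phi$ is unital and $e\in\cC$, the restriction $\Psi:=\Phi|_\cC$ is a unital completely positive map $\cC\ra\cA$, and by the Russo--Dye Theorem a unital positive map is contractive, so $\|\Psi\|=\|\Psi(e)\|=1$; hence $\Psi$ meets the hypotheses of Theorem~\ref{t:choi}. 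Being positive, $\Psi$ is also selfadjoint, $\Psi(b^*)=\Psi(b)^*$ for $b\in\cC$. The equivalence (i) $\LRa$ (ii) is then immediate: under the common assumption $\Phi(a)=a$, the identities $\Phi(a^*a)=a^*a$, $\Phi(aa^*)=aa^*$ of (i) read exactly as $\Phi(a^*a)=\Phi(a)^*\Phi(a)$, $\Phi(aa^*)=\Phi(a)\Phi(a)^*$ of (ii). Likewise (iv) $\Ra$ (i) is trivial, since $a,a^*a,aa^*\in C^*(e,a)$.

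For (ii) $\Ra$ (iii), I read (ii) as $a\in\cA^\Phi\cap\cM_\Phi$. Because $a,a^*a,aa^*\in\cC$, the multiplicative-domain identities for $\Phi$ coincide with those for $\Psi$, so $a$ lies in the multiplicative domain $\cM_\Psi$ of $\Psi$ and $\Psi(a)=a$. Applying the Multiplicativity Property of Theorem~\ref{t:choi}.(2) to the contractive completely positive map $\Psi$, the identity $\Psi(a^*a)=\Psi(a)^*\Psi(a)$ gives $\Psi(ba)=\Psi(b)\Psi(a)$ for all $b\in\cC$, and $\Psi(aa^*)=\Psi(a)\Psi(a)^*$ gives $\Psi(ab)=\Psi(a)\Psi(b)$ for all $b\in\cC$. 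Substituting $\Psi(a)=a$ yields $\Phi(ba)=\Phi(b)a$ and $\Phi(ab)=a\Phi(b)$ for every $b\in\cC$, which is precisely the Bimodule Property (iii).

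The implication (iii) $\Ra$ (iv) is the propagation step. First I extend the Bimodule Property from $a$ to $a^*$: taking adjoints in $\Phi(ab)=a\Phi(b)$ and using selfadjointness of $\Psi$ gives $\Phi(ca^*)=\Phi(c)a^*$ for all $c\in\cC$ (and symmetrically $\Phi(a^*c)=a^*\Phi(c)$). Now set $\cF:=\{b\in\cC\mid\Phi(b)=b\}=\ker(\Psi-\mathrm{id}_\cC)$, which, as $\Psi$ is bounded, is a closed linear subspace of $\cC$, and it contains $e$ because $\Phi$ is unital. If $b\in\cF$, then $\Phi(ba)=\Phi(b)a=ba$ and $\Phi(ba^*)=\Phi(b)a^*=ba^*$, so $ba,ba^*\in\cF$; starting from $e$ and inducting on word length shows that every product $x_1\cdots x_n$ with each $x_i\in\{a,a^*\}$ lies in $\cF$. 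By linearity and closedness of $\cF$, the closed span of such words, namely $C^*(e,a)=\cC$, is contained in $\cF$, which is (iv); this closes the cycle.

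I expect the only genuinely delicate point to be the recognition that, although $\Phi$ need not be positive on all of $\cA$, the whole argument can and must be carried out inside $\cC=C^*(e,a)$, where the hypothesis supplies complete positivity and hence the full strength of Choi's Multiplicativity Property. The remaining technical points are minor: the adjoint extension of the bimodule identity to $a^*$, which uses only selfadjointness of $\Psi$, and the boundedness of $\Psi$ ensuring that $\cF$ is closed, so that the density argument over the generators of $\cC$ genuinely goes through.
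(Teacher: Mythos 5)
Your proof is correct and follows essentially the same route as the paper: localise to $C^*(e,a)$ where Choi's Multiplicativity Property applies, derive the Bimodule Property from membership in the multiplicative domain, and propagate fixedness to all of $C^*(e,a)$ by induction over words in $a,a^*$ followed by a density and continuity argument. Your formulation of the propagation step via the closed fixed-point subspace $\cF$ being stable under right multiplication by $a$ and $a^*$ is a slightly cleaner packaging of the paper's induction over monomials $p(a,a^*)$, and you are appropriately careful that selfadjointness is used only for the restriction $\Phi|_{C^*(e,a)}$, which is all that the hypotheses guarantee.
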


\begin{proof} (i)$\Ra$(ii). By assumptions it follows
\begin{equation*}
\Phi(a^*a)=a^*a=\Phi(a)^*\Phi(a),\quad \Phi(aa^*)=aa^*=\Phi(a)\Phi(a)^*,
\end{equation*} hence, $a\in\cA^\Phi\cap\cM_\Phi$.

(ii)$\LRa$(iii). By assumption and Theorem~\ref{t:choi}.(2), $\Phi|_{C^*(e,a)}$ 
has the Bimodule Property, hence 
$\Phi(xa)=\Phi(x)\Phi(a)=\Phi(x)a$ and $\Phi(ax)=\Phi(a)\Phi(x)=a\Phi(b)$ for all $x\in C^*(e,a)$.

(iii)$\Ra$(iv). By assumption and
using a straightforward induction argument, it follows that, for any $n\in\NN_0$, we have
\begin{equation}\label{e:bmp}
\Phi(xa^n)=\Phi(x)a^n,\quad \Phi(a^nx)=a^n\Phi(x),\quad x\in C^*(e,a),
\end{equation}
and, since $\Phi$ is selfadjoint, we have 
$\Phi(a^*)=\Phi(a)^*=a^*$, hence
\begin{equation}\label{e:bmpa}
\Phi(xa^{*n})=\Phi(x)a^{*n},\quad \Phi(a^{*n}x)=a^{*n}\Phi(x),\quad x\in C^*(e,a).
\end{equation}
From \eqref{e:bmp} and \eqref{e:bmpa}, by a straightforward induction argument, it follows that for 
any monomial $p$ in two noncommutive variables $X$ and $Y$
\begin{equation*}
p(X,Y)=X^{i_1}Y^{j_1}\cdots X^{i_m}Y^{j_m},\quad i_1,\ldots,j_m\in\NN_0,\ j_1,\ldots,j_m\in\NN_0,\ 
m\in\NN,
\end{equation*}
it follows that
\begin{equation}\label{e:pexy}
\Phi(p(a,a^*))=p(a,a^*),
\end{equation} where $p(a,a^*)\in\cA$ is the element obtained by formally
replacing $X$ with $a$ and $Y$ with $a^*$. Then, by linearity, it follows that \eqref{e:pexy} is true
for  any complex polynomials $p$ in two noncommutative variables $X$ and $Y$ hence,
since the collection of all elements of form $p(a,a^*)$
is dense in $\cA$ and $\Phi|_{C^*(e,a)}$ is continuous, assertion (ii) follows.

(iv)$\Ra$(i). This implication is clear.
\end{proof}

As an application of Theorem~\ref{t:unu} we record the special case of a normal element $a$, 
that is, $a^*a=aa^*$, when the condition of "locally complete 
positivity" follows from the condition of positivity.

\begin{corollary}\label{c:unu}
Let $\Phi\colon\cA\ra\cA$ be a linear map which is positive and unital, and let $a\in\cA$ be 
a normal element. The following assertions are equivalent:
\begin{itemize}
\item[(i)] $\Phi(a)=a$ and $\Phi(a^*a)=a^*a$.
\item[(ii)] $\Phi(b)=b$ for all $b\in C^*(e,a)$.
\end{itemize}
\end{corollary}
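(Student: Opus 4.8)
The plan is to deduce this directly from Theorem~\ref{t:unu}, verifying that its hypotheses hold automatically once $a$ is assumed normal. First I would observe the key structural fact: since $a$ is normal, the unital $C^*$-algebra $C^*(e,a)$ is \emph{commutative}, being generated by a single normal element together with the unit. This is what converts the mere positivity of $\Phi$ into the stronger local hypothesis required by Theorem~\ref{t:unu}.

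Next I would invoke the classical theorem of Stinespring \cite{Stinespring} quoted in the introduction: every positive linear map from a commutative $C^*$-algebra into an arbitrary $C^*$-algebra is automatically completely positive. Applying this to the restriction $\Phi|_{C^*(e,a)}\colon C^*(e,a)\ra\cA$, which is positive because $\Phi$ is, I conclude that $\Phi|_{C^*(e,a)}$ is completely positive. Thus the ``locally complete positivity'' hypothesis of Theorem~\ref{t:unu} is satisfied with no assumption on $\Phi$ beyond positivity.

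Then I would match the two conditions. Since $a$ is normal we have $a^*a=aa^*$, so the single requirement $\Phi(a^*a)=a^*a$ is equivalent to the pair $\Phi(a^*a)=a^*a$ and $\Phi(aa^*)=aa^*$. Together with $\Phi(a)=a$, this is precisely assertion (i) of Theorem~\ref{t:unu}, namely $a,a^*a,aa^*\in\cA^\Phi$. By the equivalence (i)$\LRa$(iv) of that theorem, this holds if and only if $C^*(e,a)\subseteq\cA^\Phi$, that is, $\Phi(b)=b$ for every $b\in C^*(e,a)$, which is assertion (ii) of the Corollary. This gives both implications simultaneously.

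The only point genuinely requiring care—and the one I would want to pin down precisely—is the invocation of Stinespring's automatic complete positivity: one must be sure the statement concerns positive maps whose \emph{domain} (not necessarily the codomain) is commutative, so that it applies to $\Phi|_{C^*(e,a)}$ with values in the possibly noncommutative algebra $\cA$. Everything else is a routine substitution of the normality hypothesis into Theorem~\ref{t:unu}, so I expect no serious obstacle.
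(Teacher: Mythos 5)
Your proposal is correct and follows essentially the same route as the paper: normality of $a$ makes $C^*(e,a)$ commutative, Stinespring's theorem upgrades positivity of $\Phi|_{C^*(e,a)}$ to complete positivity, and the equivalence (i)$\Leftrightarrow$(iv) of Theorem~\ref{t:unu} (with $a^*a=aa^*$ collapsing the two second-order conditions into one) finishes the argument. Your closing caution about the direction of Stinespring's result — commutative \emph{domain}, arbitrary codomain — is exactly the right point to check, and it is the version that holds.
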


\begin{proof} Only the implication (i)$\Ra$(ii) requires a proof. To see this, since $a$ is normal it
follows that $C^*(e,a)$ is a commutative $C^*$-algebra hence $\Phi|_{C^*(e,a)}\colon C^*(e,a)
\ra\cA$ is completely positive, see \cite{Stinespring}, and we can apply Theorem~\ref{t:unu}.
\end{proof}

One of the intrinsic deficiency of Theorem~\ref{t:unu} is referring to the fact that we do not know
that $a$ has the Bimodule Property on the whole $C^*$-algebra $\cA$. 
This deficiency is remedied for the case of quantum operations, in
the Heisenberg picture, due to the overall complete positivity property.

\begin{theorem}\label{t:doi}
Let $\cA$ be a $C^*$-algebra with unit $e$, let 
$\Phi\colon\cA\ra\cA$ be a unital completely positive linear map, and let $a\in\cA$. 
The following assertions are equivalent:
\begin{itemize}
\item[(i)] $a,a^*a,aa^*\in\cA^\Phi$, that is, $\Phi(a)=a$, $\Phi(a^*a)=a^*a$, and $\Phi(aa^*)=aa^*$.
\item[(ii)] $a\in\cA^\Phi\cap\cM_\Phi$, that is, $\Phi(ax)=a\Phi(x)$ and $\Phi(xa)=\Phi(x)a$ for all 
$x\in\cA$.
\item[(iii)] $C^*(e,a)\subseteq\cA^\Phi$, that is, $\Phi(b)=b$ for all $b\in C^*(e,a)$.
\end{itemize}
\end{theorem}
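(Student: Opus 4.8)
The plan is to prove the cycle of implications $(i)\Ra(ii)\Ra(iii)\Ra(i)$, exploiting the fact that here $\Phi$ is completely positive on \emph{all} of $\cA$, not merely on $C^*(e,a)$, which is precisely the feature that removes the deficiency noted before the statement. Since $\Phi$ is unital and completely positive, it is contractive, so Theorem~\ref{t:choi} applies to $\Phi$ on the whole algebra.

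First I would establish $(i)\Ra(ii)$. Assuming $\Phi(a)=a$, $\Phi(a^*a)=a^*a$, and $\Phi(aa^*)=aa^*$, I observe that $\Phi(a^*a)=a^*a=\Phi(a)^*\Phi(a)$ and $\Phi(aa^*)=aa^*=\Phi(a)\Phi(a)^*$, so by definition \eqref{e:md} we have $a\in\cM_\Phi$. Now, crucially, complete positivity of $\Phi$ on all of $\cA$ lets me invoke Theorem~\ref{t:choi}.(2) globally: from $\Phi(a^*a)=\Phi(a)^*\Phi(a)$ part~(i) of the Multiplicativity Property yields $\Phi(xa)=\Phi(x)\Phi(a)=\Phi(x)a$ for \emph{all} $x\in\cA$, and from $\Phi(aa^*)=\Phi(a)\Phi(a)^*$ part~(ii) yields $\Phi(ax)=\Phi(a)\Phi(x)=a\Phi(x)$ for all $x\in\cA$. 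This gives exactly the Bimodule Property on the whole of $\cA$ asserted in $(ii)$.

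Next, $(ii)\Ra(iii)$. Starting from the global Bimodule identities $\Phi(ax)=a\Phi(x)$ and $\Phi(xa)=\Phi(x)a$, together with $\Phi(a)=a$ and $\Phi(a^*)=\Phi(a)^*=a^*$ (using that $\Phi$ is selfadjoint, being positive), I would run the same induction as in the proof of Theorem~\ref{t:unu}: first get $\Phi(xa^n)=\Phi(x)a^n$, $\Phi(a^nx)=a^n\Phi(x)$, and the adjoint versions with $a^{*n}$, then conclude by a monomial induction that $\Phi(p(a,a^*))=p(a,a^*)$ for every polynomial $p$ in two noncommuting variables. Density of $\{p(a,a^*)\}$ in $C^*(e,a)$ together with continuity of $\Phi$ then gives $\Phi(b)=b$ for all $b\in C^*(e,a)$, which is $(iii)$. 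Finally $(iii)\Ra(i)$ is immediate, since $a,a^*a,aa^*$ all lie in $C^*(e,a)$.

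I do not expect a genuine obstacle here: the whole point is that the overall complete positivity assumption makes Theorem~\ref{t:choi}.(2) available on all of $\cA$, so the only nontrivial implication $(i)\Ra(ii)$ is handled in one stroke rather than through the local argument of Theorem~\ref{t:unu}. The mildest care-point is simply noting explicitly that $\Phi$ is contractive (hence Theorem~\ref{t:choi} applies) because it is unital and positive, and that the induction/density step is verbatim the one already carried out in Theorem~\ref{t:unu}, so it need not be repeated in full.
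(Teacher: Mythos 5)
Your proof is correct and is precisely the argument the paper intends: the paper omits the proof of Theorem~\ref{t:doi}, stating only that it follows the same line as Theorem~\ref{t:unu}, and your adaptation---using the global complete positivity to invoke Theorem~\ref{t:choi}.(2) on all of $\cA$ for $(i)\Ra(ii)$, then the same induction/density argument for $(ii)\Ra(iii)$---is exactly that line. No gaps; the care-points you flag (contractivity via Russo--Dye, selfadjointness of $\Phi$ for the $a^{*}$ identities) are the right ones.
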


The proof of this theorem follows the same line of argumentation as in the proof of 
Theorem~\ref{t:unu} and we omit it.

\subsection{Fixed Points of $w^*$-Continuous One-Parameter Semigroups}\label{ss:fpw}
Let $X$ be a Banach space. We consider a strongly continuous one-parameter 
semigroup $\{\Psi_t\}_{ t\geq 0}$ of linear bounded operators on $X$, that is,
\begin{itemize}
\item[(i)] $\Psi_t\colon X\ra X$ is a bounded linear operator for all $t\geq 0$.
\item[(ii)] $\Psi_s\Psi_t=\Psi_{s+t}$, for all $s,t\geq 0$.
\item[(iii)] $\Psi_0=I$.
\item[(iv)] $\RR_+\ni t\mapsto \Psi_t(x)\in X$ is continuous for each $x\in X$.
\end{itemize}
Under these assumptions, from the general theory of one-parameter semigroups, 
e.g.\ see E.~Hille and R.S.~Phillips \cite{HillePhillips}, N.~Dunford and J.T.~Schwartz \cite{DunfordSchwartz}, 
the \emph{infinitesimal generator} $\psi$ 
exists as a densely defined closed operator on $X$, with
\begin{equation}\label{e:gen}
\psi(x)=\lim_{t\ra0+}\frac{\Psi_t(x)-x}{t}=\frac{\de}{\de t}\Psi_t(x)|_{t=0},\quad x\in\dom(\psi),
\end{equation}
and
\begin{equation}\label{e:domgen}
\dom(\psi)=\{x\in X\mid \lim_{t\ra0+}\frac{\Psi_t(x)-x}{t}\mbox{ exists in } X\}. 
\end{equation}

In addition,  e.g.\ see Corollary VIII.1.5 in \cite{DunfordSchwartz}, the limit 
\begin{equation}\label{e:omega}
\omega=
\lim_{t\ra\infty}\log \|\Psi_t\|/t=\inf_{t>0}\log \|\Psi_t\|/t
\end{equation} exists with the \emph{growth bound} $\omega<\infty$ and, 
e.g.\ see Theorem VIII.1.11 in \cite{DunfordSchwartz},
for any complex number $\lambda$ with 
$\Re\lambda>\omega$, the operator $\lambda I-\psi$ has a bounded 
inverse. Also, by the proof of the Hille-Yosida-Phillips Theorem, e.g.\ see Theorem VIII.1.13 in 
\cite{DunfordSchwartz},
 we have
\begin{equation}\label{e:psite}
\Psi_t(x)=\lim_{\lambda\ra\infty} \emath^{-\lambda t}\sum_{n=0}^\infty \frac{(\lambda^2 t)^n
(\lambda I-\psi)^{-n}(x)}{n!},\quad x\in\dom(\psi), \ t\geq 0.
\end{equation}

Recall that $X^\sharp$ denotes the topological dual space of $X$.
For every strongly continuous one-parameter 
semigroup $\{\Psi_t\}_{t\geq 0}$ of bounded linear operators on $X$, the dual one-parameter 
semigroup $\{\Psi_t^\sharp\}_{t\geq 0}$ of bounded linear operators on $X^\sharp$ exists, that is, 
\begin{equation}\label{e:des}
\langle \Psi_t(x),f\rangle=\langle x,\Psi_t^\sharp(f)\rangle,
\quad x\in X,\ f\in X^\sharp,\ t\geq 0,
\end{equation} with the following properties
\begin{itemize}
\item[(i)] $\Psi_t^\sharp\colon X^\sharp\ra X^\sharp$ 
is a linear bounded and $w^*$-continuous operator for all $t\geq 0$.
\item[(ii)] $\Psi_t^\sharp\Psi_s^\sharp=\Psi_{s+t}^\sharp$, for all $s,t\geq 0$.
\item[(iii)] $\Psi_0^\sharp=I$.
\item[(iv)] $\RR_+\ni t\mapsto \Psi_t^\sharp(f)\in X^\sharp$ is $w^*$-continuous for each 
$f\in X^\sharp$.
\end{itemize}
Then, e.g.\ see \cite{Phillips}, 
$\{\Psi_t^\sharp\}_{t\geq 0}$ is a $w^*$-continuous semigroup of operators on 
$X^\sharp$ and hence,
the \emph{$w^*$-infinitesimal generator} $\psi^\sharp$ 
exists as a $w^*$-closed operator on $X^\sharp$, hence a closed operator on $X^\sharp$, with
\begin{equation}\label{e:genp}
\psi^\sharp(f)=w^*\mbox{-}\lim_{t\ra0+}\frac{\Psi_t^\sharp(f)-f}{t}=w^*\mbox{-}
\frac{\de}{\de t}\Psi_t^\sharp(f)|_{t=0},
\end{equation}
and
\begin{equation}\label{e:domgenp}
\dom(\psi^\sharp)=\{f\in X^\sharp\mid w^*\mbox{-}\lim_{t\ra0+}\frac{\Psi_t^\sharp(f)-f}{t}
\mbox{ exists in } X^\sharp\}. 
\end{equation}

The notation we use for $\psi^\sharp$ looks like an abuse but actually it is not: 
by the R.S.~Phillips's Theorem in \cite{Phillips},
\begin{equation}\label{e:degen}
\dom(\psi^\sharp)=\{f\in X^\sharp\mid X\ni f\mapsto\langle x,\psi(f)\rangle\mbox{ is 
continuous }\},
\end{equation}
and
\begin{equation}\label{e:depsi}
\langle \psi(x),f\rangle=\langle x,\psi^\sharp(f)\rangle,\quad 
x\in\dom(\psi),\ f\in\dom(\psi^\sharp),
\end{equation}
hence, the $w^*$-infinitesimal generator $\psi^\sharp$ of the dual $w^*$-continuous
semigroup $\{\Psi^\sharp_t\}_{t\geq 0}$ on $X^\sharp$
is indeed the dual operator of the infinitesimal generator $\psi$ of the strongly continuous 
semigroup $\{\Psi_t\}_{t\geq 0}$ on $X$ and, consequently, the notation for 
$\psi^\sharp$ is fully justified.

In addition, one of the major differences between the two infinitesimal generators 
$\psi$ and $\psi^\sharp$ is that $\dom(\psi^\sharp)$ may not be dense in $X^\sharp$, although it is 
always $w^*$-dense, while $\dom(\psi)$ is always dense in $X$.

\begin{theorem}\label{t:dual}
Let $\{\Psi_t\}_{t\geq 0}$ be a strongly continuous semigroup of operators on a
Banach space $X$, let $\{\Psi_t^\sharp\}_{t\geq 0}$ be the associated dual $w^*$-continuous
semigroup of operators on $X^\sharp$, and $\psi$ and, respectively, $\psi^\sharp$, 
their infinitesimal generators. Considering $f\in X^\sharp$, the following assertions are equivalent:
\begin{itemize}
\item[(i)] $\Psi_t^\sharp(f)=f$ for all real $t\geq 0$.
\item[(ii)] $f\in\ker(\psi^\sharp)$, that is, $f\in\dom(\psi^\sharp)$ and $\psi^\sharp(f)=0$.
\end{itemize}
\end{theorem}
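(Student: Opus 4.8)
The plan is to dispatch the easy implication (i)$\Ra$(ii) directly from the definition of $\psi^\sharp$, and to prove the substantive implication (ii)$\Ra$(i) by transferring the computation to the pre-dual space $X$, where the generator $\psi$ has a genuinely dense domain and the semigroup orbits are norm-differentiable. This sidesteps the awkward features of $\psi^\sharp$—whose domain is only $w^*$-dense and whose orbits are only $w^*$-differentiable—that are emphasised in the paragraph preceding the statement.

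For (i)$\Ra$(ii), if $\Psi_t^\sharp(f)=f$ for all $t\geq 0$, then the difference quotient $(\Psi_t^\sharp(f)-f)/t$ vanishes identically for $t>0$, so its $w^*$-limit as $t\ra 0+$ exists and equals $0$; by \eqref{e:domgenp} this places $f\in\dom(\psi^\sharp)$ with $\psi^\sharp(f)=0$.

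For (ii)$\Ra$(i), I would fix an arbitrary $x\in\dom(\psi)$ and exploit the standard facts of the general semigroup theory recalled above, namely that $\dom(\psi)$ is invariant under $\{\Psi_t\}_{t\geq 0}$ and that the orbit $t\mapsto\Psi_t(x)$ is norm-differentiable with $\tfrac{\de}{\de t}\Psi_t(x)=\psi(\Psi_t(x))$. Pairing against the bounded functional $f$ and invoking the duality relation \eqref{e:depsi}, applied to the element $\Psi_t(x)\in\dom(\psi)$, yields
\[
\frac{\de}{\de t}\langle\Psi_t(x),f\rangle=\langle\psi(\Psi_t(x)),f\rangle=\langle\Psi_t(x),\psi^\sharp(f)\rangle=0,
\]
since $\psi^\sharp(f)=0$ by hypothesis. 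Hence $t\mapsto\langle\Psi_t(x),f\rangle$ is constant and equal to its value $\langle x,f\rangle$ at $t=0$. Rewriting this through the defining relation \eqref{e:des} of the dual semigroup gives $\langle x,\Psi_t^\sharp(f)-f\rangle=0$ for every $x\in\dom(\psi)$ and every $t\geq 0$.

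The final step, and the only place where a genuine property of the pre-dual is needed, is to conclude from this that $\Psi_t^\sharp(f)=f$: since $\dom(\psi)$ is \emph{dense} in $X$ and $\Psi_t^\sharp(f)-f$ is a continuous functional vanishing on a dense set, it vanishes identically. The one point to be careful about is the interchange of $\tfrac{\de}{\de t}$ with the pairing $\langle\cdot,f\rangle$, which is immediate because the orbit is norm-differentiable and $f$ is bounded; everything else is routine. The crux of the argument is the recognition that the density of $\dom(\psi)$ in $X$—precisely the property contrasted with the merely $w^*$-dense $\dom(\psi^\sharp)$ just before the theorem—is exactly what lets the conclusion pass from a dense set to all of $X^\sharp$.
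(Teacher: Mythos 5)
Your proof is correct, but the substantive implication (ii)$\Ra$(i) is argued by a genuinely different route than the paper's. The paper works through the Hille--Yosida--Phillips exponential formula \eqref{e:psite}: from $\psi^\sharp(f)=0$ it derives $(\lambda I-\psi^\sharp)^{-n}(f)=\lambda^{-n}f$, sums the resulting series to $\emath^{\lambda t}f$, and passes to the limit $\lambda\ra\infty$ against test vectors $x\in\dom(\psi)$. You instead differentiate the scalar orbit $t\mapsto\langle\Psi_t(x),f\rangle$, using the standard facts that $\dom(\psi)$ is invariant under the semigroup and that $\tfrac{\de}{\de t}\Psi_t(x)=\psi(\Psi_t(x))$ in norm, so that Phillips's duality relation \eqref{e:depsi} applied to $\Psi_t(x)\in\dom(\psi)$ kills the derivative. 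Your argument is the more elementary of the two: it avoids the resolvent machinery and the interchange of a $\lambda$-limit with the duality pairing entirely, at the cost of invoking the (standard, but not explicitly recalled in Subsection~\ref{ss:fpw}) invariance of $\dom(\psi)$ and the differentiability of orbits starting there. Both proofs rest on the same two pillars in the end --- the identification of $\psi^\sharp$ as the adjoint of $\psi$ via \eqref{e:depsi}, and the norm-density of $\dom(\psi)$ in $X$ to upgrade the conclusion from a dense set of test vectors to all of $X$ --- so the conceptual content is the same; only the mechanism for showing $\langle\Psi_t(x),f\rangle=\langle x,f\rangle$ differs.
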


\begin{proof} (i)$\Ra$(ii). This is a clear consequence of \eqref{e:genp} and \eqref{e:domgenp}.

(ii)$\Ra$(i). Let $\lambda>\max\{\omega,0\}$, where $\omega$ is defined as in \eqref{e:omega}. 
Since $\psi^\sharp$ is the dual operator of $\psi$, as in \eqref{e:depsi} and \eqref{e:degen}, and
$\lambda I-\psi$ is boundedly invertible, it follows that $\lambda I-\psi^\sharp$ is boundedly 
invertible, e.g.\ see Theorem 1.5 in \cite{Phillips}. 
Consequently, for any $x\in\dom(\psi)$ and any $g\in X^\sharp$ we have
\begin{align*}
\langle x,\emath^{-\lambda t}\sum_{n=0}^\infty 
\frac{(\lambda^2 t)(\lambda I-\psi^\sharp)^{-n}(g)}{n!} \rangle
& = \langle x,\emath^{-\lambda t}\bigl(\sum_{n=0}^\infty 
\frac{(\lambda^2 t)(\lambda I-\psi)^{-n}}{n!}\bigr)^\sharp(g)\rangle \\
& =\langle \emath^{-\lambda t}\sum_{n=0}^\infty 
\frac{(\lambda^2 t)(\lambda I-\psi)^{-n}(x)}{n!},g\rangle \end{align*}
hence, by \eqref{e:psite} it follows that
\begin{equation}\label{e:lima}
\lim_{\lambda\ra\infty}
\langle x,\emath^{-\lambda t}\sum_{n=0}^\infty 
\frac{(\lambda^2 t)(\lambda I-\psi^\sharp)^{-n}(g)}{n!} \rangle= 
\langle \Psi_t(x),g\rangle.
\end{equation}

On the other hand, from $\psi^\sharp(f)=0$ it follows that 
$(\lambda I-\psi^\sharp)(f)=\lambda f$ hence 
$(\lambda I-\psi^\sharp)^{-1}(f)=\frac{1}{\lambda} f$. By induction we obtain
\begin{equation}\label{e:lan}
(\lambda I-\psi^\sharp)^{-n}(f)=\frac{1}{\lambda^n}f,\quad n\geq 0.
\end{equation}
Consequently, it follows that
\begin{equation*}
\sum_{n=0}^\infty \frac{(\lambda^2 t)^n (\lambda I-\psi^\sharp)^{-n}(f)}{n!}=\sum_{n=0}^\infty
\frac{(\lambda t)^n}{n!}f=\emath^{\lambda t}f,
\end{equation*}
hence, letting $g=f$ in \eqref{e:lima}, it follows that 
\begin{equation*}
\langle x,\Psi_t^\sharp (f)\rangle= \langle\Psi_t(x),g\rangle
= \lim_{\lambda\ra\infty} \langle x,\emath^{-\lambda t}
\emath^{\lambda t}f\rangle = \langle x,f\rangle,
\end{equation*}
and then, since $\dom(\psi)$ is dense in $X$, it follows that $\Psi_t^\sharp(f)=f$ for all $t\geq 0$.
\end{proof}

\subsection{An Ergodic Theorem in von Neumann Algebras}\label{ss:etvna}
We first recall some definitions, in addition to those in Subsection~\ref{ss:fppmcsa}.
Let $\cA$ and $\cB$ be $C^*$-algebras and let $\cV\subseteq \cA$ and $\cW\subseteq\cB$ 
be subspaces. For any linear map $\Phi\colon\cV\ra\cW$ and any natural number $n$, 
the \emph{$n$-th order amplification} $\Phi_n\colon \cV\otimes M_n\ra \cW\otimes M_n$ can 
be defined as $\Phi_n=\Phi\otimes I_n$, where $I_n$ denotes the identity operator on $M_n$. 
Explicitly, by means of the canonical identifications $M_n(\cV)=\cV\otimes M_n$ and 
$M_n(\cW)=\cW\otimes M_n$, this means
\begin{equation}
\Phi_n([v_{i,j}]_{i,j=1}^n)= [\Phi(v_{i,j})]_{i,j=1}^n,\quad [v_{i,j}]_{i,j=1}^n\in M_n(\cV).
\end{equation}
Note that,
by the embeddings $M_n(\cV)\subseteq M_n(\cA)$ and $M_n(\cW)\subseteq M_n(\cB)$, it
follows that $M_n(\cV)$ and, respectively, $M_n(\cW)$ have canonical norms induced by the 
$C^*$-norms on $M_n(\cA)$ and $M_n(\cB)$. Consequently, we can let $\|\Phi_n\|$ denote 
the corresponding operator norm. Clearly,
\begin{equation}
\|\Phi\|=\|\Phi_1\|\leq \|\Phi_2\|\leq \cdots\leq \|\Phi_n\| \leq \|\Phi_{n+1}\|\leq \cdots.
\end{equation}
The map $\Phi$ is called \emph{completely bounded} if 
\begin{equation}
\|\Phi\|_\mathrm{cb}=\sup_{n\geq 1} \|\Phi_n\|<\infty.
\end{equation}
Let $\mathcal{CB}(\cV,\cW)$ denote the vector space of all completely bounded maps 
$\Phi\colon\cV\ra\cW$. Also, such a map $\Phi$ is called \emph{completely contractive} if 
$\|\Phi\|_\mathrm{cb}\leq 1$. A linear map $\Phi\colon\cV\ra\cV$ is called an \emph{idempotent}
if $\Phi^2=\Phi \Phi=\Phi$ and, it is called a \emph{projection} if it is completely contractive 
and idempotent. A subspace $\cV\subseteq\cB(\cH)$, for some Hilbert space $\cH$, 
is called \emph{injective} if there exists a projection $\Phi\colon\cB(\cH)\ra\cB(\cH)$ with 
range equal to $\cV$.

A linear map $\Phi\colon\cA\ra\cA$ is called a \emph{conditional expectation} if it is positive, 
idempotent, and  it has the following bimodule property: 
$\Phi(ar)=\Phi(a)r$ and $\Phi(ra)=r\Phi(a)$, for all $a\in\cA$ and all $r\in\ran(\Phi)$. 
By a classical result 
of J.~Tomyama \cite{Tomiyama}, a $C^*$-algebra 
$\cA\subseteq\cB(\cH)$ is injective if and only if there is a conditional expectation in $\cB(\cH)$ 
with range equal to $\cA$.

For a semigroup $\mathbf{\Phi}=\{\Phi_t\}_{t\geq 0}$ of 
unital, completely positive maps on a $C^*$-algebra $\cM$, 
we consider $\cM^{\mathbf{\Phi}}$ the set of joint
fixed points of $\mathbf{\Phi}$, that is,
\begin{equation}
\cM^\mathbf{\Phi}=\bigcup_{t\geq 0}\cM^{\Phi_t}=\{a\in\cM\mid \Phi_t(a)=a,\mbox{ for all }t\geq 0\},
\end{equation}
see Subsection~\ref{ss:fppmcsa}, which is an operator system, 
and the joint bimodule domain
\begin{align}
\cI(\mathbf{\Phi}) & =\bigcap_{t\geq 0} \cI(\Phi_t)\\
& =\{a\in\cM\mid \Phi_t(ab)=a\Phi_t(b),\ 
\Phi_t(ba)=\Phi_t(b)a,\mbox{ for all }b\in\cA\mbox{ and all } t\geq 0\},\nonumber
\end{align}
which is clearly a $C^*$-subalgebra of $\cM$ and included in  $\cM^{\mathbf{\Phi}}$. In 
case $\cM$ is a von Neumann algebra and each $\Phi_t$ is $w^*$-continuous, 
$\cM^{\mathbf{\Phi}}$ is $w^*$-closed and $\cI(\mathbf{\Phi})$ is a von Neumann subalgebra of 
$\cM$.

\begin{theorem}\label{t:inj}
Let $\cM$ be a von Neumann algebra and $\mathbf{\Phi}=\{\Phi_t\}_{t\geq 0}$ be a 
$w^*$-continuous semigroup of $w^*$-continuous, unital, completely positive maps on $\cM$. 
Then:
\begin{itemize}
\item[(a)] There exists  a completely
positive,  unital, and idempotent map $\Psi\colon\cM\ra\cM$ such that the
set of joint fixed points $\cM^{\mathbf{\Phi}}$ is the range of $\Psi$.
\item[(b)] The following assertions are equivalent:
\begin{itemize}
\item[(i)] $\cM^{\mathbf{\Phi}}$ is stable under multiplication.
\item[(ii)] $\cM^{\mathbf{\Phi}}$ is a von Neumann algebra.
\item[(iii)] $\cM^{\mathbf{\Phi}}=\cI(\mathbf{\Phi})$.
\item[(iv)]  $\Psi$ is a conditional expectation.
\end{itemize}
\item[(c)] If $\cM=\cB(\cH)$ and $\cB(\cH)^{\mathbf{\Psi}}$ is stable under multiplication, then
$\cB(\cH)^{\mathbf{\Psi}}$ is an injective von Neumann algebra.
\end{itemize}
\end{theorem}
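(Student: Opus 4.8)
The plan is to construct $\Psi$ as an invariant ergodic mean of the semigroup and then read off parts (b) and (c) from it. For part (a), since each $\Phi_t$ is unital and completely positive it is contractive, so for fixed $a\in\cM$ and $f$ in the predual $\cM_*$ the function $t\mapsto\langle\Phi_t(a),f\rangle$ is bounded and, by $w^*$-continuity of $\mathbf{\Phi}$, continuous on $\RR_+$. Because $(\RR_+,+)$ is amenable there is a translation-invariant mean $m$ on the bounded continuous functions on $\RR_+$; I would define $\Psi(a)\in\cM=(\cM_*)^\sharp$ by $\langle\Psi(a),f\rangle=m\bigl(t\mapsto\langle\Phi_t(a),f\rangle\bigr)$. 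Linearity in $a$ and the bound $\|\Psi(a)\|\leq\|a\|$ are immediate, unitality follows from $\Phi_t(e)=e$, and for complete positivity I would apply the same mean $m$ to the amplifications $(\Phi_t)_n=\Phi_t\otimes I_n$, which are positive; since $m$ commutes with taking matrix entries, the resulting map is $\Psi\otimes I_n$, so $\Psi_n$ is positive for every $n$.

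The identification of the range is the crux of (a). First, invariance of $m$ under translation by $s$ together with $w^*$-continuity of $\Phi_s$ gives, through the preadjoint $\Phi_{s*}$ on $\cM_*$, the identity $\langle\Phi_s\Psi(a),f\rangle=m\bigl(t\mapsto\langle\Phi_{s+t}(a),f\rangle\bigr)=\langle\Psi(a),f\rangle$, so $\Phi_s\Psi(a)=\Psi(a)$ for all $s\geq0$ and hence $\ran(\Psi)\subseteq\cM^{\mathbf{\Phi}}$. Conversely, if $b\in\cM^{\mathbf{\Phi}}$ then $t\mapsto\langle\Phi_t(b),f\rangle$ is the constant $\langle b,f\rangle$, whose mean is $\langle b,f\rangle$, so $\Psi(b)=b$; this shows $\cM^{\mathbf{\Phi}}\subseteq\ran(\Psi)$, that $\Psi$ fixes $\cM^{\mathbf{\Phi}}$ pointwise, and therefore that $\Psi^2=\Psi$. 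Thus $\Psi$ is the desired completely positive, unital, idempotent map with $\ran(\Psi)=\cM^{\mathbf{\Phi}}$.

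For part (b) I would run the implications around a cycle. Always $\cI(\mathbf{\Phi})\subseteq\cM^{\mathbf{\Phi}}$ and $\cM^{\mathbf{\Phi}}$ is a $w^*$-closed operator system, so (i)$\LRa$(ii) is the standard fact that a $w^*$-closed self-adjoint unital subspace is a von Neumann algebra exactly when it is multiplicatively closed. For (i)$\Ra$(iii): if $a\in\cM^{\mathbf{\Phi}}$ and $\cM^{\mathbf{\Phi}}$ is multiplicatively stable then $a^*a,aa^*\in\cM^{\mathbf{\Phi}}$, so Theorem~\ref{t:doi}, applied to each unital completely positive $\Phi_t$, yields the bimodule property of $a$ for every $t$, i.e.\ $a\in\cI(\mathbf{\Phi})$; the reverse inclusion is automatic, giving $\cM^{\mathbf{\Phi}}=\cI(\mathbf{\Phi})$, and (iii)$\Ra$(i) holds because $\cI(\mathbf{\Phi})$ is a $C^*$-algebra. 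For (iii)$\Ra$(iv): if every $r\in\cM^{\mathbf{\Phi}}=\cI(\mathbf{\Phi})$ satisfies $\Phi_t(rb)=r\Phi_t(b)$ and $\Phi_t(br)=\Phi_t(b)r$ for all $t$, then passing these identities through the mean (using that $\cM_*$ is an $\cM$-bimodule) gives $\Psi(rb)=r\Psi(b)$ and $\Psi(br)=\Psi(b)r$, so $\Psi$ is a conditional expectation onto its range. Finally (iv)$\Ra$(i) is immediate: for $r_1,r_2\in\ran(\Psi)$ the bimodule property gives $\Psi(r_1r_2)=r_1\Psi(r_2)=r_1r_2$, so $r_1r_2\in\ran(\Psi)=\cM^{\mathbf{\Phi}}$.

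Part (c) then follows quickly: when $\cM=\cB(\cH)$ and $\cB(\cH)^{\mathbf{\Phi}}$ is stable under multiplication, part (b) makes $\Psi$ a conditional expectation of $\cB(\cH)$ onto the von Neumann algebra $\cB(\cH)^{\mathbf{\Phi}}$, and Tomiyama's theorem (quoted above) forces $\cB(\cH)^{\mathbf{\Phi}}$ to be injective. The main obstacle I anticipate lies in part (a): guaranteeing that the mean produces a genuinely completely positive map and that its range is exactly the joint fixed-point set. The delicate points are the interchange of the invariant mean with the amplification and bimodule structures, and the careful use of translation-invariance together with the $w^*$-continuity (hence preadjointness) of each $\Phi_s$ to secure $\Phi_s\Psi=\Psi$; the amenability of $\RR_+$ is precisely what makes an invariant mean available, rather than a mere Cesàro limit point whose linearity would be problematic.
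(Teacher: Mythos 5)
Your proof is correct, and it reaches the same three conclusions by a genuinely different construction in part (a). The paper builds $\Psi$ from the Ces\`aro averages $\Psi_t=\frac{1}{t}\int_0^t\Phi_s\,\de s$ and extracts a point-$w^*$ cluster point of $\{\Psi_n\}$ using Alaoglu and Tychonov; the invariance $\Phi_t\Psi=\Psi\Phi_t=\Psi$ then has to be earned through the norm estimate $\|\Psi_n-\Phi_t\Psi_n\|\leq 2t/n\ra 0$. You instead invoke amenability of $(\RR_+,+)$ and define $\Psi$ by an invariant mean applied to $t\mapsto\langle\Phi_t(a),f\rangle$; this makes $\Phi_s\Psi=\Psi$ an immediate consequence of translation invariance (via the preadjoints $\Phi_{s*}$, which exist precisely because each $\Phi_t$ is assumed $w^*$-continuous), and idempotence falls out of $\Psi$ fixing $\cM^{\mathbf{\Phi}}$ pointwise. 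Both routes are nonconstructive (Tychonov versus the existence of an invariant mean) and both deliver the same object; yours trades the integral estimates for the abstract mean, and the only place you are terse is complete positivity, where the clean justification is that for $A\in M_n(\cM)^+$ and any positive normal functional $F$ on $M_n(\cM)$ the scalar function $t\mapsto F((\Phi_t)_n(A))$ is nonnegative, hence so is its mean $F(\Psi_n(A))$, and positive normal functionals detect positivity in a von Neumann algebra. In part (b) you also deviate slightly: for (i)$\Ra$(iv) the paper applies Choi's multiplicativity theorem directly to the idempotent $\Psi$ (using $\Psi(a^*a)=a^*a=\Psi(a)^*\Psi(a)$ on its range), whereas you route through (iii) by applying Theorem~\ref{t:doi} to each $\Phi_t$ and then pushing the bimodule identities through the mean using the $\cM$-bimodule structure of $\cM_*$; both are valid, the paper's being marginally shorter. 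Part (c) is handled the same way in substance, via Tomiyama's characterisation of injectivity by conditional expectations.
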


\begin{proof} (a) For each real number $t> 0$, let $\Psi_t\colon\cM\ra\cM$ be defined by
\begin{equation}
\Psi_t=\frac{1}{t}\int_0^t \Phi_s\de s.
\end{equation}
The integral converges with respect to the point-$w^*$-topology, that is, for all $a\in\cM$ and all 
$f\in\cM_*$, we have
\begin{equation*}
\langle \Psi_t(a),f\rangle=\frac{1}{t}\int_0^t \langle \Phi_s(a),f\rangle \de s.
\end{equation*} 
It is easy to see that $\Psi_t$ is 
$w^*$-continuous, unital, and completely positive and hence,  by Russo--Dye's Theorem, 
a completely contractive map for each $t> 0$. By the 
Alaoglu's Theorem, the closed unit ball of $\cM$ is $w^*$-compact, hence by Tyhonov's Theorem 
the closed unit ball of 
$\mathcal{CB}(\cM)$ is compact with respect to the point-$w^*$-topology. Consequently, 
considering the sequence $\{\Psi_n\}_{n\in\NN}$, there exists a subsequence 
$\{\Psi_{k_n}\}_{n\in\NN}$ such that
\begin{equation*}
w^*\mbox{-}\lim_{n\ra \infty}\Psi_{k_n}(a)=\Psi(a),\quad a\in\cM,
\end{equation*}
for some linear map $\Psi\colon\cM\ra\cM$. Clearly, $\Psi$ is unital and completely positive.
Let $t\geq 0$ be an arbitrary real number and 
$n\in\NN$ be large enough such that $t\leq n$. Then
\begin{align*}
\Psi_{n}-\Phi_t \Psi_{n} & = \frac{1}{n}\bigl(\int_0^n\Phi_s\de s-\int_0^{n}\Phi_{t+s}\de s\bigr) \\
& =\frac{1}{n} \bigl( \int_0^n\Phi_s\de s-\int_t^{t+n}\Phi_s\de s\bigr) \\
& = \frac{1}{n}\bigl( \int_0^t\Phi_s\de s-\int_n^{t+n}\Phi_s\de s\bigr)
\end{align*}
hence
\begin{equation}\label{e:pan}
\|\Psi_n-\Phi_t\Psi_n\|\leq \frac{1}{n}\bigl( \int_0^t\|\Phi_s\|\de s-\int_n^{t+n}\|\Phi_s\|\de s\bigr)
=\frac{2t}{n}\xrightarrow[n\ra\infty]{} 0.
\end{equation}

On the other hand, using the representation
\begin{equation}
\Phi_t\Psi-\Psi=(\Phi_t\Psi-\Phi_t\Psi_{k_n})+(\Phi_t\Psi_{k_n}-\Psi_{k_n})+(\Psi_{k_n}-\Psi),\quad 
n\in\NN,
\end{equation}
and taking into account that, for all $a\in\cM$, by the defining property of the subsequence 
$(\Psi_{k_n})_{n\in\NN}$, we have
\begin{equation*}
(\Phi_t\Psi-\Phi_t\Psi_{k_n})(a)=\Phi_t(\Psi(a)-\Psi_{k_n}(a))\xrightarrow[n\ra\infty]{w^*} 0,
\end{equation*}
and then of \eqref{e:pan},
it follows that $\Phi_t\Psi=\Psi$, for all $t\geq 0$. 
Similarly we obtain $\Psi\Phi_t=\Psi$ for all $t\geq 0$, hence
\begin{equation}\label{e:pas}
\Phi_t\Psi=\Psi\Phi_t=\Psi,\mbox{ for all }t\geq 0.
\end{equation}
From \eqref{e:pas} we get
\begin{equation*}
\Psi_{k_n}(\Psi(a))=\frac{1}{k_n}\int_0^{k_n}\Phi_s(\Psi(a))\de s=\Psi(a),\quad a\in\cM,\ n\in\NN,
\end{equation*}
and then letting $n\ra\infty$ it follows that $\Psi\Psi=\Psi$, hence $\Psi$ is an idempotent. 
If $a\in\cM^{\mathbf{\Phi}}$ is arbitrary, then $\Psi_{k_n}(a)=a$ for all $n\in\NN$ whence, letting 
$n\ra\infty$ it follows $\Psi(a)=a$. We have proven that $\cM^{\mathbf{\Phi}}\subseteq\ran(\Psi)$.
Since, by \eqref{e:pas}, $\ran(\Psi)\subseteq\cM^{\mathbf{\Phi}}$, we have
$\cM^{\mathbf{\Phi}}=\ran(\Psi)$.

(b) Only the equivalence of (i) and (iv) requires a proof.

Assume firstly that $\cM^{\mathbf{\Phi}}$ is stable under multiplication. By the result at item (a),
it follows that $\ran(\Psi)=\cM^{\mathbf{\Phi}}$ is a von Neumann algebra. Then, for arbitrary 
$a\in\ran(\Psi)$,
\begin{equation*}
\Psi(a)^*\Psi(a)=a^*a=\Psi(a^*a),\quad \Psi(a)\Psi(a)^*=aa^*=\Psi(aa^*),
\end{equation*}
hence, by Theorem~\ref{t:choi}, for any $b\in \cM$ we have
\begin{equation*}
\Psi(ab)=\Psi(a)\Psi(b)=a\Psi(b),\quad \Psi(ba)=\Psi(b)\Psi(a)=\Psi(b)a,
\end{equation*}
consequently $\Psi$ is a conditional expectation.

Conversely, if $\Psi$ is a conditional expectation then $\cM^{\mathbf{\Phi}}=\ran(\Psi)$ is a 
$C^*$-algebra, hence stable under multiplication.

(c) This is now a consequence of the results proven at item (a) and item (b).
\end{proof}

\section{Dynamical Systems of Stochastic/Markov Maps: The Noncommutative Case}
\label{s:dssm}

Let $\cH$ be a Hilbert space, let $\cB(\cH)$ be the von Neumann algebra of all 
bounded linear operators $T\colon \cH\ra\cH$ and let $\cB_1(\cH)$ be the \emph{trace-class}, 
that is, the collection of all 
operators $T\in\cB(\cH)$ subject to the condition $\|T\|_1=\tr(|T|)<+\infty$, where $|T|=(T^*T)^{1/2}$ 
denotes the module of $T$ and $\tr$ denotes the usual normal faithful semifinite \emph{trace} on 
$\cB(\cH)$. Let 
$\cD(\cH)$ denote the set of \emph{states}, or \emph{density operators}, 
with respect to $\cH$, that is, the set of 
all positive elements $\rho\in\cB_1(\cH)$ with $\tr(\rho)=\|\rho\|_1=1$.

A linear map $\Psi\colon \cB_1(\cH)\ra\cB_1(\cH)$ is called 
\emph{stochastic} if it maps states into states, equivalently,
if it is positive, that is, $\Psi(A)\geq0$ for all $A\in\cB_1(\cH)^+$, 
and \emph{trace-preserving}, that is, 
$\tr(\Psi(T))=\tr(T)$ for all $T\in\cB_1(\cH)$. The map $\Psi\colon\cB_1(\cH)\ra\cB_1(\cH)$ 
is called a \emph{quantum operation}, 
if it is completely positive, see Subsection~\ref{ss:fppmcsa} for definition, 
and trace-preserving. Note that, the trace-class $\cB_1(\cH)$ is considered here as a
$*$-subspace of the $C^*$-algebra $\cB(\cH)$ and, consequently, the concept of completely
positive map on $\cB_1(\cH)$ makes perfectly sense. 
Clearly, any quantum operation is a stochastic map. 

We note that the definition of a quantum operation we adopt here is a bit more restrictive than 
usual. In 
quantum information theory they use the term of a \emph{quantum communication channel}, or
briefly a \emph{quantum channel}, for what we call here a quantum operation.

For a fixed Banach space $X$, recall that we denote its topological dual space by $X^\sharp$ 
and the duality map by 
$X\times X^\sharp\ni(x,f)\mapsto \langle x,f\rangle$, see Subsection~\ref{ss:fpw}. 
The topics of this article refer to the 
Banach space $(\cB_1(\cH),\|\cdot\|_1)$ and its topological dual Banach space $(\cB(\cH),\|\cdot\|)$
with the duality map $\cB_1(\cH)\times \cB(\cH)\ni(T,S)\mapsto \langle T,S\rangle=\tr(TS)$, 
e.g.\ see Theorem~19.2 in \cite{Conway}. 
In particular, for a quantum operation $\Psi$ when viewed as a trace-preserving completely positive
map $\Psi\colon\cB_1(\cH)\ra\cB_1(\cH)$, one usually refers to the Schr\"odinger picture, to which the Heisenberg picture is corresponding by duality:
the dual map $\Psi^\sharp \colon\cB(\cH)\ra\cB(\cH)$ is defined by
\begin{equation*}
\langle \Psi(T),S\rangle = \tr(\Psi(T)S)=\tr(T\Psi^\sharp(S))=\langle T,\Psi^\sharp(S)\rangle,\quad 
T\in\cB_1(\cH),\ S\in\cB(\cH),
\end{equation*}
and it is a ultraweakly continuous ($w^*$-continuous) 
completely positive and unital linear map. Similarly, if $\Psi$ is a  
stochastic linear map then its dual $\Psi^\sharp$ is a ultraweakly continuous positive and 
unital linear map on $\cB(\cH)$, called a \emph{ Markov map}.

There are many quantum operations. For example, if $\{A_k\mid k\in\NN\}$ is a collection of 
operators in $\cB(\cH)$ such that $\sum_{k=1}^\infty A_kA_k^*=I$ then the linear map
$\cB_1(\cH)\ni T\mapsto\sum_{k=1}^\infty A_k^*TA_k\in\cB_1(\cH)$ is a quantum operation. 
The following example shows that there exist  
stochastic maps that are not quantum operations. The 
idea of using the transpose map for this kind of examples can be tracked back to
W.B.~Arveson \cite{Arveson1}, \cite{Arveson2}. Stochastic maps that are not quantum
operations, in particular the transpose map, play an important role in entanglement detectors in
quantum information theory, e.g.\ see D.~Chruscinski and A.~Kossakowski 
\cite{ChruscinskiKossakowski}, R.~Horodecki et al. \cite{Horodecki4} and the rich bibliography 
cited there.

\begin{example}\label{ex:stochastic} Let $\cH$ be an arbitrary Hilbert space with dimension at least
$2$, for which we fix an orthonormal basis $\{e_j\}_{j\in\cJ}$. We consider the \emph{conjugation} 
operator $J\colon \cH\ra\cH$ defined by $Jh=\overline h$ where, for arbitrary 
$h=\sum_{j\in\cJ} h_j e_j$, we let $\overline h=\sum_{j\in\cJ}\overline{h}_j e_j$. Then $J$ is 
conjugate linear, conjugate selfadjoint, that is, it has the following property
\begin{equation}\label{e:lap} \langle Jh,k\rangle=\langle Jk,h\rangle,\quad h,k\in\cH,
\end{equation}
isometric, and $J^2=I$.

Further on, let $\tau\colon \cB(\cH)\ra\cB(\cH)$ be defined by $\tau(S)=JS^*J$, for all 
$T\in\cB(\cH)$. It is easy to see that $\tau$ is isometric, that is, 
$\|\tau(S)\|=\|S\|$ for all $S\in\cB(\cH)$, and that $\tau(I)=I$. On the other hand, if $S\in\cB(\cH)^+$
then 
\begin{equation*}
\langle \tau(S)h,h\rangle=\langle JSJh,h\rangle=\langle Jh,SJh\rangle=\langle 
SJh,Jh\rangle\geq 0,\quad h\in\cH,
\end{equation*} hence $\tau$ is positive. Let us also observe that, with respect to the matrix 
representation of operators in $\cB(\cH)$
associated to the orthonormal basis $\{e_j\}_{j\in\cJ}$, $\tau$ is the \emph{transpose 
map}: if $T$ has the matrix representation $[t_{i,j}]_{i,j\in\cJ}$ then $\tau(T)$ has the matrix 
representation $[t_{j,i}]_{j,i\in\cJ}$.

We claim now that $\tau$ leaves $\cB_1(\cH)$ invariant and the corresponding restriction map
$\cB_1(\cH)\ra\cB_1(\cH)$ is stochastic. To see this, we first observe that
if $T\in\cB_1(\cH)^+$ we 
have $\tau(T)\in\cB_1(\cH)^+$, e.g.\ using that $\tau$ is the 
transpose map with respect to the matrix representations of operators in $\cB_1(\cH)$ associated
to the orthonormal basis $\{e_j\}_{j\in\cJ}$, and the definition of the trace in terms of any 
orthonormal basis of $\cH$. Also, $\|\tau(T)\|_1=\tr(\tau(T))=\tr(T)=\|T\|_1$. 
Since any operator $T\in\cB_1(\cH)$ is a linear combination of four positive trace-class operators, 
the claim follows.

Finally, we show that $\tau$ is not completely positive, more precisely, it is not $2$-positive. To see
this, we consider the \emph{matrix units} $\{E_{i,j}\}_{i,j\in\cJ}$, that is, for any 
$i,j\in\cJ$, $E_{i,j}$ denote  
the rank $1$ operator on $\cH$ with $E_{i,j}e_j=e_i$ and $E_{i,j}e_k=0$ for all 
$k\neq j$ and observe that $\tau(E_{i,j})=E_{j,i}$. 
Since $\dim\cH\geq 2$, there exist $i,j\in\cJ$ with $i\neq j$. 
Then, consider the positive finite rank operator in $M_2(\cB_1(\cH))$ defined by
\begin{equation*}E=\left[\begin{matrix} E_{i,i} & E_{i,j} \\ E_{j,i} & E_{j,j}\end{matrix}\right]
\end{equation*}
and observe that 
\begin{equation*} \tau_2(E)=\left[\begin{matrix} \tau(E_{i,i}) & \tau(E_{i,j}) \\ \tau(E_{j,i}) & 
\tau(E_{j,j})\end{matrix}\right]=\left[\begin{matrix} E_{i,i} & E_{j,i} \\ E_{i,j} & E_{j,j}\end{matrix}\right]
\end{equation*} which is not positive, e.g.\ see \cite{Paulsen}, p.~5. Therefore, 
$\tau$ is a stochastic map but not a quantum operation.
\end{example}

\begin{remarks}\label{r:cex} (1) 
By means of the matrix transpose interpretation of 
$\tau$ as in Example~\ref{ex:stochastic}, 
it follows easily that its dual $\tau^\sharp\colon\cB(\cH)\ra\cB(\cH)$ has the same 
formal definition: $\tau(S)=JS^*J$, for all $S\in\cB(\cH)$, and the same matrix transpose 
interpretation with respect to a fixed orthonormal basis of $\cH$.

(2) The stochastic map $\tau$ described in Example~\ref{ex:stochastic} is 
invertible, $\tau^{-1}=\tau$, and antimultiplicative, that is, $\tau(ST)=\tau(T)\tau(S)$ for all 
$S,T\in\cB_1(\cH)$. The same properties are shared by its dual $\tau^\sharp$. 
In particular, both $\tau$ and $\tau^\sharp$ are $*$-antihomomorphisms.
 
(3) In addition to the map $\tau$ described in Example~\ref{ex:stochastic}, 
many other stochastic maps that are not quantum operations can be obtained by considering 
convex combinations of linear maps of type $\tau\circ\Psi$ or $\Psi\circ\tau$, 
where $\Psi$ are quantum operations.
\end{remarks}

\subsection{Discrete Semigroups of  Stochastic/Markov Maps.}\label{ss:dsqso}
From the quantum measurements point of view, given
a quantum operation $\Psi$,
it is of interest to characterise those elements $A\in\cB(\cH)$ with the property that $[\Psi,M_A]=0$,
that is, $\Psi(A^*XA)=A^*\Psi(X)A$ for all $X\in\cB_1(\cH)$, where 
$M_A\colon\cB_1(\cH)\ra\cB_1(\cH)$ denotes the one-element 
measurement, that is, the linear map $M_A(X)=A^*XA$ for all $X\in \cB_1(\cH)$ and the 
commutator is defined as usually 
$[\Phi,\Psi]=\Phi\Psi-\Psi\Phi$. Note that, since $\cB_1(\cH)$ is a two-sided ideal of $\cB(\cH)$, 
$M_A$ can be defined either as a linear map $\cB_1(\cH)\ra\cB_1(\cH)$ or as a linear map 
$\cB(\cH)\ra\cB(\cH)$. Actually, if we consider $M_A\colon\cB_1(\cH)\ra\cB_1(\cH)$
then its dual map $M_A^\sharp \colon\cB(\cH)\ra\cB(\cH)$ is the one-element
measurement map $M_{A^*}$.

A sequence $\{\Psi_n\}_{n\geq 0}$ is called a \emph{discrete  stochastic semigroup}
if
\begin{itemize}
\item[(qs1)] $\Psi_n\colon\cB_1(\cH)\ra\cB_1(\cH)$ is a  stochastic operator for all
integer $n\geq 0$.
\item[(qs2)] $\Psi_{n+m}=\Psi_n\Psi_m$ for all integer $m,n\geq 0$.
\item[(qs3)] $\Psi_0=I$.
\end{itemize}
Clearly, to any  stochastic operator $\Psi$ one associates the discrete
semigroups $\{\Psi^n\}_{n\geq 0}$ and, conversely, any discrete semigroup of  
stochastic operators $\{\Psi_n\}_{n\geq 0}$ is fully determined by $\Psi=\Psi_1$ and 
$\Psi_n=\Psi^n$, for all integer $n\geq 0$. Consequently, the analysis of discrete  
stochastic semigroups pertains to the analysis of one  stochastic operator.

\begin{remark}\label{r:ma} Let $\Psi\colon\cB_1(\cH)\ra\cB_1(\cH)$ be a bounded linear map and 
$A\in\cB(\cH)$. Then $[\Psi,M_A]=0$ if and only if $[\Psi^\sharp,M_{A^*}]=0$.
\end{remark}

The one-element measurement operator $M_A$ is
usually associated to a positive operator $A$. In this case, one rather considers 
the one-element measurement in the L\"uders form $M_{A^{1/2}}$ for some positive operator $A$.
We show that the following theorem, obtained in \cite{Bartoszek2}, can be recovered 
as a rather direct application of Corollary~\ref{c:unu}.

\begin{theorem}[\cite{Bartoszek2}]\label{t:bartoszek}
Let $\Psi$ be a  stochastic map on the Hilbert space $\cH$ and $A\in\cB(\cH)^+$.
The following assertions are equivalent:
\begin{itemize}
\item[(i)] $[\Psi,M_{A^{1/2}}]=0$, that is, $\Psi(A^{1/2}TA^{1/2})=A^{1/2}\Psi(T)A^{1/2}$ for all $T\in\cB_1(\cH)$.
\item[(ii)] $[\Psi^\sharp,M_{A^{1/2}}]=0$, that is, $\Psi^\sharp(A^{1/2}SA^{1/2})=A^{1/2}
\Psi^\sharp(S)A^{1/2}$ for all 
$S\in\cB(\cH)$.
\item[(iii)] $\Psi^\sharp(A)=A$ and $\Psi^\sharp(A^2)=A^2$.
\end{itemize}
\end{theorem}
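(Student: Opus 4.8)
The plan is to prove the equivalences through the bridge provided by Corollary~\ref{c:unu}, applied to the dual Markov map $\Psi^\sharp$, together with Remark~\ref{r:ma} which translates commutation statements between the two pictures. The central observation is that $A^{1/2}$ is a \emph{positive} (hence normal, hence selfadjoint) element of $\cB(\cH)$, so the unital commutative $C^*$-algebra $C^*(I,A^{1/2})$ it generates is exactly the setting where positivity alone yields local complete positivity. Since $\Psi^\sharp$ is a unital positive (Markov) map on $\cB(\cH)$, Corollary~\ref{c:unu} will apply with $a=A^{1/2}$.

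First I would dispose of (i)$\LRa$(ii) using Remark~\ref{r:ma}: applying it with the positive (hence selfadjoint) operator $A^{1/2}$ in place of $A$ gives $[\Psi,M_{A^{1/2}}]=0$ if and only if $[\Psi^\sharp,M_{(A^{1/2})^*}]=0=[\Psi^\sharp,M_{A^{1/2}}]$, since $(A^{1/2})^*=A^{1/2}$. This is the cleanest link and requires no new computation beyond invoking the remark. Next I would attack the substantive equivalence (ii)$\LRa$(iii) through Corollary~\ref{c:unu} applied to $\Phi=\Psi^\sharp$ and $a=A^{1/2}$. For the direction (iii)$\Ra$(ii): assertion (iii) says $\Psi^\sharp(A)=A$ and $\Psi^\sharp(A^2)=A^2$, which in the notation $a=A^{1/2}$ reads $\Psi^\sharp(a^2)=a^2$ and $\Psi^\sharp(a^4)=a^4$. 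The obstacle here is matching the exact hypotheses of Corollary~\ref{c:unu}, which asks for $\Phi(a)=a$ and $\Phi(a^*a)=a^*a$, i.e.\ $\Psi^\sharp(A^{1/2})=A^{1/2}$ and $\Psi^\sharp(A)=A$. The fixation of $A^{1/2}$ itself is \emph{not} literally among the hypotheses of (iii), so the key step is to show that fixing $A$ and $A^2$ already forces fixing $A^{1/2}$, and more: it forces $C^*(I,A^{1/2})\subseteq\cB(\cH)^{\Psi^\sharp}$.

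I expect this to be the main obstacle, and I would resolve it as follows. Because $A$ is a normal (selfadjoint) fixed point of the unital positive map $\Psi^\sharp$ with $A^2$ also fixed, Corollary~\ref{c:unu} applied to the normal element $a=A$ gives that the entire commutative $C^*$-algebra $C^*(I,A)$ is fixed pointwise by $\Psi^\sharp$. Since $A^{1/2}$, and indeed every element of $C^*(I,A^{1/2})$, lies in $C^*(I,A)$ (for a positive operator $A$ one has $C^*(I,A^{1/2})=C^*(I,A)$, both being the continuous functions of the single positive operator $A$ vanishing appropriately), it follows that $A^{1/2}$ is fixed and that $\Psi^\sharp(b)=b$ for all $b\in C^*(I,A^{1/2})$. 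In particular $A^{1/2}\in\cB(\cH)^{\Psi^\sharp}\cap\cM_{\Psi^\sharp}$, so by Theorem~\ref{t:choi}.(2) the Bimodule Property holds on $C^*(I,A^{1/2})$, giving $\Psi^\sharp(A^{1/2}SA^{1/2})=A^{1/2}\Psi^\sharp(S)A^{1/2}$ for all $S$, which is (ii). For the reverse direction (ii)$\Ra$(iii), setting $S=I$ in (ii) yields $\Psi^\sharp(A)=A^{1/2}\Psi^\sharp(I)A^{1/2}=A^{1/2}IA^{1/2}=A$ by unitality, and setting $S=A$ and using $\Psi^\sharp(A)=A$ yields $\Psi^\sharp(A^2)=A^{1/2}A A^{1/2}=A^2$, which is (iii). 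The only care needed is the identification $C^*(I,A^{1/2})=C^*(I,A)$, which follows from the Stone--Weierstrass theorem since both algebras equal the uniform closure of polynomials in the single selfadjoint generator.
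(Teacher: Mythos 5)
Your treatment of (i)$\LRa$(ii) via Remark~\ref{r:ma} and of (ii)$\Ra$(iii) by setting $S=I$ and then $S=A$ coincides with the paper's proof and is fine. The gap is in (iii)$\Ra$(ii), at the final step. You correctly deduce from Corollary~\ref{c:unu} (applied to the normal element $A$) that $\Psi^\sharp$ fixes all of $C^*(I,A)=C^*(I,A^{1/2})$, hence that $A^{1/2}\in\cB(\cH)^{\Psi^\sharp}\cap\cM_{\Psi^\sharp}$ in the sense of the defining identities \eqref{e:md}. But you then invoke Theorem~\ref{t:choi}.(2) to conclude the bimodule identity $\Psi^\sharp(A^{1/2}SA^{1/2})=A^{1/2}\Psi^\sharp(S)A^{1/2}$ for \emph{all} $S\in\cB(\cH)$. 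Theorem~\ref{t:choi} is stated (and is only true) for completely positive maps --- the paper notes that the Multiplicativity Property already needs $4$-positivity --- whereas here $\Psi$ is merely a stochastic map, so its dual $\Psi^\sharp$ is only a positive unital ($w^*$-continuous) map; Example~\ref{ex:stochastic} (the transpose) shows such maps need not be $2$-positive. The only complete positivity you have is of the restriction $\Psi^\sharp|_{C^*(I,A)}$, coming from commutativity of that subalgebra, and that restriction can propagate the bimodule identity only for $S$ ranging in $C^*(I,A)$ (where it is vacuous, since everything there is fixed), not for arbitrary $S\in\cB(\cH)$. This is exactly the ``intrinsic deficiency'' of Theorem~\ref{t:unu} that the paper points out just before Theorem~\ref{t:doi}.

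The paper closes this gap by using the hypothesis you never touch: trace preservation of $\Psi$. From $C^*(I,A)\subseteq\cB(\cH)^{\Psi^\sharp}$ one passes by $w^*$-continuity to $W^*(A)\subseteq\cB(\cH)^{\Psi^\sharp}$, so every spectral projection $E$ of $A$ is fixed; then Lemma~\ref{l:bartoszek1} (a trace argument) and Lemma~\ref{l:bartoszek2} (positivity plus duality) yield $\Psi^\sharp(ESE)=E\Psi^\sharp(S)E$ for all $S\in\cB(\cH)$, and the Spectral Theorem upgrades this from projections to $\Psi^\sharp(f(A)Sf(A))=f(A)\Psi^\sharp(S)f(A)$ for continuous $f$, in particular $f(t)=\sqrt{t}$. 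Your argument would be complete (and genuinely shorter) if $\Psi$ were assumed to be a quantum operation, i.e.\ completely positive --- that is the situation of Theorem~\ref{t:doi} and of Section~\ref{s:cdqs} --- but as written it does not prove the stated theorem for general stochastic maps.
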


Before proceeding to the proof of this theorem,
we prove two preliminary results. The first one is essentially
Remark~5.4 in \cite{Bartoszek2} for which we provide a coordinate free proof.

\begin{lemma}\label{l:bartoszek1}
If $E$ is a projection and $C\in\cB_1(\cH)^+$ such that $\tr(C)=\tr(ECE)$ then $C=CE=EC$.
\end{lemma}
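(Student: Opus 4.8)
The plan is to reduce the hypothesis to the vanishing of the trace of a single positive operator and then to invoke faithfulness of the trace. First I would set $F=I-E$, which is again a projection, and simplify the right-hand side of the hypothesis. Since $E$ is a selfadjoint idempotent and the trace on $\cB_1(\cH)$ is cyclic (all relevant products stay in $\cB_1(\cH)$ because $\cB_1(\cH)$ is a two-sided ideal of $\cB(\cH)$), I get $\tr(ECE)=\tr(CE^2)=\tr(CE)$. Hence the hypothesis $\tr(C)=\tr(ECE)$ is equivalent to $\tr\bigl(C(I-E)\bigr)=\tr(CF)=0$.

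Next I would exploit positivity of $C$. Writing $C=C^{1/2}C^{1/2}$ with $C^{1/2}\in\cB(\cH)^+$ and using cyclicity together with $F=F^*=F^2$, I can rewrite
\begin{equation*}
\tr(CF)=\tr(FCF)=\tr\bigl(FC^{1/2}C^{1/2}F\bigr)=\tr\bigl((C^{1/2}F)^*(C^{1/2}F)\bigr).
\end{equation*}
The operator $(C^{1/2}F)^*(C^{1/2}F)$ is positive and trace-class, and the computation above shows its trace is zero.

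The key point is faithfulness of the trace: a positive trace-class operator whose trace vanishes is the zero operator. This forces $(C^{1/2}F)^*(C^{1/2}F)=0$, and then the $C^*$-identity $\|C^{1/2}F\|^2=\|(C^{1/2}F)^*(C^{1/2}F)\|=0$ gives $C^{1/2}F=0$. Multiplying on the left by $C^{1/2}$ yields $CF=C^{1/2}(C^{1/2}F)=0$, that is $C(I-E)=0$, equivalently $C=CE$. Finally, since $C=C^*$ and $E=E^*$, taking adjoints in $C=CE$ gives $C=EC$, so $C=CE=EC$, as claimed.

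I do not expect a serious obstacle here; the only points needing care are checking that every product appearing in the trace manipulations lies in $\cB_1(\cH)$ so that cyclicity of the trace applies, and the explicit invocation of faithfulness of the trace to pass from $\tr\bigl((C^{1/2}F)^*(C^{1/2}F)\bigr)=0$ to $C^{1/2}F=0$. Both are standard facts about the trace on $\cB_1(\cH)$ recalled at the start of Section~\ref{s:dssm}.
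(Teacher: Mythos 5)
Your proof is correct and uses essentially the same mechanism as the paper's: reduce the hypothesis to the vanishing of the trace of a positive operator of the form $X^*X$ with $X=C^{1/2}(I-E)$ (the paper uses the mirror image $X=(I-E)C^{1/2}$ via the inequality $C^{1/2}EC^{1/2}\leq C$), invoke faithfulness of the trace to get $X=0$, and conclude by taking adjoints. The only cosmetic difference is that you sandwich $C$ between copies of $F=I-E$ while the paper sandwiches $E$ between copies of $C^{1/2}$; both routes are equally valid.
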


\begin{proof} Taking into account that $C^{1/2}EC^{1/2}\leq C$ and that
\begin{equation*}
0\leq \tr(C-C^{1/2}EC^{1/2})=\tr(C)-\tr(C^{1/2}EC^{1/2})=\tr(C)-\tr(ECE)=0,
\end{equation*} it follows that $C=C^{1/2}EC^{1/2}$
hence, 
\begin{equation*}0=C^{1/2}(I-E)C^{1/2}=C^{1/2}(I-E)(I-E)C^{1/2}=((I-E)C^{1/2})^* ((I-E)C^{1/2}),
\end{equation*} 
which implies $(I-E)C^{1/2}=0$ hence $(I-E)C=0$. From here it follows $EC=C$ and then taking 
adjoints we have $CE=C$ as well.
\end{proof}

The second preliminary result is a short-cut of Corollary~5.5, Corollary~5.6, and
Lemma~5.7 in \cite{Bartoszek2}.

\begin{lemma}\label{l:bartoszek2}
Let $\Psi$ be a  stochastic map with respect to a Hilbert space $\cH$ 
and let $E$ be a projection such that $\Psi^\sharp(E)=E$. Then
\begin{itemize}
\item[(i)] $\Psi(ETE)=E\Psi(ETE)=\Psi(ETE)E$ for all $T\in\cB_1(\cH)$.
\item[(ii)] $E\Psi^\sharp(ESE)=\Psi^\sharp(ESE)E=\Psi^\sharp(ESE)$ for all $S\in\cB(\cH)$.
\item[(iii)] $\Psi^\sharp(ESE)=E\Psi^\sharp(S)E$ for all $S\in\cB(\cH)$.
\end{itemize}
\end{lemma}

\begin{proof} (i) It is sufficient to prove this for all $T\in\cB_1(\cH)^+$. With this assumption,
we have
\begin{align*} \tr(E\Psi(ETE)E) & =\tr(E\Psi(ETE)) 
=\langle E,\Psi(ETE)\rangle\\ 
& = \langle \Psi^\sharp(E),ETE\rangle=\langle E, ETE\rangle=\tr(ETE)=\tr(\Psi(ETE)),
\end{align*}
and, consequently, applying Lemma~\ref{l:bartoszek1} for $C=\Psi(ETE)$, the conclusion follows.

(ii) To see this, without loss of generality it is sufficient to assume that $S\in\cB(\cH)^+$ 
is a contraction, that is, $0\leq S\leq I$. Then $0\leq ESE\leq E$ hence 
$0\leq \Psi^\sharp(ESE)\leq \Psi^\sharp(E)=E$, which implies that the range of $\Psi^\sharp(ESE)$ 
is contained in the range of $E$. This implies $E\Psi^\sharp(ESE)=\Psi^\sharp(ESE)$ and then, by
taking adjoints, we have $\Psi^\sharp(ESE)E=\Psi^\sharp(ESE)$ as well.

(iii) Let $T\in\cB_1(\cH)$ and $S\in\cB(\cH)$ be arbitrary. Using assertion (ii) we have
\begin{align*}
\langle \Psi^\sharp(ESE),T\rangle & = \langle E\Psi^\sharp(ESE)E,T\rangle = \langle \Psi^\sharp(ESE),ETE\rangle \\
& = \langle ESE,\Psi(ETE)\rangle = \langle S,E\Psi(ETE)E\rangle \\
\intertext{and then, using assertion (i), we have}
& = \langle S,\Psi(ETE)\rangle =\langle \Psi^\sharp(S),ETE\rangle=\langle E\Psi^\sharp(S)E,T
\rangle,
\end{align*}
hence assertion (iii) follows.
\end{proof}

\begin{proof}[Proof of Theorem~\ref{t:bartoszek}]
(i)$\LRa$(ii). This is a consequence of Remark~\ref{r:ma}.

(ii)$\Ra$(iii). Since $\Psi^\sharp$ is unital it follows that $\Psi^\sharp(A)=\Psi^\sharp(A^{1/2}IA^{1/2})
=A^{1/2}\Psi^\sharp(I)A^{1/2}=A^{1/2}A^{1/2}=A$ and then $\Psi^\sharp(A^2)
=\Psi^\sharp(A^{1/2}AA^{1/2})=A^{1/2}\Psi^\sharp(A)A^{1/2}=A^{1/2}AA^{1/2}=A^2$.

(iii)$\Ra$(ii). Letting $\Psi^\sharp=\Phi$ in Corollary~\ref{c:unu}, it follows that $\Psi^\sharp(S)=S$ 
for all $S\in C^*(I,A)$. Since $\Psi^\sharp$ is $w^*$-continuous, 
by functional calculus with bounded Borel functions on $\sigma(A)$, it follows that 
$\Psi^\sharp(S)=S$ for all $S\in W^*(A)$, the von Neumann algebra generated by 
$A$ in $\cB(\cH)$. In particular, for any spectral projection $E$ of $A$ we have $\Psi^\sharp(E)=E$.
From Lemma~\ref{l:bartoszek2} it follows
\begin{equation}\label{e:pes}
\Psi^\sharp(ESE)=E\Psi^\sharp(S)E,\quad S\in\cB(\cH).
\end{equation}
From here, by the Spectral Theorem for $A$, it follows that
for any function $f$ that is continuous on $\sigma(A)$
we have
\begin{equation}
\Psi^\sharp(f(A)Sf(A))=f(A)\Psi^\sharp(S)f(A),\quad S\in\cB(\cH).
\end{equation} 
Letting $f(t)=g(t)=\sqrt{t}$, $t\in\sigma(A)$, the assertion follows.
\end{proof}

\begin{remarks}\label{r:general}
(1) Under the assumptions of Theorem~\ref{t:bartoszek}, from the proof provided here 
and Remark~\ref{r:ma},
one can easily obtain the following assertions that are mutually equivalent with each of assertions 
(i)--(iii), cf.\ \cite{Bartoszek2}:
\begin{itemize}
\item[(iv)] $[M_E,\Psi]=0$ for any spectral projection $E$ of $A$.
\item[(v)] $[M_E,\Psi^\sharp]=0$ for any spectral projection $E$ of $A$.
\item[(vi)] $[M_{f(A)},\Psi]=0$ for any real function $f$ continuous on $\sigma(A)$.
\item[(vii)] $[M_{f(A)},\Psi^\sharp]=0$ for any real function $f$ continuous on $\sigma(A)$.
\end{itemize}

(2) The mutually equivalent assertions as in Theorem~\ref{t:bartoszek} can be, equivalently,
written in terms of the discrete dynamical system $\{\Psi^n\}_{n\in\NN_0}$:
\begin{itemize}
\item[(i)] $[\Psi^n,M_{A^{1/2}}]=0$ for all $n\in\NN_0$.
\item[(ii)] $[\Psi^{\sharp n},M_{A^{1/2}}]=0$ for all $n\in\NN_0$.
\item[(iii)] $\Psi^{\sharp n}(A)$ and $\Psi^{\sharp n}(A^2)$ do not depend on $n\in\NN_0$.
\end{itemize}
This way, assertions (i) and (ii) are symmetry properties while assertion (iii) is a conservation law.

(3) A natural question related to Theorem~\ref{t:bartoszek} is whether the latter condition 
in item (iii) on $A^2$ being fixed by $\Psi^\sharp$ 
is really necessary for a given  stochastic map $\Psi$. 
It is interesting that, for the transpose
map $\tau$ as in Example~\ref{ex:stochastic} the answer is no. More precisely, let $A\in\cB(\cH)^+$
be a fixed point of $\tau^\sharp$. Since $A$ is positive and taking into account
that $\tau^\sharp$ is antimultiplicative, see Remark~\ref{r:cex}.(2), it follows that 
$\tau^\sharp(A^2)=\tau^\sharp(A)\tau^\sharp(A)=A^2$. However, 
the answer to this question is positive, in general, for quantum operations, see 
Remark~\ref{r:cexq}.(2), and hence for stochastic maps as well.
\end{remarks}

\subsection{Continuous One-Parameter Semigroups of  Stochastic/Markov Maps.}
\label{ss:sopssm}
With notation as in the previous section, we consider a strongly continuous one-parameter 
semigroup $\mathbf{\Psi}=\{\Psi_t\}_{ t\geq 0}$ of stochastic maps with respect to some Hilbert 
space $\cH$.
Under these assumptions, we observe that $\{\Psi_t\}_{t\geq 0}$ is uniformly bounded on 
$\cB_1(\cH)$. Most of the following facts that we briefly recall refer to a particular situation 
of the general theory of one-parameter semigroup theory on Banach spaces, 
e.g.\ see \cite{HillePhillips} and \cite{DunfordSchwartz},
see Subsection~\ref{ss:fpw}.
Given a strongly continuous semigroup $\mathbf{\Psi}=\{\Psi_t\}_{t\geq 0}$ of stochastic maps with respect
to some Hilbert space $\cH$,
the \emph{infinitesimal generator} $\psi$ 
exists as a densely defined closed operator on $\cB_1(\cH)$.
For every strongly continuous one-parameter 
semigroup $\mathbf{\Psi}=\{\Psi_t\}_{ t\geq 0}$ of stochastic maps, the dual one-parameter 
semigroup $\mathbf{\Psi}^\sharp=\{\Psi_t^\sharp\}_{t\geq 0}$ of Markov maps exists, that is, 
\begin{equation}\label{e:dess}
\langle \Psi_t(T),S\rangle=\tr(\Psi_t(T)S)=\tr(T\Psi_t^\sharp(S))=\langle T,\Psi_t^\sharp(S)\rangle,
\ T\in\cB_1(\cH),\ S\in\cB(\cH),\ t\geq 0.
\end{equation} 
Then
$\{\Psi_t^\sharp\}_{t\geq 0}$ is a $w^*$-continuous semigroup of contractions on 
$\cB(\cH)$ and hence,
the \emph{$w^*$-infinitesimal generator} $\psi^\sharp$ 
exists as a $w^*$-closed operator on $\cB(\cH)$, hence a closed operator on $\cB(\cH)$.
The $w^*$-infinitesimal generator $\psi^\sharp$ of the dual $w^*$-continuous
semigroup $\{\Psi^\sharp_t\}_{t\geq 0}$ of Markov maps
is indeed the dual operator of the infinitesimal generator $\psi$ of the strongly continuous 
semigroup $\{\Psi_t\}_{t\geq 0}$ of stochastic maps and, consequently, the notation for 
$\psi^\sharp$ is fully justified.

Also, let us observe that, since $\Psi^\sharp_t(I)=I$, it follows that
\begin{equation}\label{e:iep}
I\in\dom(\psi^\sharp)\mbox{ and }\psi^\sharp(I)=0.
\end{equation}
In addition, one of the major differences between the two infinitesimal generators 
$\psi$ and $\psi^\sharp$ is that $\dom(\psi^\sharp)$ may not be dense in $\cB(\cH)$, although it is 
always $w^*$-dense, while $\dom(\psi)$ is always dense in $\cB_1(\cH)$.

The equivalence of (i)--(iv) in the following theorem has been obtained in \cite{Bartoszek2}. We
add two more equivalent characterisations in terms of the dual infinitesimal generator,
which actually make the proofs simpler.

\begin{theorem}\label{t:bartoszek2}
Let $\mathbf{\Psi}=\{\Psi_t\}_{t\geq 0}$ be a strongly continuous one-parameter semigroup of 
stochastic maps on $\cB_1(\cH)$, $\psi$ its infinitesimal generator, 
and let $A\in\cB(\cH)^+$. With notation as before, the following assertions 
are equivalent:
\begin{itemize}
\item[(i)] $\Psi_t^\sharp(A)=A$ and $\Psi_t^\sharp(A^2)=A^2$ for all $t\geq 0$.
\item[(ii)] $[M_{A^{1/2}},\Psi_t]=0$ for all $t\geq 0$.
\item[(iii)] $[M_{A^{1/2}},\Psi_t^\sharp]=0$ for all $t\geq 0$.
\item[(iv)] $[M_{A^{1/2}},\psi]=0$ that is, for all $T\in\dom(\psi)$ we have $A^{1/2}TA^{1/2}\in
\dom(\psi)$ and $\psi(A^{1/2}TA^{1/2})=A^{1/2}\psi(T)A^{1/2}$.
\item[(v)] $[M_{A^{1/2}},\psi^\sharp]=0$ that is, for all $S\in\dom(\psi^\sharp)$ 
we have $A^{1/2}SA^{1/2}\in\dom(\psi^\sharp)$ and 
$\psi^\sharp(A^{1/2}TA^{1/2})=A^{1/2}\psi^\sharp(T)A^{1/2}$.
\item[(vi)] $A,A^2\in\ker(\psi^\sharp)$, that is, $A,A^2\in\dom(\psi^\sharp)$ and $\psi^\sharp(A)=
\psi^\sharp(A^2)=0$.
\end{itemize} 
\end{theorem}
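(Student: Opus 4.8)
The plan is to reduce the six assertions to the single-map results already in hand, inserting the two generator-level statements by a standard differentiation/resolvent argument. First I would dispose of the ``for all $t$'' equivalences among (i), (ii), (iii), and (vi). For each fixed $t\geq 0$ the map $\Psi_t$ is a stochastic map, so Theorem~\ref{t:bartoszek} applied to $\Psi=\Psi_t$ gives the mutual equivalence of $[M_{A^{1/2}},\Psi_t]=0$, of $[M_{A^{1/2}},\Psi_t^\sharp]=0$, and of $\Psi_t^\sharp(A)=A,\ \Psi_t^\sharp(A^2)=A^2$; quantifying over $t$ yields (i)$\LRa$(ii)$\LRa$(iii). (Alternatively (ii)$\LRa$(iii) is immediate from Remark~\ref{r:ma}, since $A^{1/2}$ is selfadjoint and hence $M_{A^{1/2}}^\sharp=M_{A^{1/2}}$.) For the link with the kernel of the dual generator I would apply Theorem~\ref{t:dual} separately to $f=A$ and to $f=A^2$: the condition $\Psi_t^\sharp(A)=A$ for all $t$ is exactly $A\in\ker(\psi^\sharp)$, and likewise for $A^2$, so (i)$\LRa$(vi).

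It remains to bring in the generators. For (ii)$\LRa$(iv) I would use that $M_{A^{1/2}}$ is a bounded operator on $\cB_1(\cH)$. If (ii) holds, then for $T\in\dom(\psi)$ one differentiates the identity $\Psi_t(M_{A^{1/2}}T)=M_{A^{1/2}}\Psi_t(T)$ at $t=0+$, passing the limit inside by continuity of $M_{A^{1/2}}$, obtaining $M_{A^{1/2}}T\in\dom(\psi)$ and $\psi(M_{A^{1/2}}T)=M_{A^{1/2}}\psi(T)$. Conversely, if (iv) holds then $M_{A^{1/2}}$ commutes with the resolvent $(\lambda I-\psi)^{-1}$ for $\lambda>\omega$, and the exponential representation~\eqref{e:psite}, together with the continuity of $M_{A^{1/2}}$, propagates this commutation to every $\Psi_t$ on the dense domain $\dom(\psi)$, hence to all of $\cB_1(\cH)$.

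For the dual generator I would only establish (iii)$\Ra$(v) by the same differentiation, now using that $M_{A^{1/2}}$ is $w^*$-continuous (normal) on $\cB(\cH)$ and that $\psi^\sharp$ is defined by $w^*$-limits as in~\eqref{e:genp}: for $S\in\dom(\psi^\sharp)$ one passes $M_{A^{1/2}}$ through the $w^*$-limit of the difference quotients to get $M_{A^{1/2}}S\in\dom(\psi^\sharp)$ and $[M_{A^{1/2}},\psi^\sharp]=0$. Finally (v)$\Ra$(vi) is a direct computation: since $I\in\dom(\psi^\sharp)$ and $\psi^\sharp(I)=0$ by~\eqref{e:iep}, applying (v) with $S=I$ gives $A=A^{1/2}IA^{1/2}\in\dom(\psi^\sharp)$ and $\psi^\sharp(A)=A^{1/2}\psi^\sharp(I)A^{1/2}=0$; applying (v) once more with $S=A$ gives $A^2\in\dom(\psi^\sharp)$ and $\psi^\sharp(A^2)=A^{1/2}\psi^\sharp(A)A^{1/2}=0$. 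Chaining (iii)$\Ra$(v)$\Ra$(vi)$\LRa$(i)$\LRa$(iii) closes the loop, so (v) joins the other five.

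The main obstacle is the dual side: because $\dom(\psi^\sharp)$ is only $w^*$-dense and $\{\Psi_t^\sharp\}_{t\geq 0}$ is merely $w^*$-continuous, the clean resolvent/exponential converse used for (iv)$\Ra$(ii) is not directly available for proving (v)$\Ra$(iii). My plan deliberately sidesteps this by routing (v)$\Ra$(iii) through the cheap implications (v)$\Ra$(vi)$\LRa$(iii), so that I never need to run the exponential formula on the dual semigroup; on the dual side I only ever differentiate, which is legitimate by the $w^*$-continuity of $M_{A^{1/2}}$.
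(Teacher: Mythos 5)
Your proof is correct, and its overall strategy coincides with the paper's: reduce (i)$\Leftrightarrow$(ii)$\Leftrightarrow$(iii) to Theorem~\ref{t:bartoszek} applied to each $\Psi_t$, obtain the generator statements by differentiating at $t=0+$, and use Theorem~\ref{t:dual} together with \eqref{e:iep} to tie in $\ker(\psi^\sharp)$. Where you genuinely diverge is in how (iv) and (v) are woven into the cycle. The paper runs a single chain (ii)$\Rightarrow$(iv)$\Rightarrow$(v)$\Rightarrow$(vi)$\Rightarrow$(i), and its step (iv)$\Rightarrow$(v) is a pure duality argument: it uses Phillips's characterisation \eqref{e:degen} of $\dom(\psi^\sharp)$ to show that the bimodule relation for $\psi$ forces $A^{1/2}SA^{1/2}\in\dom(\psi^\sharp)$ and then transfers the identity by taking adjoints. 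You instead derive (v) directly from (iii) by $w^*$-differentiating, exploiting the $w^*$-continuity of $S\mapsto A^{1/2}SA^{1/2}$; this is symmetric with your (ii)$\Rightarrow$(iv) step and avoids invoking \eqref{e:degen}, but it leaves (iv) dangling unless you supply a converse, which you do via the resolvent commutation and the exponential formula \eqref{e:psite} for (iv)$\Rightarrow$(ii) --- an argument the paper does not need at all, since its linear chain returns to (ii) through (vi) and (i). Both routes are sound; yours is somewhat more self-contained on each side of the duality (you only ever differentiate on the dual side, as you note, which correctly sidesteps the non-density of $\dom(\psi^\sharp)$), at the price of the extra resolvent lemma, while the paper's is shorter but leans on the Phillips duality machinery for the single step (iv)$\Rightarrow$(v).
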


\begin{proof} The equivalence of the assertions (i), (ii), and (iii) is a straightforward 
consequence of Theorem~\ref{t:bartoszek}.

(ii)$\Ra$(iv). For arbitrary $T\in\dom(\psi)$ and $t\geq 0$,
we have
\begin{align*}
\frac{\Psi_t(A^{1/2}TA^{1/2})-A^{1/2}TA^{1/2}}{t} 
& =\frac{A^{1/2}\Psi_t(T)A^{1/2}-A^{1/2}TA^{1/2}}{t}\\
& =A^{1/2}\frac{\Psi_t(T)-T}{t}A^{1/2}\xrightarrow[t\ra0+]{ } A^{1/2}\psi(T)A^{1/2},
\end{align*} hence $A^{1/2}TA^{1/2}\in\dom(\psi)$ and 
$\psi(A^{1/2}TA^{1/2})=A^{1/2}\psi(T)A^{1/2}$.

(iv)$\Ra$(v). Let $S\in\dom(\psi^\sharp)$. Then, for any $T\in\dom(\psi)$ we have 
$A^{1/2}TA^{1/2}\in\dom(\psi)$ and 
$\psi(A^{1/2}TA^{1/2})=A^{1/2}\psi(T)A^{1/2}$, hence
\begin{equation*}
\langle\psi(T),A^{1/2}SA^{1/2}\rangle=\langle A^{1/2}\psi(T)A^{1/2},S\rangle=
\langle\psi(A^{1/2}TA^{1/2}),S\rangle
\end{equation*}
whence, taking into account of the continuity of the map 
$\cB_1(\cH)\ni T\mapsto A^{1/2}TA^{1/2}\in\cB_1(\cH)$, it follows that 
$A^{1/2}SA^{1/2}\in \dom(\psi^\sharp)$. Consequently,
\begin{align*}
\langle T,\psi^\sharp(A^{1/2}SA^{1/2})\rangle & =\langle\psi(T),A^{1/2}SA^{1/2}\rangle 
=\langle\psi(A^{1/2}TA^{1/2}),S\rangle\\ & =\langle A^{1/2}TA^{1/2},\psi^\sharp(S)\rangle
=\langle T,A^{1/2}\psi^\sharp(S)A^{1/2}\rangle,
\end{align*} hence, 
$\psi^\sharp(A^{1/2}SA^{1/2})=A^{1/2}\psi^\sharp(S)A^{1/2}$.

(v)$\ra$(vi). By \eqref{e:iep} we have $A=A^{1/2}IA^{1/2}\in\dom(\psi^\sharp)$ and 
$\psi^\sharp(A)=A^{1/2}\psi^\sharp(I)A^{1/2}=0$. Then, $A^2=A^{1/2}AA^{1/2}\in\dom(\psi^\sharp)$ 
and $\psi^\sharp(A^2)=A^{1/2}\psi^\sharp(A)A^{1/2}=0$.

(vi)$\Ra$(i). This is a consequence of Theorem~\ref{t:dual}.
\end{proof}

\begin{remarks}\label{r:uta}
(a) Under the assumptions of Theorem~\ref{t:bartoszek2}, each of the assertions (i)--(vi)
is equivalent with each of the following assertions, cf.\ \cite{Bartoszek2}:
\begin{itemize}
\item[(vii)] $\frac{\de}{\de t}\langle \Psi_t(T),A\rangle=\frac{\de}{\de t}\langle \Psi_t(T),A^2\rangle=0$
for all $T\in\cB_1(\cH)$.
\item[(viii)] $\frac{\de}{\de t}\langle \Psi_t(T),A^n\rangle=0$ for all $T\in\cB_1(\cH)$ and all 
$n\geq 0$.
\item[(ix)] For every spectral projection $E$ of $A$ we have $[M_{A^{1/2}},\psi]=0$, that is, for 
any $T\in\dom(\psi)$ we have $ETE\in\dom(\psi)$ and $\psi(ETE)=E\psi(T)E$.
\end{itemize}
The equivalence of assertion (ix) is short-cut in our proof but it is an important step during the proof
provided in \cite{Bartoszek2}. Assertion (vii) is clearly equivalent with assertion (i), while 
assertion (viii) is equivalent with assertion (vii) in view of Corollary~\ref{c:unu}.

(b) A natural question is whether the condition that $A^2$ is a joint fixed point of $\mathbf{\Psi}$, 
as in Theorem~\ref{t:bartoszek2}.(i), is a consequence of the condition that $A$ 
is a joint fixed point of $\mathbf{\Psi}$. The answer is negative, in general, 
and it will be obtained as a consequence of Theorem~\ref{t:dqscc}.
\end{remarks}

\section{Dynamics for Markov Processes: The Real Commutative Case}\label{s:dmp}

In this section we consider the setting of dynamics of Markov processes in the framework 
of "stochastic mechanics" in the sense of \cite{BaezFong} and \cite{BaezBiamonte}. Let $(X;\mu)$
be a $\sigma$-finite measure space. A \emph{probability 
distribution} $p$ is an element in $L^1_\RR(X;\mu)$ which is positive and $\|p\|_1=1$.

An \emph{observable} $O$ is an element in $L^\infty_\RR(X;\mu)$, identified with the operator of 
multiplication $O\colon L^1_\RR(X;\mu)\ra L^1_\RR(X;\mu)$
\begin{equation*}
(Og)(x)=O(x)g(x),\quad g\in L^1_\RR(X;\mu),\ x\in X.
\end{equation*}
The \emph{expected value} of the observable $O$ 
with respect to a probability distribution $g$ is
\begin{equation*}
E(O;g)=\langle O,g\rangle=\int_X O(x)g(x)\de\mu(x),
\end{equation*}
the \emph{variance} of $O$ with respect to $g$ is
\begin{equation*}
V(O;g)=\langle O^2,g\rangle-\langle O,g\rangle^2,
\end{equation*}
while the \emph{standard deviation} of $O$ with respect to $g$ is
\begin{equation*}
\sigma(O;g)=\sqrt{\langle O^2,g\rangle-\langle O,g\rangle^2}.
\end{equation*}

A \emph{stochastic operator} is a bounded linear operator 
$U\colon L^1_\RR(X;\mu)\ra L^1_\RR(X;\mu)$ that
maps probability distributions to probability distributions, equivalently, 
$U$ is \emph{positive}, that is,
\begin{equation*}\mbox{ if }g\in L^1_\RR(X;\mu)\mbox{ and } g\geq 0\mbox{ then }Ug\geq 0,
\end{equation*}
and
\begin{equation*}
\int_X (Ug)(x)\de\mu(x)=\int_Xg(x)\de\mu(x),\quad\mbox{ for all }g\in L^1_\RR(X;\mu).
\end{equation*}
The latter condition can also be written as
\begin{equation*}
\langle 1,Ug\rangle =\langle 1,g\rangle,\quad g\in L^1_\RR(X;\mu).
\end{equation*}

A bounded linear operator $T\colon L^\infty_\RR(X;\mu)\ra L^\infty_\RR(X;\mu)$ is called a 
\emph{Markov map} if it is $w^*$-continuous, 
\emph{positive}, in the sense that for any $f\in L^\infty_\RR(X;\mu)$
with $f\geq 0$ it follows $Tf\geq 0$, and \emph{unital}, that is, $T1=1$. 

Given any bounded linear operator $U\colon L^1_\RR(X;\mu)\ra L^1_\RR(X;\mu)$ there exists its 
\emph{dual operator} $U^\sharp \colon L^\infty_\RR(X;\mu)\ra L^\infty_\RR(X;\mu)$, which is linear 
and bounded, defined by
\begin{align*}
\langle Ug,f\rangle & =\int_X (Ug)(x)f(x)\de\mu(x)=\int_X g(x) (U^\sharp f)(x)\de
\mu(x) \\
& =\langle g,U^\sharp f\rangle,\ f\in L^1_\RR(X;\mu),\ g\in L^\infty_\RR(X;\mu).
\end{align*}
In addition, $U^\sharp$ is $w^*$-continuous.
If $U\colon L^1_\RR(X;\mu)\ra L^1_\RR(X;\mu)$ is a stochastic operator then its dual $U^\sharp\colon 
L^\infty_\RR(X;\mu) \ra L^\infty_\RR(X;\mu)$ is a Markov operator.

\subsection{Discrete Stochastic/Markov Semigroups.}
A \emph{discrete stochastic semigroup} with respect to the measure space $(X;\mu)$
is a sequence $\{U_n\}_{n\geq 0}$  subject to the following
conditions:
\begin{itemize}
\item[(ms1)] $U_n\colon L^1_\RR(X;\mu)\ra L^1_\RR(X;\mu)$ is stochastic for all $n\geq 0$.
\item[(ms2)] $U_{n+m}=U_n U_m$ for all $n,m\geq 0$.
\item[(ms3)] $U_0=I$.
\end{itemize}
Clearly, any discrete stochastic semigroup is of the form
\begin{equation*}
U_n=U^n,\quad n\geq 0,
\end{equation*} where $U=U_1$ is a stochastic operator. Considering the dual
operator $U^\sharp\colon L^\infty_\RR(X;\mu)\ra L^\infty_RR(X;\mu)$, which is actually a Markov 
operator, we can equivalently discuss of \emph{discrete Markov semigroups}.

The equivalence of assertions (i), (ii), (i)$^\prime$, and (ii)$^\prime$
in the following theorem has been obtained in 
\cite{BaezFong}, for which we provide a proof based on the results in Subsection~\ref{ss:fppmcsa}, 
as well as complete their theorem with two more equivalent assertions in terms of duals of
stochastic operators. 

\begin{theorem}\label{t:dms}
Let $(X;\mu)$ be a $\sigma$-finite measure space, $U\colon L^1_\RR(X;\mu)\ra L^1_\RR(X;\mu)$ 
a stochastic 
operator and $O\in L^\infty_\RR(X;\mu)$ an observable. The following assertions are equivalent:
\begin{itemize}
\item[(i)] $[O,U]=0$.
\item[(ii)] For any probability distribution $g$ on $X$ we have $\langle O,Ug\rangle=\langle O,
g\rangle$ and $\langle O^2,Ug\rangle=\langle O^2,g\rangle$.
\item[(i)$^\prime$] $[O,U^n]=0$ for all $n\geq 0$.
\item[(ii)$^\prime$] For any probability distribution $g$ on $X$, the expected values of $O$ 
and $O^2$ with respect to $U^ng$ do not depend on $n\geq 0$.
\item[(i)$^{\prime\prime}$] $[O,U^\sharp]=0$.
\item[(ii)$^{\prime\prime}$] $U^\sharp(O)=O$ and $U^\sharp(O^2)=O^2$.
\end{itemize}
\end{theorem}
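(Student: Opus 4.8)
The plan is to route every assertion through the dual Markov operator $U^\sharp$ acting on the commutative algebra $L^\infty_\RR(X;\mu)$, where the multiplicative-domain results of Subsection~\ref{ss:fppmcsa} apply, keeping the two conditions $U^\sharp(O)=O$ and $U^\sharp(O^2)=O^2$ as the common hub. First I would set up a duality dictionary. Since $O$ is real, multiplication by $O$ is self-dual for the pairing $\langle g,f\rangle=\int_X gf\,\de\mu$; taking the dual of $OU-UO$ and using that this pairing is nondegenerate (indeed $L^\infty_\RR(X;\mu)$ is the dual of $L^1_\RR(X;\mu)$) gives (i)$\LRa$(i)$^{\prime\prime}$. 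The equivalence (i)$\LRa$(i)$^\prime$ is immediate, since $[O,U]=0$ propagates to $[O,U^n]=0$ by induction and the case $n=1$ recovers (i).

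Next I would translate the expected-value conditions into fixed-point conditions. From $\langle O,Ug\rangle=\langle U^\sharp(O),g\rangle$ and the fact that the probability distributions are total in $L^1_\RR(X;\mu)$ (their positive multiples exhaust the positive cone, hence span $L^1_\RR(X;\mu)$, which separates $L^\infty_\RR(X;\mu)$), condition (ii) is equivalent to $U^\sharp(O)=O$ together with $U^\sharp(O^2)=O^2$, that is, to (ii)$^{\prime\prime}$. For (ii)$^\prime$, the requirement that $\langle O,U^ng\rangle$ and $\langle O^2,U^ng\rangle$ be constant in $n$ forces equality with the $n=0$ values, so comparing $n=1$ with $n=0$ yields (ii)$^\prime\Ra$(ii); conversely (ii)$^{\prime\prime}$ gives $U^{\sharp n}(O)=O$ and $U^{\sharp n}(O^2)=O^2$, hence the desired constancy, so (ii)$^{\prime\prime}\Ra$(ii)$^\prime$.

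The core of the argument is then (i)$^{\prime\prime}\LRa$(ii)$^{\prime\prime}$. I would complexify and extend $U^\sharp$ complex-linearly to a positive, unital, hence (by Stinespring, since $L^\infty_\CC(X;\mu)$ is commutative) completely positive and, by Russo--Dye, contractive map. The forward implication is formal: commutation with $M_O$ plus unitality give $U^\sharp(O)=O\,U^\sharp(1)=O$ and $U^\sharp(O^2)=O\,U^\sharp(O)=O^2$. For (ii)$^{\prime\prime}\Ra$(i)$^{\prime\prime}$, since $O$ is selfadjoint, (ii)$^{\prime\prime}$ reads $U^\sharp(O^*O)=O^2=U^\sharp(O)^*U^\sharp(O)$ and $U^\sharp(OO^*)=O^2=U^\sharp(O)U^\sharp(O)^*$, so $O\in\cM_{U^\sharp}$, while $U^\sharp(O)=O$ places $O\in\cA^{U^\sharp}$. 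As $U^\sharp$ is completely positive and contractive, $\cA^{U^\sharp}\cap\cM_{U^\sharp}$ coincides with the bimodule domain $\cI(U^\sharp)$, whence $U^\sharp(Of)=O\,U^\sharp(f)$ and $U^\sharp(fO)=U^\sharp(f)\,O$ for every $f\in L^\infty_\CC(X;\mu)$, i.e.\ $[O,U^\sharp]=0$. Alternatively one may first apply Corollary~\ref{c:unu} (with $a=O$ normal) to fix all of $C^*(1,O)$, and then use Theorem~\ref{t:choi}.(2) to upgrade to commutation on the whole algebra.

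The one genuinely non-formal step, and the expected obstacle, is this last implication (ii)$^{\prime\prime}\Ra$(i)$^{\prime\prime}$: upgrading the fixed-point data on $O$ and $O^2$ to full commutation of $U^\sharp$ with multiplication by $O$ on all of $L^\infty$, rather than merely on $C^*(1,O)$. This is exactly what the identity $\cA^\Phi\cap\cM_\Phi=\cI(\Phi)$ provides, and it becomes available only after observing that positivity on the commutative algebra $L^\infty_\CC(X;\mu)$ is automatically complete positivity; I would therefore be careful that the real-to-complex passage makes $O$ a bona fide selfadjoint (hence normal) element of a unital $C^*$-algebra. Everything else reduces to bookkeeping with the $L^1$--$L^\infty$ duality.
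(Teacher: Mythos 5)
Your proposal is correct and follows essentially the same route as the paper: reduce everything to the dual Markov operator, complexify so that $L^\infty_\CC(X;\mu)$ becomes a commutative unital $C^*$-algebra on which positivity of the unital map $\widetilde U^\sharp$ upgrades to complete positivity via Stinespring, and then invoke the multiplicative-domain machinery (the paper cites Theorem~\ref{t:doi}, you cite the equivalent identity $\cA^\Phi\cap\cM_\Phi=\cI(\Phi)$) to pass from the fixed-point data on $O$ and $O^2$ to the bimodule property on all of $L^\infty$. The only cosmetic difference is that the paper complexifies $U$ on $L^1_\CC$ and then dualizes, whereas you complexify $U^\sharp$ directly; the substance is identical.
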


\begin{proof} The equivalences (i)$\LRa$(i)$^\prime$, (ii)$\LRa$(ii)$^\prime$, 
(i)$\LRa$(i)$^{\prime\prime}$, and (ii)$\LRa$(ii)$^{\prime\prime}$ are clear. 

(i)$^{\prime\prime}\Ra$(ii)$^{\prime\prime}$. Assume that $[O,U^\sharp]=0$  hence, for any 
$f\in L^\infty_\RR(X;\mu)$ we have $OU^\sharp (f)=U^\sharp (Of)$. 
Letting $f=1$ and taking into
account that $U^\sharp (1)=1$ it follows $U^\sharp(O)=O$ and then letting $f=O$ we have
$U^\sharp (O^2)=OU^\sharp(O)=O^2$.

(ii)$^{\prime\prime}\Ra$(i)$^{\prime\prime}$.
The spaces $L^1_\RR(X;\mu)$ and $L^\infty_\RR(X;\mu)$ are naturally embedded in
$L^1_\CC(X;\mu)$ and, respectively, in $L^\infty_\CC(X;\mu)$. The real stochastic operator $U$ 
can be naturally lifted to a complex stochastic operator 
$U\colon L^1_\CC(X;\mu)\ra L^1_\CC(X;\mu)$. More precisely, since 
\begin{equation*} L^1_\CC(X;\mu)=L^1_\RR(X;\mu)\oplus \iac L^1_\RR(X;\mu),\end{equation*}
we can define $\widetilde U\colon L^1_\CC(X;\mu)\ra L^1_\CC(X;\mu)$ by
\begin{equation*} \widetilde U(g+\iac f)=Ug+\iac Uf,\quad f,g\in L^1_\RR(X;\mu),
\end{equation*}
and observe that $\widetilde U$ has the following two properties:
\begin{equation*}\mbox{ if }g\in L^1_\CC(X;\mu)\mbox{ and } g\geq 0\mbox{ then }\widetilde
Ug\geq 0,
\end{equation*}
and
\begin{equation*}
\int_X (\widetilde Ug)(x)\de\mu(x)=\int_Xg(x)\de\mu(x),\quad\mbox{ for all }
g\in L^1_\CC(X;\mu).
\end{equation*}
Then $\widetilde U^\sharp\colon L^\infty_\CC(X;\mu)\ra L^\infty_\CC(X;\mu)$ is unital and positive.
Since $L^\infty_\CC(X;\mu)$ is a commutative $C^*$-algebra, $\widetilde U^\sharp$ 
is completely positive, cf.\ \cite{Stinespring}.

On the other hand, the observable $O$ can be naturally viewed as a real valued function in 
$L^\infty_\CC(X;\mu)$ and, if $U^\sharp(O)=O$ and $U^\sharp(O^2)=O^2$, 
it follows that $\widetilde U^\sharp(O)=O$ and $\widetilde U^\sharp(O^2)=O^2$. Now we can use
Theorem~\ref{t:doi} and conclude that $\widetilde U^\sharp(Of)=O\widetilde U^\sharp(f)$ 
for all $f\in L^\infty_CC(X;\mu)$, hence $[O,\widetilde U]=0$ and then $[O,U]=0$.
\end{proof}

\subsection{Continuous Stochastic/Markov Semigroups.}
A \emph{continuous stochastic semigroup} on $(X;\mu)$ is a strongly continuous semigroup of 
stochastic operators on $L^1_\RR(X;\mu)$.
The \emph{infinitesimal generator} of $\{U_t\}_{t\geq 0}$ is the closed and densely defined operator
$H$ in $L^1_\RR(X;\mu)$, see Subsection~\ref{ss:fpw}.
Let $\{U_t\}_{t\geq 0}$ be a continuous
stochastic semigroup with respect to $(X;\mu)$ and $H$ its infinitesimal generator. 
Then $\{U_t^\sharp\}_{t\geq 0}$ is a $w^*$-continuous semigroup of Markov maps.
The \emph{$w^*$-infinitesimal generator} of $\{U_t^\sharp\}_{t\geq 0}$ is the $w^*$-closed, hence 
closed, and $w^*$-densely defined (but, in general, not densely defined) operator
$H^\sharp$ in $L^\infty_\RR(X;\mu)$ which, by Phillips Theorem \cite{Phillips}, can be described by
\begin{equation*}
H^\sharp f=w^*-\lim_{t\ra0+}\frac{U_tf-f}{t},\quad f\in\dom(H^\sharp),
\end{equation*}
where
\begin{equation*}
\dom(H^\sharp)=\{f\in L^\infty_\RR(X;\mu)\mid w^*-\lim_{t\ra0+}\frac{U_t^\sharp f-f}{t}
\mbox{ exists in }L^\infty_\RR(X;\mu)\}.
\end{equation*} 

The equivalence of assertions (i) and (ii) in the next theorem has been obtained in 
\cite{BaezFong}, which we now obtain as 
a consequence of Theorem~\ref{t:doi}, via Theorem~\ref{t:dms}. 
We complete their theorem with four more equivalent 
assertions in terms of infinitesimal generators and dual. The proofs
are very similar with those in Theorem~\ref{t:bartoszek2} and we omit repeating the arguments, in
particular, the equivalence of assertions (ii)$^\prime$ and (iii)$^\prime$ follows from 
Theorem~\ref{t:dual}.

\begin{theorem}\label{t:dmsd} 
Let $(X;\mu)$ be a $\sigma$-finite measure space, $\{U_t\}_{t\geq 0}$ a continuous stochastic semigroup with respect to $(X;\mu)$, $H$ its infinitesimal generator, 
and $O\in L^\infty_\RR(X;\mu)$ an observable. The following assertions are equivalent:
\begin{itemize}
\item[(i)] $[O,U_t]=0$ for all real $t\geq 0$.
\item[(ii)] For every probability distribution $g$ on $(X;\mu)$, the expected values
 $\langle O,U_tg\rangle $ and $\langle O^2,U_tg\rangle$ are constant with respect to 
 $t\geq 0$.
\item[(iii)] $[O,H]=0$, in the sense that the operator of multiplication with $O$ leaves 
$\dom(H)$ invariant and $OHg=HOg$ for all $g\in\dom(H)$.
\item[(i)$^\prime$] $[O,U_t^\sharp]=0$ for all real $t\geq 0$.
\item[(ii)$^\prime$] $U_t^\sharp(O)=O$ and $U_t^\sharp(O^2)=O^2$ for all real $t\geq 0$
\item[(iii)$^\prime$] Both $O$ and $O^2$ are in the kernel of $H^\sharp$, that is, 
$O,O^2\in\dom(H^\sharp)$ and $H^\sharp (O)=H^\sharp (O^2)=0$.
\end{itemize}
\end{theorem}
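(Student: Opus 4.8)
The plan is to mirror the proof of Theorem~\ref{t:bartoszek2}, using Theorem~\ref{t:dms} for the "static" equivalences and then bridging to the infinitesimal generators in the same way. First I would dispose of the equivalences (i)$\LRa$(ii)$\LRa$(i)$^\prime$$\LRa$(ii)$^\prime$ by applying Theorem~\ref{t:dms} to the single stochastic operator $U=U_t$, for each fixed $t\geq 0$, and to its dual $U_t^\sharp$. For fixed $t$ that theorem gives $[O,U_t]=0$ iff $\langle O,U_tg\rangle=\langle O,g\rangle$ and $\langle O^2,U_tg\rangle=\langle O^2,g\rangle$ for every probability distribution $g$, iff $[O,U_t^\sharp]=0$, iff $U_t^\sharp(O)=O$ and $U_t^\sharp(O^2)=O^2$. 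Since $U_0=I$, constancy in $t$ of the two expected values is the same as their equality with the values at $t=0$, so quantifying over all $t\geq 0$ yields the four equivalences at once.

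Next I would prove (i)$\Ra$(iii) by the difference-quotient argument. For $g\in\dom(H)$ the element $Og$ lies in $L^1_\RR(X;\mu)$ because $O\in L^\infty_\RR(X;\mu)$, and $[O,U_t]=0$ gives $U_t(Og)=O\,U_t(g)$; hence
\begin{equation*}
\frac{U_t(Og)-Og}{t}=O\,\frac{U_t(g)-g}{t}\xrightarrow[t\ra0+]{}O\,Hg,
\end{equation*}
so $Og\in\dom(H)$ and $H(Og)=O\,Hg$, which is exactly (iii).

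The heart of the argument is the passage (iii)$\Ra$(iii)$^\prime$, carried out through the dual generator. Given $f\in\dom(H^\sharp)$ and any $g\in\dom(H)$, condition (iii), the duality \eqref{e:depsi} between $H$ and $H^\sharp$, and the fact that $O$ is real-valued give
\begin{equation*}
\langle Hg,Of\rangle=\langle O\,Hg,f\rangle=\langle H(Og),f\rangle=\langle Og,H^\sharp f\rangle=\langle g,O\,H^\sharp f\rangle.
\end{equation*}
Since $O\,H^\sharp f\in L^\infty_\RR(X;\mu)$, the functional $g\mapsto\langle Hg,Of\rangle$ is $\|\cdot\|_1$-continuous, so by Phillips' characterisation \eqref{e:degen} of $\dom(H^\sharp)$ we obtain $Of\in\dom(H^\sharp)$ and $H^\sharp(Of)=O\,H^\sharp f$, that is, $[O,H^\sharp]=0$. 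Because $U_t^\sharp(1)=1$ for all $t$, we have $1\in\dom(H^\sharp)$ and $H^\sharp(1)=0$; hence $O=O\cdot 1\in\dom(H^\sharp)$ with $H^\sharp(O)=O\,H^\sharp(1)=0$, and then $O^2=O\cdot O\in\dom(H^\sharp)$ with $H^\sharp(O^2)=O\,H^\sharp(O)=0$. This is (iii)$^\prime$.

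Finally I would close the loop with Theorem~\ref{t:dual} applied to the dual $w^*$-continuous semigroup $\{U_t^\sharp\}_{t\geq0}$: it gives $O\in\ker(H^\sharp)$ iff $U_t^\sharp(O)=O$ for all $t\geq0$, and likewise for $O^2$, so (iii)$^\prime$$\LRa$(ii)$^\prime$. Combined with the first paragraph, this yields the cycle (i)$\Ra$(iii)$\Ra$(iii)$^\prime\Ra$(ii)$^\prime\Ra$(i), establishing the equivalence of all six assertions. The step I expect to require the most care is (iii)$\Ra$(iii)$^\prime$: since $\dom(H^\sharp)$ is in general only $w^*$-dense and not norm-dense, one cannot argue by density but must verify membership in $\dom(H^\sharp)$ directly through the continuity criterion \eqref{e:degen}, which is precisely what the displayed chain of dualities is arranged to supply.
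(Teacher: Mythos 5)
Your proposal is correct and follows exactly the route the paper indicates: the static equivalences via Theorem~\ref{t:dms} applied to each $U_t$, the difference-quotient and duality arguments copied from the proof of Theorem~\ref{t:bartoszek2} (using $1\in\dom(H^\sharp)$, $H^\sharp(1)=0$ in place of \eqref{e:iep}), and Theorem~\ref{t:dual} to close the loop. The paper omits these details precisely because they are the ones you supply, so your write-up is a faithful expansion of the intended proof.
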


\section{Constants of Dynamical Quantum Systems}\label{s:cdqs}

We now consider the setting of dynamical quantum systems as in \cite{GoughRatiuSmolyanov}.
Notation is as in Section~\ref{s:dssm}. 
For a fixed Hilbert space $\cH$ and $A\in\cB(\cH)$ we have the \emph{left multiplication operator} 
$L_A\colon \cB_1(\cH)\ra 
\cB_1(\cH)$ defined by $L_A(T)=AT$, for all $T\in\cB_1(\cH)$, and the \emph{right multiplication
operator} $R_A\colon\cB_1(\cH)\ra\cB_1(\cH)$ defined by $R_A(T)=TA$, for all $T\in\cB_1(\cH)$.
Observe that, exactly with the same formal definition, we may have the left multiplication 
operator $L_A\colon\cB(\cH)\ra\cB(\cH)$ and, respectively, $R_A\colon \cB(\cH)\ra\cB(\cH)$. We
will not use different notation for these operators, hoping that which is which will
be clear from the context. For example, if $L_A\colon \cB_1(\cH)\ra \cB_1(\cH)$ then $L_A^\sharp
\colon\cB(\cH)\ra\cB(\cH)$ is the operator $R_A\colon\cB(\cH)\ra\cB(\cH)$. Also, considering
$M_A(T)=A^*TA$, the one-element quantum measurement operator, then $M_A=L_{A^*}R_A$.

We distinguish between the discrete quantum semigroups and 
continuous quantum semigroups.

\subsection{Discrete Quantum Semigroups}\label{ss:dqs}
We consider a quantum operation 
$\Psi\colon\cB_1(\cH)\ra\cB_1(\cH)$, that is, a trace preserving completely positive linear map.
It gives rise naturally to the discrete quantum semigroup $\{\Psi^n\}_{n\geq 0}$.
 
In order to substantiate the definition of a constant of a discrete quantum semigroup 
$\{\Psi^n\}_{n\geq 0}$ we first recall some natural definitions from quantum probability. 
Let $A$ be a bounded observable with respect to the Hilbert space $\cH$, 
that is, $A\in\cB(\cH)$ and $A=A^*$.
For any state $\rho\in\cD(\cH)$
one considers the \emph{expected value} of $A$ in the state $\rho$,
\begin{equation}\label{e:ev}
E(A;\rho)=\langle \rho, A\rangle=\tr(\rho A),
\end{equation}
the \emph{variation} of $A$ in the state $\rho$,
\begin{equation}\label{e:var}
V(A;\rho)=\langle\rho, A^2\rangle-\langle \rho,A\rangle^2=\tr(\rho A^2)-\tr(\rho A)^2,
\end{equation}
and its \emph{standard deviation},
\begin{equation}\label{e:sd}
\sigma(A;\rho)=\sqrt{\langle\rho, A^2\rangle-\langle \rho,A\rangle^2}=\sqrt{\tr(\rho A^2)-\tr(\rho A)^2}.
\end{equation}

An operator $A\in\cB(\cH)$ is called a \emph{constant} of the discrete quantum semigroup 
$\{\Psi^n\}_{n\geq 0}$, equivalentely, of $\Psi$, 
if for any state $\rho\in\cD(\cH)$, 
$\tr(\Psi^n(\rho)A)$ does not depend on $n\geq 0$, equivalently,
$\tr(\Psi(\rho)A)=\tr(\rho A)$. Clearly, $A$ is a constant of $\Psi$ if and only if for 
any $T\in\cB_1(\cH)$ we have $\tr(\Psi(T)A)=\tr(TA)$, equivalently, $\tr(T\Psi^\sharp(A))=\tr(TA)$ 
for all $T\in\cB_1(\cH)$. Consequently, $A\in\cB(\cH)$ is a constant of $\Psi$ 
if and only if $\Psi^\sharp(A)=A$, that is, $A$ is a fixed point of $\Psi^\sharp$. Formally, letting 
$\cC^\Psi$ denote the \emph{set of constants} of $\Psi$
\begin{align}\label{e:const} \cC^\Psi & =\{A\in\cB(\cH)\mid \mbox{for all }
\rho\in\cD(\cH),\ \tr(\Psi^n(\rho)A)\mbox{ does not depend on integer }n\geq 0\}\\
& =\{A\in\cB(\cH)\mid \tr(\Psi(\rho)A)=\tr(\rho A)\mbox{ for all }
\rho\in\cD(\cH)\}\nonumber \\
\intertext{we have,}
& = \{A\in\cB(\cH)\mid \Psi^\sharp(A)=A\}=\cB(\cH)^{\Psi^\sharp},\nonumber
\end{align}
where the last equality is actually the definition of $\cB(\cH)^{\Psi^\sharp}$ as 
the set of all fixed points of $\Psi^\sharp$, as in Subsection~\ref{ss:fppmcsa}. 

We have now a first Noether Type Theorem for a discrete dynamical quantum system, 
in a spirit closer to \cite{BaezFong}.

\begin{theorem}\label{t:dqs}
Let $\Psi$ be a quantum operation with respect to the Hilbert space $\cH$ and let
$A\in\cB(\cH)$. The following assertions are equivalent:
\begin{itemize}
\item[(i)] $[L_A,\Psi]=0$.
\item[(ii)] $A$ and $A^*A$ are constants of $\Psi$.
\item[(iii)] $[R_A,\Psi^\sharp]=0$.
\item[(iv)] $A$ and $A^*A$ are fixed points of $\Psi^\sharp$.
\end{itemize}
\end{theorem}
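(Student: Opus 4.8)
The plan is to reduce everything to Choi's Multiplicativity Property (Theorem~\ref{t:choi}.(2)) by first disposing of two equivalences that cost nothing. The equivalence (ii)$\LRa$(iv) is just the identity $\cC^\Psi=\cB(\cH)^{\Psi^\sharp}$ recorded in \eqref{e:const}: being a constant of $\Psi$ is by definition the same as being a fixed point of $\Psi^\sharp$, so the two assertions are verbally identical. For (i)$\LRa$(iii) I would argue by duality. The dual of $L_A\colon\cB_1(\cH)\ra\cB_1(\cH)$ is $R_A$ on $\cB(\cH)$, so, using $(\Theta\Xi)^\sharp=\Xi^\sharp\Theta^\sharp$, one finds $[L_A,\Psi]^\sharp=\Psi^\sharp R_A-R_A\Psi^\sharp=[\Psi^\sharp,R_A]$. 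Since the trace pairing is nondegenerate, a bounded operator on $\cB_1(\cH)$ is zero exactly when its dual on $\cB(\cH)$ is zero; hence $[L_A,\Psi]=0$ iff $[R_A,\Psi^\sharp]=0$. It then remains only to prove (iii)$\LRa$(iv).

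The implication (iii)$\Ra$(iv) is immediate once (iii) is written out as $\Psi^\sharp(SA)=\Psi^\sharp(S)A$ for all $S\in\cB(\cH)$. Taking $S=I$ and using that $\Psi^\sharp$ is unital gives $\Psi^\sharp(A)=A$; taking $S=A^*$ and using that $\Psi^\sharp$, being positive, is selfadjoint (so $\Psi^\sharp(A^*)=\Psi^\sharp(A)^*=A^*$) gives $\Psi^\sharp(A^*A)=\Psi^\sharp(A^*)A=A^*A$. Thus $A$ and $A^*A$ are fixed by $\Psi^\sharp$.

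The one step with genuine content is (iv)$\Ra$(iii). Because $\Psi^\sharp$ is unital and completely positive it is contractive (Russo--Dye: $\|\Psi^\sharp\|=\|\Psi^\sharp(I)\|=1$), so Theorem~\ref{t:choi} applies. From (iv) we have $\Psi^\sharp(A^*A)=A^*A$ and $\Psi^\sharp(A)=A$, and therefore
\[
\Psi^\sharp(A^*A)=A^*A=\Psi^\sharp(A)^*\Psi^\sharp(A),
\]
which is precisely the hypothesis of the left-multiplicativity clause Theorem~\ref{t:choi}.(2)(i). That clause yields $\Psi^\sharp(SA)=\Psi^\sharp(S)\Psi^\sharp(A)$ for every $S$, and inserting $\Psi^\sharp(A)=A$ turns this into $\Psi^\sharp(SA)=\Psi^\sharp(S)A$, i.e.\ (iii). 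I expect the only delicate points to be bookkeeping rather than mathematics: keeping straight that left multiplication dualizes to right multiplication, and noticing that only the one-sided datum $A^*A$ is used, so that Theorem~\ref{t:choi}.(2)(i) suffices and the full multiplicative-domain statement of Theorem~\ref{t:doi} (which would also require $AA^*$) is not needed. The conceptual heart, worth stating explicitly, is that the conservation law $\Psi^\sharp(A^*A)=A^*A$ together with $\Psi^\sharp(A)=A$ is exactly the equality case of the Schwarz inequality for $\Psi^\sharp$, so the whole Noether-type equivalence is a direct expression of Choi's propagation of multiplicativity.
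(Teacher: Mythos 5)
Your proposal is correct and follows essentially the same route as the paper: the paper also treats (i)$\LRa$(iii) and (ii)$\LRa$(iv) as immediate (by the duality $L_A^\sharp=R_A$ and the identity $\cC^\Psi=\cB(\cH)^{\Psi^\sharp}$), proves (iii)$\Ra$(iv) by the same substitutions $S=I$ and $S=A^*$, and proves (iv)$\Ra$(iii) by the same appeal to the one-sided clause of Choi's Multiplicativity Property applied to $\Psi^\sharp(A^*A)=A^*A=\Psi^\sharp(A)^*\Psi^\sharp(A)$. Your only additions are to write out the two ``clear'' equivalences explicitly and to flag that the full two-sided statement of Theorem~\ref{t:doi} is not needed, both of which match the paper's intent.
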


\begin{proof} (i)$\LRa$(iii) and (ii)$\LRa$(iv) are clear.

(iii)$\Ra$(iv). If $[R_A,\Psi^\sharp]=0$ then $\Psi^\sharp(SA)=\Psi^\sharp(S)A$ for all 
$S\in\cB(\cH)$. Letting $S=I$ we get $\Psi^\sharp(A)=A$ and, since $\Psi^\sharp$ is positive, 
hence selfadjoint, it follows that $\Psi^\sharp(A^*)=A^*$. Then, letting $S=A^*$ we get 
$\Psi^\sharp(A^*A)=\Psi^\sharp(A^*)A=A^*A$.

(iv)$\Ra$(iii). Assume that $\Psi^\sharp(A)=A$ and $\Psi^\sharp(A^*A)=A^*A$. Then 
$\Psi^\sharp(A^*)=A^*$ and $\Psi^\sharp(A^*A)=A^*A=\Psi^\sharp(A^*)\Psi(A)$. By
Theorem~\ref{t:choi} it follows that $\Psi^\sharp(TA)=\Psi^\sharp(T)\Psi^\sharp(A)=
\Psi^\sharp(T)A$ for all $T\in\cB(\cH)$, hence $[R_A,\Psi^\sharp]=0$.
\end{proof}

Clearly, there is a symmetric analogue of Theorem~\ref{t:dqs}.

\begin{theorem}\label{t:dqs2}
Let $\Psi$ be a quantum operation with respect to the Hilbert space $\cH$ and let
$A\in\cB(\cH)$. The following assertions are equivalent:
\begin{itemize}
\item[(i)] $[R_A,\Psi]=0$.
\item[(ii)] $A$ and $AA^*$ are constants of the discrete quantum semigroup $\{\Psi^n\}_{n\geq 0}$.
\item[(iii)] $[L_A,\Psi^\sharp]=0$.
\item[(iv)] $A$ and $AA^*$ are fixed points of $\Psi^\sharp$.
\end{itemize}
\end{theorem}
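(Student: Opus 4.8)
The plan is to mirror the proof of Theorem~\ref{t:dqs} essentially verbatim, interchanging the roles of left and right multiplication and replacing $A^*A$ by $AA^*$ throughout. Two structural facts reduce the nontrivial content to a single implication. First, by the duality relations recorded before Subsection~\ref{ss:dqs}, on $\cB(\cH)$ one has $R_A^\sharp=L_A$ (the mirror of $L_A^\sharp=R_A$); dualising the identity $R_A\Psi=\Psi R_A$ on $\cB_1(\cH)$ therefore yields $\Psi^\sharp L_A=L_A\Psi^\sharp$, and conversely, and since a bounded operator on $\cB_1(\cH)$ vanishes iff its dual vanishes, this gives the equivalence (i)$\LRa$(iii). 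Second, by the computation of $\cC^\Psi$ in \eqref{e:const}, an operator is a constant of $\Psi$ precisely when it is a fixed point of $\Psi^\sharp$, so (ii)$\LRa$(iv) is immediate. It then suffices to establish (iii)$\LRa$(iv).

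For (iii)$\Ra$(iv) I would unfold $[L_A,\Psi^\sharp]=0$ as $\Psi^\sharp(AS)=A\Psi^\sharp(S)$ for all $S\in\cB(\cH)$. Setting $S=I$ and using that $\Psi^\sharp$ is unital gives $\Psi^\sharp(A)=A$; since $\Psi^\sharp$ is positive, hence selfadjoint, this forces $\Psi^\sharp(A^*)=A^*$. Setting $S=A^*$ then produces $\Psi^\sharp(AA^*)=A\Psi^\sharp(A^*)=AA^*$, which is exactly assertion (iv).

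The only step carrying genuine content is (iv)$\Ra$(iii), and this is where Theorem~\ref{t:choi} enters. Starting from $\Psi^\sharp(A)=A$ and $\Psi^\sharp(AA^*)=AA^*$, selfadjointness again gives $\Psi^\sharp(A^*)=A^*$, and I would rewrite the second hypothesis as the multiplicativity identity $\Psi^\sharp(AA^*)=AA^*=\Psi^\sharp(A)\Psi^\sharp(A)^*$. Because $\Psi^\sharp$ is a unital, hence contractive, completely positive map on all of $\cB(\cH)$, the second clause of the Multiplicativity Property, Theorem~\ref{t:choi}.(2)(ii) applied with $a=A$, converts this into $\Psi^\sharp(AT)=\Psi^\sharp(A)\Psi^\sharp(T)=A\Psi^\sharp(T)$ for every $T\in\cB(\cH)$, which is precisely $[L_A,\Psi^\sharp]=0$. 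Note the asymmetry with Theorem~\ref{t:dqs}: there the hypothesis is on $A^*A$ and one invokes clause~(i) of Theorem~\ref{t:choi}.(2), whereas here the hypothesis on $AA^*$ calls for clause~(ii).

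I do not anticipate any genuine obstacle. The whole argument rests on the overall complete positivity of $\Psi^\sharp$ on the full algebra $\cB(\cH)$, which is what allows Theorem~\ref{t:choi} to propagate multiplicativity against an \emph{arbitrary} $T$, rather than only against elements of $C^*(e,A)$ as in the weaker Theorem~\ref{t:unu}. The one point to state carefully is that $\Psi^\sharp$ is contractive so that Theorem~\ref{t:choi} applies; this follows from unitality and positivity via the Russo--Dye Theorem. Everything else is bookkeeping inherited directly from the proof of Theorem~\ref{t:dqs}.
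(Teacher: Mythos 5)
Your proposal is correct and coincides with the paper's (implicitly given) argument: the paper proves Theorem~\ref{t:dqs2} simply by declaring it the symmetric analogue of Theorem~\ref{t:dqs}, and what you have written out is exactly that mirrored proof, including the correct identification $R_A^\sharp=L_A$ and the correct switch from clause~(i) to clause~(ii) of the Multiplicativity Property in Theorem~\ref{t:choi}.(2). No gaps.
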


We now consider the case of a bounded observable $A\in\cB(\cH)^+$, as in \eqref{e:ev}
through \eqref{e:sd}, and reformulate Theorem~\ref{t:dqs} and its symmetric Theorem~\ref{t:dqs2}
to a noncommutative analogue of the Noether
Type Theorem as in \cite{BaezFong}, see Theorem~\ref{t:dms}.

\begin{corollary}\label{c:sandi}
Let $\Psi$ be a quantum operation with respect to the Hilbert space $\cH$ and let $A\in\cB(\cH)$
with $A=A^*$. The following assertions are equivalent.
\begin{itemize}
\item[(i)] $[L_A,\Psi]=0$.
\item[(i)$^\prime$] $[R_A,\Psi]=0$.
\item[(ii)] In any state $\rho\in\cD(\cH)$, $A$ and $A^2$ have expected values
with respect to $\Psi^n\rho$ independent of $n\geq 0$.
\item[(ii)$^{\prime}$] In any state $\rho\in\cD(\cH)$, $A$ has expected value and
standard deviation with respect to $\Psi^n\rho$ independent of $n\geq 0$.
\item[(iii)] $A$ and $A^2$ are fixed points of $\Psi^\sharp$.
\end{itemize}
\end{corollary}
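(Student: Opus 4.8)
The plan is to deduce this corollary almost entirely from Theorem~\ref{t:dqs} and its symmetric counterpart Theorem~\ref{t:dqs2}, exploiting the hypothesis $A=A^*$. The first observation is that selfadjointness forces $A^*A=AA^*=A^2$, so the fixed-point conditions appearing in Theorem~\ref{t:dqs}.(iv) and Theorem~\ref{t:dqs2}.(iv) both collapse to the single statement that $A$ and $A^2$ are fixed points of $\Psi^\sharp$, which is exactly assertion (iii) of the corollary. First I would therefore invoke Theorem~\ref{t:dqs} to get (i)$\LRa$(iii), and Theorem~\ref{t:dqs2} to get (i)$^\prime$$\LRa$(iii); chaining these yields the equivalence of (i), (i)$^\prime$, and (iii).

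Next I would handle the equivalence of (ii) with (iii). Recall from the computation recorded in \eqref{e:const} that $A$ is a constant of $\Psi$ precisely when $\tr(\Psi^n(\rho)A)$ is independent of $n$ for every state $\rho$, and that this is in turn equivalent to $\Psi^\sharp(A)=A$; the same reasoning applied to $A^2$ gives that the expected value $\tr(\Psi^n(\rho)A^2)$ is independent of $n$ for all $\rho$ if and only if $\Psi^\sharp(A^2)=A^2$. Since assertion (ii) is nothing but the conjunction of these two constancy conditions, it is literally the statement that both $A$ and $A^2$ are fixed points of $\Psi^\sharp$, so (ii)$\LRa$(iii) is immediate from the definitions.

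The remaining link (ii)$\LRa$(ii)$^\prime$ is the small piece of genuine content, reproducing the variance observation from the introduction. Using \eqref{e:ev} and \eqref{e:sd}, the standard deviation in the state $\Psi^n\rho$ is
\begin{equation*}
\sigma(A;\Psi^n\rho)=\sqrt{\tr(\Psi^n(\rho)A^2)-\tr(\Psi^n(\rho)A)^2}.
\end{equation*}
If the expected value $\tr(\Psi^n(\rho)A)$ is already independent of $n$, then its square is as well, and hence $\sigma(A;\Psi^n\rho)$ is independent of $n$ if and only if $\tr(\Psi^n(\rho)A^2)$ is; this gives (ii)$\Ra$(ii)$^\prime$ and, reading the equivalence in reverse, (ii)$^\prime$$\Ra$(ii), since constancy of both the expected value and the standard deviation of $A$ forces constancy of both the expected value of $A$ and of $A^2$.

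I do not expect any serious obstacle: the whole corollary is a specialisation of the two preceding discrete-semigroup theorems to the selfadjoint case, supplemented by the elementary passage between expected values of $A,A^2$ and the expected value/standard deviation pair. The only point deserving care is to record explicitly that $A=A^*$ makes the left- and right-multiplication versions coincide at the level of condition (iii), which is what fuses Theorem~\ref{t:dqs} and Theorem~\ref{t:dqs2} into the single chain of equivalences claimed.
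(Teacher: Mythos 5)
Your proposal is correct and matches the paper's intended argument: the paper offers no separate proof of Corollary~\ref{c:sandi}, presenting it precisely as the reformulation of Theorem~\ref{t:dqs} and Theorem~\ref{t:dqs2} in the selfadjoint case where $A^*A=AA^*=A^2$, together with the identification of constancy of expected values with fixed points of $\Psi^\sharp$ via \eqref{e:const}. Your explicit handling of the expected value/standard deviation equivalence (ii)$\Leftrightarrow$(ii)$^\prime$ is exactly the elementary variance bookkeeping the paper leaves implicit, so nothing is missing.
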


In order to put the investigations in \cite{GoughRatiuSmolyanov} in a perspective closer to our 
approach, 
we now consider a scale of sets of constants of $\Psi$, more precisely, let
\begin{align}\label{e:cep2}\cC^\Psi_2 & =\{A\in\cB(\cH)\mid A,A^*A,AA^*\in\cC^\Psi\}, \\
\label{e:cepp}
\cC^\Psi_{\mathrm{p}} & =\{A\in\cB(\cH)\mid p(A,A^*)\in\cC^\Psi \mbox{ for all complex 
polynomials }p \\  \nonumber &\phantom{=\{A\in\cB(\cH)\mid p(A,A^*)\in\cC^\Psi\ } 
\mbox{ in two noncommutative variables}\},\\
\label{e:cepc}\cC^\Psi_{\mathrm{c}} & =\{A\in\cB(\cH)\mid C^*(I,A)\subseteq\cC^\Psi\},\\
\label{e:cepw}\cC^\Psi_{\mathrm{w}} & =\{A\in\cB(\cH)\mid W^*(A)\subseteq\cC^\Psi\},
\end{align}
where $C^*(I,A)$ denotes the $C^*$-algebra generated by $I$ and $A$, while $W^*(A)$ denotes
the von Neumann algebra generated by $A$. Transferring these classes in the Heisenberg picture,
we have
\begin{align}\label{e:dp2}\cC^\Psi_2 &=\{A\in\cB(\cH)\mid A,A^*A,AA^*\in\cB(\cH)^{\Psi^\sharp}\}
=\cB(\cH)^{\Psi^\sharp}_2,\\
\cC^\Psi_{\mathrm{p}}& =\{A\in\cB(\cH)\mid p(A,A^*)\in\cB(\cH)^{\Psi^\sharp}
\mbox{ for all complex polynomials }p\\
& \phantom{=\{A\in\cB(\cH)\mid p(A,A^*)\in\cC^\Psi\ }\mbox{ in two noncommutative variables}\}
\nonumber =\cB(\cH)^{\Psi^\sharp}_{\mathrm{p}}, \\
\label{e:dpc}\cC^\Psi_\mathrm{c} &=\{A\in\cB(\cH)\mid C^*(I,A)\subseteq\cB(\cH)^{\Psi^\sharp}\}
=\cB(\cH)^{\Psi^\sharp}_{\mathrm{c}},\\
\label{e:dpw}\cC^\Psi_\mathrm{w} &=\{A\in\cB(\cH)\mid W^*(A)\subseteq\cB(\cH)^{\Psi^\sharp}\}
=\cB(\cH)^{\Psi^\sharp}_{\mathrm{w}}.
\end{align}
It is easy
to see that $\cC^\Psi$ is an operator system, that is, a vector space stable under taking adjoints
and containing the identity $I$, and $w^*$-closed, hence closed with respect to the operator norm
as well. As any other operator system, $\cC^\Psi$ is linearly generated
by the set of its positive elements but, in general, not stable under multiplication, cf.\ 
\cite{Arveson1}, \cite{Arveson2}, \cite{BratelliJorgensenKishimotoWerner}.

\begin{theorem}\label{t:faq} Let $\Psi$ be a quantum operation with respect to a Hilbert space 
$\cH$.
\begin{itemize}
\item[(a)] We always have
$\cC^\Psi_2=\cC^\Psi_{\mathrm{p}}=\cC^\Psi_{\mathrm{c}}=\cC^\Psi_{\mathrm{w}}$ and this
set is a von Neumann algebra.
\item[(b)] The following assertions are equivalent:
\begin{itemize}
\item[(i)] $\cC^\Psi$ is stable under multiplication.
\item[(ii)] $\cC^\Psi=\cC^\Psi_2$.
\item[(iii)] $\cC^\Psi$ is a $C^*$-algebra.
\item[(iv)] $\cC^\Psi$ is a von Neumann algebra.
\end{itemize}
\end{itemize}
\end{theorem}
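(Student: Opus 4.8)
The plan is to reduce the whole statement to properties of the fixed-point set $\cB(\cH)^{\Psi^\sharp}$ of the dual map $\Psi^\sharp$, which is unital, $w^*$-continuous, and completely positive, hence contractive by the Russo--Dye Theorem. For part (a) I would first record the trivial chain of inclusions
\[
\cC^\Psi_{\mathrm{w}}\subseteq\cC^\Psi_{\mathrm{c}}\subseteq\cC^\Psi_{\mathrm{p}}\subseteq\cC^\Psi_2,
\]
which holds because $W^*(A)\supseteq C^*(I,A)$, the span of the monomials $p(A,A^*)$ is dense in $C^*(I,A)$, and this span contains $A$, $A^*A$, $AA^*$; thus requiring a larger subalgebra to be fixed is the stronger condition. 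The entire content of (a) is then to close this chain by proving $\cC^\Psi_2\subseteq\cC^\Psi_{\mathrm{w}}$.

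To do so I would apply Theorem~\ref{t:doi} with $\cA=\cB(\cH)$, $\Phi=\Psi^\sharp$, and $a=A$: the membership $A\in\cC^\Psi_2$ is precisely the hypothesis $A,A^*A,AA^*\in\cB(\cH)^{\Psi^\sharp}$, so assertion (iii) of that theorem gives $C^*(I,A)\subseteq\cB(\cH)^{\Psi^\sharp}=\cC^\Psi$, i.e.\ $A\in\cC^\Psi_{\mathrm{c}}$. I would then pass from the $C^*$-algebra to the von Neumann algebra it generates: since $C^*(I,A)$ is $w^*$-dense in $W^*(A)$ and $\cC^\Psi=\ker(\Psi^\sharp-I)$ is $w^*$-closed (because $\Psi^\sharp$ is $w^*$-continuous), it follows that $W^*(A)\subseteq\cC^\Psi$, that is, $A\in\cC^\Psi_{\mathrm{w}}$. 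This closes the chain, making the four sets equal. To identify this common set as a von Neumann algebra I would recognise it as the bimodule domain of $\Psi^\sharp$: indeed $A\in\cC^\Psi_2$ says $\Psi^\sharp(A)=A$ together with $\Psi^\sharp(A^*A)=A^*A=\Psi^\sharp(A)^*\Psi^\sharp(A)$ and $\Psi^\sharp(AA^*)=AA^*=\Psi^\sharp(A)\Psi^\sharp(A)^*$, i.e.\ $A\in\cB(\cH)^{\Psi^\sharp}\cap\cM_{\Psi^\sharp}$; by the identity $\cA^\Phi\cap\cM_\Phi=\cI(\Phi)$ from Subsection~\ref{ss:fppmcsa}, valid since $\Psi^\sharp$ is completely positive and contractive, this set equals $\cI(\Psi^\sharp)$, a unital $C^*$-subalgebra that is moreover $w^*$-closed (the defining conditions $\Psi^\sharp(Ab)=A\Psi^\sharp(b)$, $\Psi^\sharp(bA)=\Psi^\sharp(b)A$ are preserved under $w^*$-limits in $A$, using $w^*$-continuity of $\Psi^\sharp$ and the separate $w^*$-continuity of multiplication), hence a von Neumann algebra.

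For part (b) I would run the cycle (iv)$\Ra$(iii)$\Ra$(i)$\Ra$(ii)$\Ra$(iv). The first two implications are immediate, since a von Neumann algebra is a $C^*$-algebra and a $C^*$-algebra is closed under multiplication. For (i)$\Ra$(ii) one always has $\cC^\Psi_2\subseteq\cC^\Psi$; conversely, if $A\in\cC^\Psi$ then $A^*\in\cC^\Psi$ (an operator system is $*$-stable, and $\Psi^\sharp$ is selfadjoint), and stability under multiplication forces $A^*A,AA^*\in\cC^\Psi$, whence $A\in\cC^\Psi_2$. Finally (ii)$\Ra$(iv) is immediate from part (a), as $\cC^\Psi_2$ is a von Neumann algebra.

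The one genuinely substantial step is the inclusion $\cC^\Psi_2\subseteq\cC^\Psi_{\mathrm{c}}$, namely the propagation of the three fixed-point conditions on $A$, $A^*A$, $AA^*$ to all of $C^*(I,A)$; this is exactly what Theorem~\ref{t:doi} supplies. Consequently the main obstacle is not a new argument but ensuring the $w^*$-continuity hypotheses are correctly invoked when moving between $C^*(I,A)$ and $W^*(A)$ and when checking the $w^*$-closedness of the relevant fixed-point and bimodule sets, so that the purely $C^*$-algebraic conclusions of Subsection~\ref{ss:fppmcsa} upgrade to the von Neumann algebra setting.
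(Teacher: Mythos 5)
Your proof is correct and rests on the same key mechanism as the paper's: the trivial chain of inclusions among the four sets of constants, Theorem~\ref{t:doi} applied to $\Psi^\sharp$ to obtain the propagation $\cC^\Psi_2\subseteq\cC^\Psi_{\mathrm{c}}$, and a density/$w^*$-closedness argument to pass from $C^*(I,A)$ to $W^*(A)$. You differ from the paper in two places, both in the direction of greater self-containment. For the von Neumann algebra claim in (a), the paper verifies directly that $\cB(\cH)^{\Psi^\sharp}_2$ is stable under addition and multiplication by expanding $\Psi^\sharp((A+B)^*(A+B))$ and invoking the bimodule property, and then appeals rather tersely to the $w^*$-continuity of $\Psi^\sharp$ for $w^*$-closedness; you instead identify $\cC^\Psi_2$ with $\cB(\cH)^{\Psi^\sharp}\cap\cM_{\Psi^\sharp}=\cI(\Psi^\sharp)$, which is already known to be a unital $C^*$-subalgebra, and then check $w^*$-closedness of the bimodule conditions. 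This is arguably cleaner, because it makes the topological step explicit: the paper's set is cut out by conditions involving products of the variable element with itself, for which $w^*$-closedness is not immediate, whereas your bimodule conditions are linear in $A$ for each fixed $b$ and pass to $w^*$-limits by the separate $w^*$-continuity of multiplication together with the $w^*$-continuity of $\Psi^\sharp$. For part (b), the paper simply cites Lemma~2.2 of \cite{AriasGheondeaGudder}; your four-step cycle reproves it on the spot, and the only nontrivial implication, (i)$\Ra$(ii), is handled correctly via the $*$-stability of the operator system $\cC^\Psi$.
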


\begin{proof} (a) Clearly,
$\cC^\Psi_2\supseteq\cC^\Psi_{\mathrm{p}}\supseteq
\cC^\Psi_{\mathrm{c}}\supseteq\cC^\Psi_{\mathrm{w}}.$
Due to the density of the set of all operators 
$p(A,A^*)$, where $p$ is an arbitrary complex polynomial in two noncommutative variables, in 
$C^*(I,A)$, the $w^*$-density of $C^*(I,A)$ in $W^*(A)$, as well as the continuity and 
$w^*$-continuity of the map $A\mapsto \tr(\Psi(\rho)A)$, we have the equality
$\cC^\Psi_{\mathrm{p}}=\cC^\Psi_{\mathrm{c}}=\cC^\Psi_{\mathrm{w}}.$
On the other hand, using the dual representations as in \eqref{e:dp2} and \eqref{e:dpc}, from
Theorem~\ref{t:doi} we obtain $\cC^\Psi_2=\cC^\Psi_{\mathrm{c}}$.

In order to prove that this set is a von Neumann algebra, it is preferable to use its representation in 
the Heisenberg picture as $\cB(\cH)^{\Psi^\sharp}_2$, see \eqref{e:dp2}. 
Since $\Psi^\sharp$ is positive it is 
selfadjoint, hence $\cB(\cH)^{\Psi^\sharp}_2$ is stable under taking the involution $A\mapsto A^*$.
If $A,B\in\cB(\cH)^{\Psi^\sharp}_2$, by Theorem~\ref{t:doi} we have
\begin{equation}\label{e:psab}\Psi^\sharp(AB)=A\Psi^\sharp(B)=AB,
\end{equation} hence $\cB(\cH)^{\Psi^\sharp}_2$ is stable under multiplication. On the other hand,
\begin{align*}
\Psi^\sharp((A+B)^*(A+B)) & = \Psi^\sharp(A^*A+A^*B+B^*A+B^*A) \\
& = \Psi^\sharp(A^*A)+\Psi^\sharp(A^*B)+\Psi^\sharp(B^*A)+\Psi^\sharp(B^*A) \\
\intertext{hence, taking into account of \eqref{e:psab},}
& = A^*A+A^*B+B^*A+B^*A=(A+B)^*(A+B).
\end{align*}
Similarly we prove that $(A+B)(A+B)^*$ is a fixed point of $\Psi^\sharp$. Since, clearly $A+B$ is a
fixed point of $\Psi^\sharp$, 
it follows that $\cB(\cH)^{\Psi^\sharp}_2$ is stable under addition as well. On
the other hand, since $\Psi^\sharp$ is $w^*$-continuous, it follows that 
$\cB(\cH)^{\Psi^\sharp}_2$ is a von Neumann algebra.

(b) This is actually a reformulation of Lemma~2.2 in \cite{AriasGheondeaGudder}.
\end{proof}

\begin{remarks}\label{r:cexq} (1) There are special 
situations when one, hence all, of the assertions (i) through (iv) in Theorem~\ref{t:faq} hold(s) 
automatically: for example, if there exists a state $\omega$ of $\cB(\cH)$ that is \emph{faithful}, 
in the sense that $\omega(C^*C)=0$ implies $C=0$, and \emph{invariant} 
under $\Psi^\sharp$, that is, $\omega\circ\Psi^\sharp=\omega$, e.g.\
see Theorem~2.3 in \cite{AriasGheondeaGudder}. 

(2) However, as Theorem~4.2 in 
\cite{AriasGheondeaGudder} shows, there exist a L\"uders operation $\Phi_\cA$ with 
$\cA=\{A_1,A_2,A_3,A_4,A_5\}$, with notation as in \eqref{e:pac}, on $\cH=\ell_2(\FF_2)$,  
where $\FF_2$ denotes the free group on two generators, and an operator 
$B\in\cB(\cH)^+$ with $\Phi_\cA^\sharp(B)=B$ but $\Phi_\cA^\sharp(B^2)\neq B^2$. 
Consequently, both conditions
in Theorem~\ref{t:dqs} item (iv) are necessary, in general, and the set of constants $\cC^{\Psi}$, for
some quantum operations $\Psi$, is not stable under multiplication, 
equivalently, it is not a von Neumann algebra, see Theorem~\ref{t:faq}.(b). This answers
the question raised in Remark~\ref{r:general}.(3) as well.
\end{remarks}

\subsection{Continuous Quantum Semigroups.}\label{ss:cqs}
A family indexed on the set of nonnegative real numbers $\mathbf{\Psi}=\{\Psi_t\}_{t\geq 0}$ is
called a \emph{dynamical quantum system}, sometimes called a 
\emph{dynamical quantum stochastic system}, with respect to a Hilbert space $\cH$, 
if it is a strongly continuous semigroup of quantum operations 
$\Psi_t\colon\cB_1(\cH)\ra\cB_1(\cH)$, $t\geq 0$.
For a dynamical quantum system $\mathbf{\Psi}$, we consider its infinitesimal generator 
$\psi$, see Subsection~\ref{ss:fpw} 
for the general setting. 
This definition makes a representation of the dynamical quantum system 
$\mathbf{\Psi}$ into the Schr\"odinger picture. Transferring a dynamical quantum system 
$\mathbf{\Psi}$ into the Heisenberg picture, we get its dual, usually called 
\emph{dynamical quantum Markov system}, 
$\mathbf{\Psi}^\sharp=\{\Psi^\sharp_t\}_{t\geq 0}$ which is a $w^*$-continuous 
one-parameter semigroup of $w^*$-continuous, unital, completely positive linear maps 
$\Psi_t^\sharp\colon \cB(\cH)\ra\cB(\cH)$ to which one associates its $w^*$-infinitesimal generator
$\psi^\sharp$, as in \eqref{e:genp} and \eqref{e:domgenp}. Here, an important issue is that 
by Phillips's Theorem \cite{Phillips}, $\psi^\sharp$ is indeed the dual of $\psi$. 

Note that our
definitions are more general than those usually considered
in most mathematical models of quantum open systems, 
e.g.\ see \cite{GoughRatiuSmolyanov}, \cite{FagnolaRebolledo} 
and the rich bibliography cited there, which instead of strong continuity 
requires the (operator) norm continuity, that is, the mapping 
$\RR_+\ni t\mapsto \Psi_t\in\cL(\cB_1(\cH))$ should be continuous
with respect to the operator norm of $\cL(\cB_1(\cH))$.

An operator $A\in\cB(\cH)$ is called a \emph{constant} of the dynamical quantum system 
$\mathbf{\Psi}=\{\Psi_t\}_{t\geq 0}$,
if, for any density operator $\rho\in\cD(\cH)$, 
$\tr(\Psi_t(\rho)A)$ does not depend on $t\geq 0$, equivalently, $\tr(\Psi_t(\rho)A)=\tr(\rho A)$ 
for all $t\geq 0$. Clearly, $A$ is a constant of $\mathbf{\Psi}$ if and only if for 
any $T\in\cB_1(\cH)$ we have $\tr(\Psi_t(T)A)=\tr(TA)$ for all $t\geq 0$, 
equivalently, $\tr(T\Psi_t^\sharp(A))=\tr(TA)$ 
for all $T\in\cB_1(\cH)$ and all $t\geq 0$. Consequently, $A\in\cB(\cH)$ is a constant of $\Psi$ 
if and only if $\Psi^\sharp_t(A)=A$ for all $t\geq 0$, that is, $A$ is a fixed point of $\Psi_t^\sharp$
for all $t\geq 0$. Formally, letting $\cC^{\mathbf{\Psi}}$ denote the \emph{set of constants} of 
$\mathbf{\Psi}$
\begin{align}\label{e:const2} \cC^{\mathbf{\Psi}} & =\{A\in\cB(\cH)\mid \mbox{for all }
\rho\in\cD(\cH),\ \tr(\Psi_t(\rho)A)\mbox{ does not depend on real }t\geq 0\}\\
& =\{A\in\cB(\cH)\mid \tr(\Psi_t(\rho)A)=\tr(\rho A)\mbox{ for all }
\rho\in\cD(\cH)\mbox{ and all }t\geq 0\}\nonumber \\
& = \{A\in\cB(\cH)\mid \Psi_t^\sharp(A)=A\mbox{ for all }t\geq 0\}=
\cB(\cH)^{\mathbf{\Psi}^\sharp},\nonumber
\end{align}
where the last equality is actually the definition of $\cB(\cH)^{\mathbf{\Psi}^\sharp}$ 
as the set of all joint fixed points of $\Psi_t^\sharp$, $t\geq 0$. In addition, as a consequence
of Theorem~\ref{t:dual}, we have
\begin{equation}\label{e:kernel}
\cC^{\mathbf{\Psi}}=\cB(\cH)^{\mathbf{\Psi}^\sharp}=\ker(\psi^\sharp)=\{T\in\cB(\cH)\mid T\in
\dom(\psi^\sharp),\ \psi^\sharp(T)=0\}.
\end{equation}

The next theorem is the continuous variant of Theorem~\ref{t:dqs}.
\begin{theorem}\label{t:dqsc}
Let $\mathbf{\Psi}=\{\Psi_t\}_{t\geq 0}$ be a dynamical quantum stochastic system 
with respect to the Hilbert space $\cH$, let $\psi$ denote its infinitesimal generator, 
and let $A\in\cB(\cH)$. The following assertions are equivalent:
\begin{itemize}
\item[(i)] $[L_A,\Psi_t]=0$ for all $t\geq 0$.
\item[(ii)] $A$ and $A^*A$ are constants of $\mathbf{\Psi}$.
\item[(iii)] $[R_A,\Psi_t^\sharp]=0$ for all $t\geq 0$.
\item[(iv)] $A$ and $A^*A$ are joint fixed points of $\Psi_t^\sharp$ for all $t\geq 0$.
\item[(v)] $[L_A,\psi]=0$, that is, $L_A$ leaves $\dom(\psi)$ invariant and $A\psi(T)=\psi(AT)$ for
all $T\in\dom(\psi)$.
\item[(vi)] $A,A^*A\in\ker(\psi^\sharp)$, that is, $A,A^*A\in\dom(\psi^\sharp)$ and $\psi^\sharp(A)=
\psi^\sharp(A^*A)=0$.
\end{itemize}
\end{theorem}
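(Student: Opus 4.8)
The plan is to reduce every assertion either to the already-proved discrete Noether theorem (Theorem~\ref{t:dqs}) or to the semigroup/generator dictionary of Theorem~\ref{t:dual}, and to treat the two generator statements (v) and (vi) by elementary differentiation and resolvent arguments. The key observation that does most of the work is that the four conditions (i)--(iv) are purely \emph{pointwise in $t$}: for each fixed $t\geq0$ the map $\Psi_t$ is itself a quantum operation, so Theorem~\ref{t:dqs} applies verbatim to $\Psi=\Psi_t$ and yields, for that single $t$, the equivalence of $[L_A,\Psi_t]=0$, of ``$A$ and $A^*A$ are fixed points of $\Psi_t^\sharp$'', and of $[R_A,\Psi_t^\sharp]=0$. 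Quantifying over all $t\geq0$ turns these into exactly (i), (iv), and (iii), while (ii) coincides with (iv) by the definition of a constant of $\mathbf{\Psi}$ in \eqref{e:const2}. Thus the whole $C^*$-algebraic content (Choi's multiplicativity) is imported for free, and (i)$\LRa$(ii)$\LRa$(iii)$\LRa$(iv) requires no new argument.

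The passage to the dual generator is then pure semigroup theory. By Theorem~\ref{t:dual}, an element $f\in\cB(\cH)$ satisfies $\Psi_t^\sharp(f)=f$ for all $t\geq0$ if and only if $f\in\ker(\psi^\sharp)$. Applying this once to $f=A$ and once to $f=A^*A$ gives (iv)$\LRa$(vi) immediately, with no further computation.

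It remains to fold in the generator statement (v), for which I would prove (i)$\LRa$(v) directly. For (i)$\Ra$(v): assuming $A\Psi_t(T)=\Psi_t(AT)$ for all $T\in\cB_1(\cH)$ and all $t$, then for $T\in\dom(\psi)$,
\[
\frac{\Psi_t(AT)-AT}{t}=A\,\frac{\Psi_t(T)-T}{t}\xrightarrow[t\ra0+]{}A\psi(T),
\]
so $AT\in\dom(\psi)$ and $\psi(AT)=A\psi(T)$, that is $[L_A,\psi]=0$. For the converse (v)$\Ra$(i) I would pass through the resolvent: assuming $L_A$ maps $\dom(\psi)$ into itself with $\psi L_A=L_A\psi$ there, a short computation shows that $L_A$ commutes with $(\lambda I-\psi)^{-1}$ for every $\lambda$ with $\Re\lambda>\omega$, since writing $x=(\lambda I-\psi)^{-1}y$ gives $(\lambda I-\psi)(L_Ax)=L_Ay$, whence $L_A(\lambda I-\psi)^{-1}y=(\lambda I-\psi)^{-1}L_Ay$. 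Consequently $L_A$ commutes with every power $(\lambda I-\psi)^{-n}$ and with the finite partial sums in the exponential formula \eqref{e:psite}; because $L_A$ is bounded it commutes with the $\lambda\ra\infty$ limit, so $L_A\Psi_t=\Psi_tL_A$ on the dense set $\dom(\psi)$ and then, again by boundedness, on all of $\cB_1(\cH)$, which is (i).

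I expect the direction (v)$\Ra$(i) to be the only genuinely delicate step. One must read $[L_A,\psi]=0$ as a bona fide commutation of the bounded operator $L_A$ with the unbounded closed generator $\psi$ on its domain, so that the resolvent-commutation identity is legitimate, and one must justify interchanging $L_A$ with the limit in \eqref{e:psite} solely from boundedness of $L_A$ together with the density of $\dom(\psi)$ in $\cB_1(\cH)$. The remaining implications are formal consequences of Theorem~\ref{t:dqs} and Theorem~\ref{t:dual} and carry over without incident.
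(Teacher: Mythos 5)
Your proof is correct, and its skeleton largely matches the paper's: the paper also disposes of (i)--(iv) by applying the discrete Theorem~\ref{t:dqs} to each $\Psi_t$ separately, and also gets (iv)$\LRa$(vi) straight from Theorem~\ref{t:dual} (this is recorded as \eqref{e:kernel}). Where you genuinely diverge is in how (v) is tied into the cycle. The paper handles (v) ``as in the proof of Theorem~\ref{t:bartoszek2}'': after the same differentiation argument for (i)$\Ra$(v), it does \emph{not} return to (i) directly, but dualizes the commutation $[L_A,\psi]=0$ into $[R_A,\psi^\sharp]=0$ on $\dom(\psi^\sharp)$ (via the pairing $\langle\psi(T),SA\rangle=\langle\psi(TA),S\rangle$, exactly as in the step (iv)$\Ra$(v) of Theorem~\ref{t:bartoszek2}), then evaluates at $S=I$ and $S=A^*$ using $\psi^\sharp(I)=0$ and selfadjointness of $\psi^\sharp$ to land on (vi), and closes the loop with Theorem~\ref{t:dual}. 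You instead prove (v)$\Ra$(i) directly: commutation of $L_A$ with the resolvent $(\lambda I-\psi)^{-1}$, hence with its powers and with the exponential formula \eqref{e:psite}, first on $\dom(\psi)$ and then on all of $\cB_1(\cH)$ by density and boundedness. Both routes are sound. Yours has the advantage of never touching the dual generator $\psi^\sharp$, whose domain is only $w^*$-dense and whose manipulation requires Phillips's theorem; the cost is that you must invoke the Hille--Yosida resolvent bounds to ensure absolute convergence of the series in \eqref{e:psite} before interchanging with $L_A$ (this is the one point you flagged, and it does hold: $\|(\lambda I-\psi)^{-n}\|\leq M(\lambda-\omega)^{-n}$ makes the series norm-convergent, and $L_A$ passes through norm limits). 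The paper's route is shorter given the duality infrastructure it has already built, and it produces the intermediate statement $[R_A,\psi^\sharp]=0$, a dual-generator commutation which is listed as a separate equivalent assertion in Theorem~\ref{t:bartoszek2} but omitted from the list in Theorem~\ref{t:dqsc}; your argument bypasses that statement entirely.
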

The equivalence of assertions (i)--(iv) follows from Theorem~\ref{t:dqs}, while their equivalence 
with assertions (v) and (vi) can be proven similarly as in the proof of 
Theorem~\ref{t:bartoszek2} and we do not repeat the arguments. In particular, the equivalence of 
assertions (iv) and (vi) in the previous theorem follows from Theorem~\ref{t:dual}, 
but this is a more general fact, see \eqref{e:kernel}.

There is a symmetric variant to Theorem~\ref{t:dqsc}, 
in which $L_A$ and $R_A$ are interchanged and, correspondingly,
$A^*A$ and $AA^*$ are interchanged, the continuous dynamical system variant of 
Theorem~\ref{t:dqs2}. We leave the reader to formulate it.

In case of a bounded observable $A\in\cB(\cH)^+$, with expected value, variation, and standard 
deviation to an arbitrary state $\rho\in\cD(\cH)$ as in \eqref{e:ev}
through \eqref{e:sd}, Theorem~\ref{t:dqsc} can be reformulated
to a noncommutative analogue of the Noether
Type Theorem as in \cite{BaezFong}, see Theorem~\ref{t:dms}.

\begin{corollary}\label{c:dqsc}
Let $\mathbf{\Psi}=\{\Psi_t\}_{t\geq 0}$ be a dynamical quantum stochastic system 
with respect to the Hilbert space $\cH$, let $\psi$ denote its infinitesimal generator, 
and let $A\in\cB(\cH)$, $A=A^*$, be a bounded observable. 
The following assertions are equivalent:
\begin{itemize}
\item[(i)] $[L_A,\Psi_t]=0$ for all $t\geq 0$.
\item[(i)$^\prime$] $[R_A,\Psi_t]=0$ for all $t\geq 0$.
\item[(ii)] In any state $\rho\in\cD(\cH)$, $A$ and $A^2$ have expected values with respect to 
$\Psi_t$ independent of $t\geq 0$.
\item[(ii)$^\prime$] In any state $\rho\in\cD(\cH)$, $A$ has expected value and standard deviation 
with respect to $\Psi_t$ independent of $t\geq 0$.
\item[(iii)] $[R_A,\Psi_t^\sharp]=0$ for all $t\geq 0$.
\item[(iii)$^\prime$] $[L_A,\Psi_t^\sharp]=0$ for all $t\geq 0$.
\item[(iv)] $A$ and $A^2$ are joint fixed points of $\Psi_t^\sharp$ for all $t\geq 0$.
\item[(v)] $[L_A,\psi]=0$, that is, $L_A$ leaves $\dom(\psi)$ invariant and $A\psi(T)=\psi(AT)$ for
all $T\in\dom(\psi)$.
\item[(v$^\prime$)] $[R_A,\psi]=0$, that is, $R_A$ leaves $\dom(\psi)$ invariant and 
$\psi(T)A=\psi(TA)$ for all $T\in\dom(\psi)$.
\item[(vi)] $A,A^2\in\ker(\psi^\sharp)$, that is, $A,A^2\in\dom(\psi^\sharp)$ and $\psi^\sharp(A)=
\psi^\sharp(A^2)=0$.
\end{itemize}
\end{corollary}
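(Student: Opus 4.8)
The plan is to deduce the corollary from Theorem~\ref{t:dqsc}, from its symmetric variant (in which $L_A$ and $R_A$, and correspondingly $A^*A$ and $AA^*$, are interchanged), and from the elementary identity relating variance, expected value, and the expected value of the square. The starting point is the observation that since $A=A^*$ one has $A^*A=AA^*=A^2$; consequently the two distinct second-order conditions that appear separately in Theorem~\ref{t:dqsc} and in its symmetric counterpart both collapse to the single requirement that $A$ and $A^2$ be joint fixed points of $\{\Psi_t^\sharp\}_{t\geq 0}$, which is precisely assertion~(iv) of the corollary.

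With this identification in place, I would first apply Theorem~\ref{t:dqsc} directly, reading its assertion~(ii) (``$A$ and $A^*A$ are constants of $\mathbf{\Psi}$'') as ``$A$ and $A^2$ are constants'', that is, the expected values $\langle\Psi_t\rho,A\rangle$ and $\langle\Psi_t\rho,A^2\rangle$ are independent of $t$ for every $\rho\in\cD(\cH)$. This yields at once the mutual equivalence of the unprimed assertions (i), (ii), (iii), (iv), (v), and (vi). I would then invoke the symmetric variant of Theorem~\ref{t:dqsc}; since $AA^*=A^2$, its fixed-point condition is again assertion~(iv), so the primed assertions (i)$'$, (iii)$'$, and (v)$'$ are likewise equivalent to (iv). Thus every assertion except possibly (ii)$'$ is already accounted for.

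It remains to tie (ii)$'$ to the rest, which is the only genuinely new step and is a direct computation with the variance \eqref{e:var}. Fix $\rho\in\cD(\cH)$ and set $V(A;\Psi_t\rho)=\langle\Psi_t\rho,A^2\rangle-\langle\Psi_t\rho,A\rangle^2$. If the expected values of $A$ and $A^2$ are constant in $t$, then $V(A;\Psi_t\rho)$ is constant, hence so is its square root $\sigma(A;\Psi_t\rho)$; this gives (ii)$\Ra$(ii)$'$. Conversely, if $\langle\Psi_t\rho,A\rangle$ and $\sigma(A;\Psi_t\rho)$ are constant in $t$, then $\langle\Psi_t\rho,A^2\rangle=\sigma(A;\Psi_t\rho)^2+\langle\Psi_t\rho,A\rangle^2$ is constant as well, giving (ii)$'\Ra$(ii). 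Since $\rho$ was arbitrary, (ii) and (ii)$'$ are equivalent.

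I do not anticipate a serious obstacle: the corollary is essentially a transcription of Theorem~\ref{t:dqsc} and its symmetric partner to the self-adjoint case. The only points requiring care are the bookkeeping that self-adjointness genuinely merges the conditions on $A^*A$ and $AA^*$ into the single condition on $A^2$, and the verification that the passage between the expected value of $A^2$ and the standard deviation holds uniformly over all states $\rho\in\cD(\cH)$.
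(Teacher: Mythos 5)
Your proposal is correct and follows exactly the route the paper intends: the corollary is presented there as a direct reformulation of Theorem~\ref{t:dqsc} and its symmetric variant for selfadjoint $A$, where $A^*A=AA^*=A^2$ merges the two second-order conditions, and the equivalence of (ii) and (ii)$'$ is the elementary variance identity $\langle\Psi_t\rho,A^2\rangle=\sigma(A;\Psi_t\rho)^2+\langle\Psi_t\rho,A\rangle^2$. Nothing is missing.
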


On the other hand, as in \eqref{e:cep2}--\eqref{e:cepw}, 
we have the joint versions of the scale of sets of constants 
\begin{equation*}
\cC^{\mathbf{\Psi}}_{\mathrm{w}}\subseteq\cC^{\mathbf{\Psi}}_{\mathrm{c}}\subseteq
\cC^{\mathbf{\Psi}}_{\mathrm{p}}\subseteq\cC^{\mathbf{\Psi}}_{2}\subseteq\cC^{\mathbf{\Psi}},
\end{equation*} 
in the Schr\"odinger picture, 
more precisely,
\begin{equation}\cC^{\mathbf{\Psi}}_\bullet=\bigcap_{t\geq 0}\cC^{\Psi_t}_\bullet,\mbox{ where }
\bullet=2,\mathrm{p},\mathrm{c},\mathrm{w},
\end{equation}
and, as in \eqref{e:dp2}--\eqref{e:dpw}, 
the sets of joint fixed points 
\begin{equation*}
\cB(\cH)^{\mathbf{\Psi}^\sharp}_{\mathrm{w}}\subseteq
\cB(\cH)^{\mathbf{\Psi}^\sharp}_{\mathrm{c}}\subseteq\cB(\cH)^{\mathbf{\Psi}^\sharp}_{\mathrm{p}}
\subseteq\cB(\cH)^{\mathbf{\Psi}^\sharp}_2\subseteq\cB(\cH)^{\mathbf{\Psi}^\sharp},
\end{equation*} 
in the Heisenberg picture,
\begin{equation}
\cB(\cH)^{\mathbf{\Psi}^\sharp}_\bullet=\bigcap_{t\geq 0}\cB(\cH)^{\Psi_t^\sharp}_\bullet,
\mbox{ where }\bullet=2,\mathrm{p},\mathrm{c},\mathrm{w}.
\end{equation}
$\cC^{\mathbf{\Psi}}=\cB(\cH)^{\mathbf{\Psi}}$ 
is a $w^*$-closed operator system and $w^*$-closed, hence closed with 
respect to the operator norm on $\cB(\cH)$ as well, linearly generated
by the set of its positive elements but, in general, not stable under multiplication. The following
theorem is a consequence of Theorem~\ref{t:faq}.

\begin{theorem}\label{t:faqs} Let $\mathbf{\Psi}$ be a dynamical quantum system with respect
to the Hilbert space $\cH$.
\begin{itemize}
\item[(a)] For any dynamical quantum system $\mathbf{\Psi}$ we have
$\cC^{\mathbf{\Psi}}_2=\cC^{\mathbf{\Psi}}_{\mathrm{p}}=\cC^{\mathbf{\Psi}}_{\mathrm{c}}
=\cC^{\mathbf{\Psi}}_{\mathrm{w}}$ and this set is a von Neumann algebra.
\item[(b)] The following assertions are equivalent:
\begin{itemize}
\item[(i)] $\cC^{\mathbf{\Psi}}$ is stable under multiplication.
\item[(ii)] $\cC^{\mathbf{\Psi}}=\cC^{\mathbf{\Psi}}_2$.
\item[(iii)] $\cC^{\mathbf{\Psi}}$ is a $C^*$-algebra.
\item[(iv)] $\cC^{\mathbf{\Psi}}$ is a von Neumann algebra.
\end{itemize}
\end{itemize}
\end{theorem}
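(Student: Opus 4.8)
The plan is to reduce every assertion to the discrete statement of Theorem~\ref{t:faq}, applied to each individual quantum operation $\Psi_t$, and then to intersect over $t\geq 0$; the only genuine input beyond bookkeeping is that an intersection of von Neumann algebras is again a von Neumann algebra.

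For part (a), I would begin from the identities $\cC^{\mathbf{\Psi}}_\bullet=\bigcap_{t\geq 0}\cC^{\Psi_t}_\bullet$ recorded before the statement, for $\bullet=2,\mathrm{p},\mathrm{c},\mathrm{w}$. Theorem~\ref{t:faq}.(a) gives, for each fixed $t\geq 0$, the equalities $\cC^{\Psi_t}_2=\cC^{\Psi_t}_{\mathrm{p}}=\cC^{\Psi_t}_{\mathrm{c}}=\cC^{\Psi_t}_{\mathrm{w}}$, so intersecting these four (equal) sets over all $t\geq 0$ immediately produces $\cC^{\mathbf{\Psi}}_2=\cC^{\mathbf{\Psi}}_{\mathrm{p}}=\cC^{\mathbf{\Psi}}_{\mathrm{c}}=\cC^{\mathbf{\Psi}}_{\mathrm{w}}$. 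For each $t$ this common set is a von Neumann algebra, again by Theorem~\ref{t:faq}.(a); since each $\Psi_t^\sharp$ is $w^*$-continuous, each $\cC^{\Psi_t}_2=\cB(\cH)^{\Psi_t^\sharp}_2$ is $w^*$-closed, and all of them contain $I$ because $\Psi_t^\sharp(I)=I$. An arbitrary intersection of $w^*$-closed unital $*$-subalgebras of $\cB(\cH)$ is again such an algebra, so $\cC^{\mathbf{\Psi}}_2$ is a von Neumann algebra.

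For part (b), the preliminary observation I would record is the characterisation
\[
\cC^{\mathbf{\Psi}}_2=\{A\in\cB(\cH)\mid A,A^*A,AA^*\in\cC^{\mathbf{\Psi}}\},
\]
obtained by unwinding $\cC^{\mathbf{\Psi}}_2=\bigcap_{t\geq 0}\cC^{\Psi_t}_2$ and $\cC^{\mathbf{\Psi}}=\bigcap_{t\geq 0}\cC^{\Psi_t}$ and interchanging the quantifier over $t$ with the three membership conditions. With this the four implications run exactly as in the discrete case. The implications (iv)$\Ra$(iii)$\Ra$(i) are immediate, since a von Neumann algebra is a $C^*$-algebra and a $C^*$-subalgebra is stable under multiplication; and (ii)$\Ra$(iv) follows from part (a), because if $\cC^{\mathbf{\Psi}}=\cC^{\mathbf{\Psi}}_2$ then $\cC^{\mathbf{\Psi}}$ equals the von Neumann algebra $\cC^{\mathbf{\Psi}}_2$. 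The decisive implication is (i)$\Ra$(ii): here I would use that $\cC^{\mathbf{\Psi}}$ is always an operator system, hence stable under the adjoint, and that $\cC^{\mathbf{\Psi}}_2\subseteq\cC^{\mathbf{\Psi}}$ always holds. If $\cC^{\mathbf{\Psi}}$ is stable under multiplication, then for any $A\in\cC^{\mathbf{\Psi}}$ also $A^*\in\cC^{\mathbf{\Psi}}$, whence $A^*A,AA^*\in\cC^{\mathbf{\Psi}}$; the characterisation above then places $A\in\cC^{\mathbf{\Psi}}_2$, giving the reverse inclusion $\cC^{\mathbf{\Psi}}\subseteq\cC^{\mathbf{\Psi}}_2$ and hence equality.

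I do not anticipate a real obstacle, as the whole argument is a transfer of Theorem~\ref{t:faq} through intersection. The two points deserving a moment's care are that an arbitrary intersection of von Neumann algebras stays a von Neumann algebra (which is precisely where the $w^*$-closedness of each $\cC^{\Psi_t}_2$, guaranteed by the $w^*$-continuity of $\Psi_t^\sharp$, is used) and the explicit verification of the characterisation of $\cC^{\mathbf{\Psi}}_2$ in terms of $A$, $A^*A$, $AA^*$ lying in $\cC^{\mathbf{\Psi}}$. Everything else transfers verbatim from the discrete result, which is exactly why the statement is a consequence of Theorem~\ref{t:faq}.
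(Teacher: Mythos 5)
Your proposal is correct and follows exactly the route the paper intends: the paper gives no details beyond the remark that Theorem~\ref{t:faqs} is a consequence of Theorem~\ref{t:faq}, and your intersection argument over $t\geq 0$, together with the observation that an intersection of von Neumann algebras is a von Neumann algebra and the unwinding of $\cC^{\mathbf{\Psi}}_2$ in terms of $A$, $A^*A$, $AA^*$ lying in $\cC^{\mathbf{\Psi}}$, supplies precisely the routine verification the authors omit. No gaps.
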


\begin{remarks}\label{r:cexc}
 (a) The main theorem in \cite{GoughRatiuSmolyanov} states that, for a dynamical
quantum (stochastic) system $\mathbf{\Psi}$ under two additional constraints, namely, that the 
semigroup is (operator) norm continuous and that there exists a stationary strictly positive density 
operator, that is,
there exists $\rho\in\cB_1(\cH)^+$ that is strictly positive and such that $\Psi_t(\rho)=\rho$ for
all real $t\geq 0$, then $\cC^{\mathbf{\Psi}^\sharp}=\cB(\cH)^{\mathbf{\Psi}^\sharp}$ is a von
Neumann algebra. This theorem remains true under the general assumption 
that the semigroup $\mathbf{\Psi}$ is strongly continuous: we use Theorem~\ref{t:faqs} while
the existence of a stationary strictly positive density operator $\rho$
implies the existence of a normal faithful stationary state $\omega(T)=\tr(\rho T)$, $T\in\cB(\cH)$,
and then Theorem~2.3 in \cite{AriasGheondeaGudder}.

(b) In case the dynamical quantum system $\mathbf{\Psi}$ is (operator) norm continuous,
the infinitesimal generator $\psi$ is bounded and,
by a result of G.~Lindblad \cite{Lindblad} (and, in the finite dimensional case, of V.~Gorini, 
A.~Kossakowski, and E.C.G.~Sudarshan \cite{GoriniKossakowskiSudarshan}), it
takes the form
\begin{equation}\label{e:igs}
\psi(S)=\sum_{k=1}^\infty
(L_kSL_k^*-\frac{1}{2} SL_k^*L_k-\frac{1}{2}L_k^*L_kS)+\iac[S,H],\quad 
S\in\cB_1(\cH),
\end{equation}
for a collection of operators $L_k\in\cB(\cH)$, $k=1,2,\dots$, and a selfadjoint operator 
$H\in\cB(\cH)$. It is easy to see that it's adjoint, which is the infinitesimal generator of
the dual quantum Markov semigroup $\{\Psi_t^\sharp\}_{t\geq 0}$, is
\begin{equation}\label{e:igsp}
\psi^\sharp(T)=\sum_{k=1}^\infty (L_k^*TL_k-\frac{1}{2}L_k^*L_kT-\frac{1}{2}TL_k^*L_k)-\iac[T,H],
\quad T\in\cB(\cH).
\end{equation}
Consequently, using \eqref{e:kernel}, it follows that the constants of $\mathbf{\Psi}$ are exactly
the solutions $T\in\cB(\cH)$ of the equation
\begin{equation}\label{e:duaeq}
\sum_{k=1}^\infty (L_k^*TL_k-\frac{1}{2}L_k^*L_kT-\frac{1}{2}TL_k^*L_k)-\iac[T,H]=0,
\end{equation}
which is an operator Riccati equation.

(c) In case the dynamical quantum system $\mathbf{\Psi}$ is (operator) norm continuous, 
hence \eqref{e:igs}
and \eqref{e:igsp} hold, and $\mathbf{\Psi}$ has  a stationary strictly positive density operator, 
it is proven in \cite{GoughRatiuSmolyanov} that the set
$\cC^{\mathbf{\Psi}}_{\mathrm{w}}$ coincides with the commutant of 
$\{H,L_k,L_k^*\mid k=1,2,\ldots\}^\prime$, in particular, it is a von Neumann algebra.
\end{remarks}

Finally, we are in a position to approach the following question: 
to which extent are the latter conditions on
$A^*A$ or $AA^*$, as in Theorem~\ref{t:dqsc}.(ii), and the latter condition on $A^2$ as in 
Corollary~\ref{c:dqsc}, really necessary? Note that a positive answer to 
this question will answer the similar question asked for the more general case of dynamical 
stochastic systems as in Subsection~\ref{ss:sopssm}, see Remark~\ref{r:uta}.(b).

\begin{example}\label{ex:fg}
As in \cite{AriasGheondeaGudder},
let $\FF_2$ denote the free group on two generators $g_1$ and $g_2$, and let
$\ell^2(\FF_2)$ denote the Hilbert space of all square summable functions $f\colon\FF_2\ra\CC$.
In $\ell^2(\FF_2)$ a canonical orthonormal basis is made up by $\{\delta_x\}_{x\in\FF_2}$, where 
$\delta_x(y)=0$ for all $y\in\FF_2$, $y\neq x$, and $\delta_x(x)=1$. 
Since $\FF_2$ is infinitely 
countable it follows that $\ell^2(\FF_2)$ is infinite dimensional and separable. 
Let $U_j\in\cB(\ell_2(\FF_2))$ denote the unitary operators 
$U_j\delta_x=\delta_{g_jx}$, $x\in\FF_2$ and $j=1,2$. 

We consider the linear bounded operator
$\psi\colon\cB_1(\ell^2(\FF_2))\ra \cB_1(\ell^2(\FF_2))$ defined by
\begin{equation} \psi(S)=U_1SU_1^*+U_2SU_2^*-2S,\quad S\in\cB_1(\ell^2(\FF_2)),
\end{equation}
and then let
\begin{equation}
\Psi_t(S)=\exp(t\psi(S)),\quad S\in\cB_1(\ell^2(\FF_2)),\ t\geq 0.
\end{equation}
From \cite{Lindblad},  see Remark~\ref{r:cexc}.(b),
it follows that $\mathbf{\Psi}=\{\Psi_t\}_{t\geq 0}$ is a (operator) norm continuous semigroup
of quantum operations with respect to $\ell^2(\FF_2)$.

Also, let $L(\FF_2)=W^*(U_1,U_2)$ denote the group von Neumann algebra of $\FF_2$. We 
observe, e.g.\ by means of \eqref{e:duaeq}, that the commutant von Neumann algebra 
$L(\FF_2)^\prime$ is included in the set of constants $\cC^{\mathbf{\Psi}}$.
\end{example}

\begin{lemma}\label{l:fg}
Let $\mathbf{\Psi}$ be the dynamical quantum system as in Example~\ref{ex:fg}.
Then, $\cC^{\mathbf{\Psi}}$ is stable under multiplication if and only if it coincides with
$L(\FF_2)^\prime$.
\end{lemma}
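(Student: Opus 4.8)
The plan is to reduce everything to the single, unconditionally valid identity $\cC^{\mathbf{\Psi}}_2=L(\FF_2)'$, after which the lemma follows at once from Theorem~\ref{t:faqs}.(b). First I would record that, since $\mathbf{\Psi}$ is norm continuous with bounded generator $\psi$ as in Example~\ref{ex:fg} (that is, \eqref{e:igs} with $L_1=U_1$, $L_2=U_2$, and $H=0$), the dual generator \eqref{e:igsp} takes the form $\psi^\sharp(T)=U_1^*TU_1+U_2^*TU_2-2T$, using that $U_1,U_2$ are unitary. Hence, by \eqref{e:kernel},
\[
\cC^{\mathbf{\Psi}}=\ker(\psi^\sharp)=\{T\in\cB(\cH)\mid \Phi(T)=T\}=\cB(\cH)^{\Phi},
\]
where $\Phi\colon\cB(\cH)\ra\cB(\cH)$, $\Phi(T)=\frac12(U_1^*TU_1+U_2^*TU_2)$, is a $w^*$-continuous, unital, completely positive map. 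By Theorem~\ref{t:faqs}.(b), $\cC^{\mathbf{\Psi}}$ is stable under multiplication if and only if $\cC^{\mathbf{\Psi}}=\cC^{\mathbf{\Psi}}_2$; granting $\cC^{\mathbf{\Psi}}_2=L(\FF_2)'$, the lemma is then immediate, since the two displayed equivalences combine to give that $\cC^{\mathbf{\Psi}}$ is stable under multiplication exactly when $\cC^{\mathbf{\Psi}}=L(\FF_2)'$.

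It therefore remains to compute $\cC^{\mathbf{\Psi}}_2$, and here I would invoke Theorem~\ref{t:doi}: since $\Phi$ is unital and completely positive, $\cC^{\mathbf{\Psi}}_2=\{T\mid T,T^*T,TT^*\in\cB(\cH)^{\Phi}\}=\cB(\cH)^{\Phi}\cap\cM_\Phi$. The multiplicative domain $\cM_\Phi$ is identified through the equality case of the Schwarz inequality. Using $U_jU_j^*=U_j^*U_j=I$ one has $U_j^*T^*TU_j=(U_j^*T^*U_j)(U_j^*TU_j)$, so a direct expansion gives
\[
\Phi(T^*T)-\Phi(T)^*\Phi(T)=\tfrac14\,(U_1^*TU_1-U_2^*TU_2)^*(U_1^*TU_1-U_2^*TU_2),
\]
which vanishes exactly when $U_1^*TU_1=U_2^*TU_2$; the companion condition coming from $TT^*$ reduces to this same equation. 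Thus $\cM_\Phi=\{T\mid U_1^*TU_1=U_2^*TU_2\}$.

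Intersecting with the fixed points completes the computation: if $U_1^*TU_1=U_2^*TU_2$ and $\Phi(T)=\frac12(U_1^*TU_1+U_2^*TU_2)=T$, then necessarily $U_1^*TU_1=U_2^*TU_2=T$, i.e. $TU_1=U_1T$ and $TU_2=U_2T$, so $T\in\{U_1,U_2\}'=L(\FF_2)'$. The reverse inclusion $L(\FF_2)'\subseteq\cC^{\mathbf{\Psi}}_2$ is routine, as any $T$ commuting with $U_1,U_2$ satisfies $U_j^*TU_j=T$ and $L(\FF_2)'$ is an algebra, so $T,T^*T,TT^*$ are all fixed by $\Phi$. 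Hence $\cC^{\mathbf{\Psi}}_2=L(\FF_2)'$, as needed. The only genuinely delicate point is the algebraic identity for $\Phi(T^*T)-\Phi(T)^*\Phi(T)$, which hinges on the unitarity of $U_1,U_2$ to collapse $U_j^*T^*TU_j$ into a product; everything else is bookkeeping on top of the already-established Theorems~\ref{t:doi} and~\ref{t:faqs}.
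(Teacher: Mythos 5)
Your proof is correct, and it takes a genuinely different route from the paper's. Both arguments start from Theorem~\ref{t:faqs}.(b), but the paper then works with orthogonal projections: assuming $\cC^{\mathbf{\Psi}}$ is stable under multiplication, it is a von Neumann algebra, and for each projection $E\in\cC^{\mathbf{\Psi}}$ the fixed-point equation $U_1^*EU_1+U_2^*EU_2=2E$ yields $\|EU_1h\|^2+\|EU_2h\|^2=2\|h\|^2$ for $h\in\ran(E)$, forcing each $U_j$ to leave $\ran(E)$ and $\ran(I-E)$ invariant; since a von Neumann algebra is generated by its projections, $\cC^{\mathbf{\Psi}}\subseteq\{U_1,U_1^*,U_2,U_2^*\}'=L(\FF_2)'$. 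You instead compute the multiplicative domain of the Markov map $\Phi(T)=\frac{1}{2}(U_1^*TU_1+U_2^*TU_2)$ exactly, via the Schwarz-defect identity $\Phi(T^*T)-\Phi(T)^*\Phi(T)=\frac{1}{4}\,(U_1^*TU_1-U_2^*TU_2)^*(U_1^*TU_1-U_2^*TU_2)$ (which I have checked, together with its $TT^*$ companion), and combine it with Theorem~\ref{t:doi} to obtain the unconditional identity $\cC^{\mathbf{\Psi}}_2=\cB(\cH)^{\Phi}\cap\cM_\Phi=L(\FF_2)'$, after which the lemma is immediate. Your route buys a sharper statement: it identifies the von Neumann algebra of Theorem~\ref{t:faqs}.(a) explicitly as $L(\FF_2)'$ with no hypothesis on $\cC^{\mathbf{\Psi}}$, whereas the paper's projection argument only gives the inclusion $\cC^{\mathbf{\Psi}}\subseteq L(\FF_2)'$ under the standing assumption of stability under multiplication; the paper's argument, in exchange, avoids any exact algebraic computation and relies only on a soft norm estimate. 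The easy converse direction (that $L(\FF_2)'$, being a von Neumann algebra contained in $\cC^{\mathbf{\Psi}}$, is stable under multiplication) is handled essentially identically in both.
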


\begin{proof} It is sufficient to prove that, if $\cC^{\mathbf{\Psi}}$ is stable under multiplication then
it coincides with $L(\FF_2)^\prime$.
To see this, assume that $\cC^{\mathbf{\Psi}}$ is stable under multiplication
hence, by Theorem~\ref{t:faqs}.(b), it is a von Neumann algebra. By 
Remark~\ref{r:cexc}.(b), it follows that for any orthogonal projection $E\in\cC^{\mathbf{\Psi}}$
equation \eqref{e:duaeq} holds which, in our special case, is
\begin{equation} U_1^*EU_1+U_2^*EU_2=2E.
\end{equation}
Consequently, for each vector $h\in\ell^2(\FF_2)$ that lies in the range of $E$ we have
\begin{equation*}
\|EU_1h\|^2+\|EU_2h\|^2=\langle U_1^*EU_1h,h\rangle+\langle U_2^*EU_2h,h\rangle=
2\langle Eh,h\rangle =2\|h\|^2,
\end{equation*}
from which, after a moment of thought, we see that $U_jh$ should lie in the range of 
$E$ for $j=1,2$. We have shown that $U_j$ leaves the range of $E$ invariant, $j=1,2$. 
Since the same is true for the range of $I-E$, it follows that $U_j$ commutes with all orthogonal
projections in the von Neumann algebra $\cC^{\mathbf{\Psi}}$, hence 
$\cC^{\mathbf{\Psi}}\subseteq \{U_1,U_1^*,U_2,U_2^*\}^\prime=L(\FF_2)^\prime$. The converse
inclusion was observed at the end of Example~\ref{ex:fg}.
\end{proof}

During the proof of the next theorem we use terminology as in Subsection~\ref{ss:etvna}.

\begin{theorem}\label{t:dqscc}
On any infinite dimensional separable Hilbert space $\cH$, there exists a (operator) 
norm continuous
semigroup of quantum operations $\mathbf{\Phi}=\{\Phi_t\}_{t\geq 0}$ with respect to $\cH$, for
which:
\begin{itemize}
\item[(a)] The set of constants $\cC^{\mathbf{\Phi}}$ is not a von Neumann algebra, equivalently,
it is not stable under multiplication.
\item[(b)] There exists $A\in\cB(\cH)^+$ which is a constant of $\mathbf{\Phi}$ but $A^2$ is 
not.
\end{itemize}
\end{theorem}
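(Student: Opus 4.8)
The plan is to exhibit the required semigroup on the concrete Hilbert space $\ell^2(\FF_2)$ first, using the dynamical quantum system $\mathbf{\Psi}$ of Example~\ref{ex:fg}, and then to transport it to an arbitrary infinite dimensional separable $\cH$ by a unitary equivalence. The heart of the matter is to show that, for this particular $\mathbf{\Psi}$, the set of constants $\cC^{\mathbf{\Psi}}$ fails to be stable under multiplication; everything else is a formal consequence of the structural results already established.

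First I would argue part (a) by contradiction. Suppose $\cC^{\mathbf{\Psi}}$ were stable under multiplication. Since $\cC^{\mathbf{\Psi}}=\cB(\cH)^{\mathbf{\Psi}^\sharp}$ is the set of joint fixed points of the dual semigroup $\mathbf{\Psi}^\sharp$, which is a $w^*$-continuous semigroup of $w^*$-continuous, unital, completely positive maps on $\cB(\cH)$, Theorem~\ref{t:inj}.(c) applies and forces $\cC^{\mathbf{\Psi}}$ to be an injective von Neumann algebra. On the other hand, Lemma~\ref{l:fg} identifies $\cC^{\mathbf{\Psi}}$ with the commutant $L(\FF_2)^\prime$ exactly when it is stable under multiplication. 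Thus $L(\FF_2)^\prime$ would be injective, and since injectivity is preserved under passage to the commutant, $L(\FF_2)$ itself would be injective. But $\FF_2$ is a non-amenable group, so $L(\FF_2)$ is not injective (equivalently, not hyperfinite), and this contradiction establishes that $\cC^{\mathbf{\Psi}}$ is not stable under multiplication, equivalently, by Theorem~\ref{t:faqs}.(b), not a von Neumann algebra. This is the decisive step, and the only genuinely external input it requires; the rest is bookkeeping.

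For part (b) I would exploit the failure of multiplicativity quantitatively. By Theorem~\ref{t:faqs}.(b), stability under multiplication is equivalent to $\cC^{\mathbf{\Psi}}=\cC^{\mathbf{\Psi}}_2$, so its failure gives a strict inclusion $\cC^{\mathbf{\Psi}}_2\subsetneq\cC^{\mathbf{\Psi}}$. Both $\cC^{\mathbf{\Psi}}$, as an operator system, and $\cC^{\mathbf{\Psi}}_2$, as a von Neumann algebra, are linearly generated by their positive elements; hence they cannot share the same positive cone, for otherwise their linear spans, namely themselves, would coincide. Therefore there is a positive operator $A\in\cC^{\mathbf{\Psi}}$ with $A\notin\cC^{\mathbf{\Psi}}_2$. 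Since $A\geq 0$ we have $A^*A=AA^*=A^2$, and $A$ is itself a constant, so the only way membership in $\cC^{\mathbf{\Psi}}_2$ can fail is through $A^2$: that is, $A^2$ is not a constant of $\mathbf{\Psi}$, which is precisely assertion (b).

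Finally, to obtain the statement on an arbitrary infinite dimensional separable Hilbert space $\cH$, I would fix a unitary $W\colon\ell^2(\FF_2)\ra\cH$ and define $\Phi_t(T)=W\Psi_t(W^*TW)W^*$ for $T\in\cB_1(\cH)$. This is again a norm continuous semigroup of quantum operations, whose set of constants is $W\cC^{\mathbf{\Psi}}W^*$, so the failure of stability under multiplication in (a) and the existence of the positive constant with non-constant square in (b) transfer verbatim under conjugation by $W$. I expect the main obstacle to be the non-injectivity of $L(\FF_2)$ invoked in (a): this is the non-amenability of $\FF_2$ funnelled through Connes' characterisation of injective von Neumann algebras, and it is exactly what makes the free group the essential ingredient rather than any routine operator computation.
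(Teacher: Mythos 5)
Your proposal is correct and follows essentially the same route as the paper's proof: the semigroup of Example~\ref{ex:fg}, the identification of stability under multiplication with $\cC^{\mathbf{\Psi}}=L(\FF_2)^\prime$ via Lemma~\ref{l:fg}, injectivity via Theorem~\ref{t:inj} combined with Hakeda--Tomiyama and the non-injectivity of $L(\FF_2)$ (the paper cites Schwartz directly where you invoke non-amenability and Connes' theorem, but this is the same external input), the extraction of a positive constant with non-constant square from the positive cone of the operator system, and the unitary transport to a general separable $\cH$. The only cosmetic difference is that you derive (a) first and deduce (b) from the strict inclusion $\cC^{\mathbf{\Psi}}_2\subsetneq\cC^{\mathbf{\Psi}}$, whereas the paper proves (b) first using $\cI(\mathbf{\Psi}^\sharp)$ and obtains (a) from Theorem~\ref{t:faqs}; both orderings rest on the same facts.
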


\begin{proof} We first show that the (operator) 
norm continuous dynamical quantum system $\mathbf{\Psi}$ 
as in Example~\ref{ex:fg} has all the required properties.  To this end, it is sufficient to prove 
assertion (b), then assertion (a) will follow from Theorem~\ref{t:faqs}.
By a classical result of J.~Hakeda and J.~Tomiyama 
\cite{HakedaTomiyama}, a von Neumann algebra $\cM$ is injective if and only if its commutant 
$\cM^\prime$ is injective. By another classical result of J.T.~Schwartz \cite{Schwartz}, see also
J.~Tomiyama \cite{Tomiyama2}, the
von Neumann algebra $L(\FF_2)$ is not injective hence, its commutant $L(\FF_2)^\prime$
is not injective either. 
Consequently, by Lemma~\ref{l:fg} and Theorem~\ref{t:inj}, 
the set of joint
fixed points $\cB(\ell^2(\FF_2))^{\mathbf{\Psi^\sharp}}=\cC^{\mathbf{\Psi}}$ is strictly larger
than the joint bimodule set $\cI(\mathbf{\Psi^\sharp})$. Since 
$\cB(\ell^2(\FF_2))^{\mathbf{\Psi^\sharp}}$ is an operator system, hence linearly generated by
its positive cone, there exits 
$A\in\cB(\ell^2(\FF_2))^{\mathbf{\Psi^\sharp}}
\setminus\cI(\mathbf{\Phi})$ with $A\geq 0$.
In view of Theorem~\ref{t:choi}, this implies $A^2\not\in\cB(\ell^2(\FF_2))^{\mathbf{\Psi^\sharp}}$.

In general, if $\cH$ is an infinite dimensional separable Hilbert space, then there exists a unitary
operator $U\colon\ell^2(\FF_2)\ra\cH$ and let $\Phi_t=U^*\Psi_tU$, for all real $t\geq 0$. Then
$\mathbf{\Phi}=\{\Phi_t\}_{t\geq 0}$ has all the required properties.
\end{proof}

Theorem~\ref{t:dqscc} answers, in the negative, 
also the question on whether the condition that $A^2$ is a joint 
fixed point, as in Theorem~\ref{t:bartoszek2}.(i), 
is a consequence of the condition that $A$ is a joint fixed point.

\section*{Appendix: A Proof of Theorem~\ref{t:choi}}\label{s:aa}
%
\setcounter{equation}{0}
\renewcommand\theequation{a.\arabic{equation}} 

Let $\Phi\colon\cA\ra\cB$ be a contractive completely positive map hence, by the Russ-Dye 
Theorem, $\|\Phi(e)\|=\|\Phi\|\leq 1$. 
By the Gelfand--Naimark Theorem, $\cB$
can be faithfully represented as a $C^*$-subalgebra of $\cB(\cH)$, for some Hilbert space $\cH$, 
hence $\Psi$ can be considered as a completely positive map 
$\Phi\colon \cA\ra\cB(\cH)$. Then, by
Stinespring Theorem \cite{Stinespring}, 
there exists a triple $(\cK;\pi;V)$, where $\cK$ is a Hilbert space, 
$\pi\colon\cA\ra \cB(\cK)$ is a $*$-representation, $V\colon \cH\ra\cK$ is a bounded linear operator, 
$\phi(a)=V^*\pi(a)V$ for all $a\in \cA$, and such that $\pi(\cA)\cH$ is a total subset of $\cK$. We 
briefly recall the original construction \cite{Stinespring}. $\cK$
is defined as the factorization and
completion to a Hilbert space of the vector space $\cA\otimes\cH$ with respect to the inner product 
$\langle\cdot,\cdot\rangle_\cK$, defined on elementary tensors by
$ \langle a\otimes h,b\otimes k\rangle_\cK=\langle \Phi(b^*a)h,k\rangle_\cH$, for all
$a,b\in\cA,\ h,k\in\cH$, and then extended by linearity. The representation $\pi$ is defined on 
elementary tensors by $\pi(a)(b\otimes h)=ab\otimes h$, for all $a,b\in\cA$, $h\in\cH$,
and then extended by linearity, while the operator $V$ is defined by
$Vh=\Phi(e)\otimes h$, $h\in\cH$,
and, since $\|\Phi(e)\|\leq 1$ then $\|V\|\leq 1$, equivalent to $I_\cH-V^*V\geq 0$ and to
$I_\cK-VV^*\geq0$.

(1) \emph{The Schwarz Inequality}. For any $a\in\cA$
we have
\begin{align*}
\Phi(a^*a)-\Phi(a)^*\Phi(a) & = V^*\pi(a^*a)V-V^*\pi(a)^*VV^*\pi(a)V \\
& =V^*\pi(a)^*(I_\cK-VV^*)\pi(a)V\geq 0,
\end{align*}
hence,
$\Phi(a)^*\Phi(a)\leq \Phi(a^*a)$.

(2) \emph{The Multiplicativity Property}. For arbitrary but fixed $a\in\cA$, we have
\begin{align*}
\|\Phi(a)^*\Phi(a)-\Phi(a^*a)\| & =\|V^*\pi(a)^*(I_\cK-VV^*)\pi(a)V\| \\
& = \|V^*\pi(a)^*(I_\cK-VV^*)^{1/2}(I_\cK-VV^*)^{1/2}\pi(a)V\| \\
& = \|(I_\cK-VV^*)^{1/2}\pi(a)V\|^2 \\
\end{align*}
hence, 
$\Phi(a)^*\Phi(a)=\Phi(a^*a)$, if and only if $(I_\cK-VV^*)^{1/2}\pi(a)V=0$ and, taking account
that, for any $b\in\cA$,
\begin{equation*}\label{e:fani}
\Phi(ba)-\Phi(b)\Phi(a)=V^*\pi(b)(I_\cK-VV^*)^{1/2}(I_\cK-VV^*)^{1/2}\pi(a)V,
\end{equation*}
it follows that 
\begin{equation*}\label{e:fias}
\Phi(a)^*\Phi(a)=\Phi(a^*a)\mbox{ if and only if }
\Phi(b)\Phi(a)=\Phi(ba),\mbox{ for all }b\in\cA.\end{equation*}
In a similar fashion the latter equivalence follows.

(3) \emph{$\cM_\Phi$ is the largest $C^*$-subalgebra $\cC$ of $\cA$ such that 
$\Phi|_\cC\colon\cC\ra \cB$ is a $*$-homomorphism}. Clearly, $\cM_\Phi$ is stable under
taking adjoints. From (2) it follows that $\cM_\Phi$ is stable under multiplication. 
Let $a,b\in\cM_\Phi$. Then
\begin{align*} \Phi((a+b)^*(a+b)) &  = \Phi(a^*a+a^*b+b^*a+b^*b)  = \Phi(a^*a)+\Phi(a^*b)+\Phi(b^*a)+\Phi(b^*b) \\
& = \Phi(a)^*\Phi(a)+\Phi(a)^*\Phi(b)+\Phi(b)^*\Phi(a)+\Phi(b)^*\Phi(b) = \Phi(a+b)^*\Phi(a+b).
\end{align*}
Similarly we prove that $\Phi((a+b)(a+b)^*)=\Phi(a+b)\Phi(a+b)^*$,
hence $\cM_\Phi$ is stable under addition as well and, consequently, a $C^*$-subalgebra of $\cA$.

Clearly, $\Phi|_{\cM_\Phi}$ is a $*$-homomorphism. Let $\cC$ be an arbitrary $C^*$-subalgebra of 
$\cA$ such that $\Phi|_\cC$ is a $*$-homomorphism. Then, for any $a\in\cC$ we have 
$\Phi(a^*a)=\Phi(a)^*\Phi(a)$ and $\Phi(aa^*)=\Phi(a)\Phi(a)^*$, hence $a\in\cM_\Phi$.

\end{document}